\newcommand\longmapsfrom{\mathrel{\reflectbox{\ensuremath{\longmapsto}}}}
\def\articleTitle{Point electrode problems in piecewise smooth plane domains}
\def\shortArticleTitle{Point electrodes in piecewise smooth domains}
\newtheorem{theorem}{Theorem}[section]
\newtheorem{corollary}[theorem]{Corollary}
\newtheorem{lemma}[theorem]{Lemma}
\newtheorem{definition}{Definition}[section]
\numberwithin{equation}{section} %
\theoremstyle{remark}
\newtheorem{remark}[theorem]{Remark}%
\newtheorem{example}[theorem]{Example}
\DeclareMathOperator{\supp}{{\rm supp}}	%
\def\R{\mathbb R}		%
\def\C{\mathbb C}		%
\def\d{\,\mathrm d}		%
\def\cont{\mathscr C}		%
\def\bg{{\mathbbm 1}}		%
\def\csubset{\subset\subset}	%
\def\wto{\xrightarrow{w*}}	%
\newcommand{\mb}[1]{{\boldsymbol{#1}}}
\begin{document}

\title[\shortArticleTitle]{\articleTitle}

\author{Otto Seiskari}
\address{Aalto University, Department of Mathematics and Systems Analysis, FI-00076 Aalto, Finland}
\email{otto.seiskari@aalto.fi}

\keywords{Neumann-to-Dirichlet map, conductivity equation, Calder\'on problem, point measurements, conformal map, piecewise smooth domain, (bi)sweep data, partial data}

\begin{abstract}
Conductivity equation is studied in piecewise smooth plane domains and with measure-valued current patterns (Neumann boundary values). 
This allows one to extend the recently introduced concept of \emph{bisweep data} to piecewise smooth domains, which yields a new partial data result for Calderón inverse conductivity problem.
It is also shown that bisweep data are (up to a constant scaling factor) the Schwartz kernel of the relative Neumann-to-Dirichlet map.
A numerical method for reconstructing the supports of inclusions from discrete bisweep data is also presented.
\end{abstract}

\maketitle

\section{Introduction}

In \emph{electrical impedance tomography} (EIT) \cite{uhlmannreview}\cite{borceareview}, the conductivity equation
\begin{equation}
\label{eq:conductivity:equation}
	\nabla \cdot (\sigma \nabla u) = 0 \ \mbox{ in }D, \qquad
	\frac{\partial u}{\partial \nu} = f \ \mbox{ on }\partial D,
\end{equation} 
where $D$ is a bounded domain in $\R^2$ or $\R^3$, is studied.
A fundamental question, the so-called Calderón inverse conductivity problem, is whether a measurable conductivity $\sigma$ can be determined from the Neumann-to-Dirichlet map (current-to-voltage map) $\Lambda_\sigma : f \mapsto u|_{\partial D}$, or equivalently, its inverse, the Dirichlet-to-Neumann map $\Lambda_\sigma^{-1}$. 
In the plane, $D \subset \R^2$, a positive answer was given by Astala and Päivärinta \cite{astalapaivarinta2006}
under the assumption that $D$ has a connected complement and $\sigma$ is essentially bounded away from zero and infinity (see equation \ref{eq:sigma:bounds} below). %
This followed a number of uniqueness proofs in case of more regular conductivities, e.g., \cite{Nachman96}\cite{brown1997uniqueness}. The problem has also been extensively studied in dimensions $n \geq 3$, see \cite{uhlmannreview} and the references therein.

The conductivity equation is typically studied with boundary currents $f \in H^{-1/2}(\partial D)$, but from a certain point of view it is also natural to consider more singular distributional values, in particular, various point distributions on the boundary of the object.
The approach is inspired by the concept of \emph{point electrodes} \cite{pem},
which can be used as an alternative approximate physics model in place of the so-called \emph{continuum forward model}, the basis of many theoretical results and numerical methods in EIT. Another commonly used framework is the \emph{complete electrode model} (CEM),
which is highly realistic but its theoretical properties remain less studied \cite{eit99}\cite{borceareview}. %

We study measurements of the form
\begin{equation}
\label{eq:def:w:angle:brackets}
	w(x,y,p,q) = \langle \delta_x - \delta_y, (\Lambda_\sigma - \Lambda_\bg)(\delta_p - \delta_q) \rangle,
\end{equation}
where $\Lambda_\bg$ is the Neumann-to-Dirichlet map for Laplace equation in $D$. In domains with a smooth boundary $\partial D$, this type of distributional measurements can be tangled using the techniques from \cite{lionsmagenes} (cf. \cite{convexbss}).
This paper presents an alternative approach, based on \cite{kral1980}, that is also applicable in piecewise smooth domains.
It appears that the point measurements $w$ are well-defined provided that
\begin{subequations}
\label{eq:feasible:sigma}
\begin{align}
\label{eq:sigma:bounds}
	&C \geq \sigma \geq c > 0 \mbox{ a.e. in }D, \\ 
\label{eq:sigma:boundary:homogeneity}
	&\supp(1-\sigma) \csubset D.
\end{align}
\end{subequations}
Thus, in addition to being bounded, $\sigma$ is assumed to be homogeneous near the boundary $\partial D$.

In dimension $n=2$, there are recent results considering Calderón problem with partial data; is it possible to recover $\sigma$ from partial knowledge of $\Lambda_\sigma$ (or $\Lambda_\sigma^{-1}$)? It was shown by Imanuvilov, Uhlmann, and Yamamoto \cite{imanuvilov2010} that if $\sigma$ is, \emph{a priori}, known to have smoothness of class $\cont^{4,\alpha}, \alpha > 0$ (this assumption is relaxed in \cite{imanuvilov2012}), then it can be recovered from the knowledge of $\{ (v, (\Lambda_\sigma^{-1} v)|_E) \,:\, \supp v \subset E \}$ for any (relatively) open subset $E$ of a piecewise smooth boundary $\partial D$. A similar result for the Neumann-to-Dirichlet map $\Lambda_\sigma$ and conductivities $\sigma \in W^{3,p}(D)$, $p > 2$ in smooth domains $D$ is formulated in \cite{imanuvilovpreprint}.

In this paper, we show that if the global smoothness assumptions in \cite{imanuvilov2010}\cite{imanuvilov2012}\cite{imanuvilovpreprint} are exchanged for another, arguably unwanted, restriction \eqref{eq:sigma:boundary:homogeneity}, a measurable conductivity $\sigma$ is uniquely determined by various types of partial data.
In particular, a result in \cite{bisweep} is generalized by showing that $\sigma$ is recovered from the knowledge of $w(x,y,p,q)$, $x,y \in \Gamma$, $p,q \in \Xi$ for arbitrary countably infinite $\Gamma, \Xi \subset \partial D$. In addition, $\sigma$ can be recovered from measurements of the type $(\Lambda_\sigma f)|_{W}$, $\supp f \subset V$, where $V, W \neq \emptyset$ are arbitrary, possibly disjoint, relatively open subsets of $\partial D$.

Many numerical reconstruction methods in EIT are easiest to apply if $D = B$ is the unit disk. Together with numerical conformal mappings, the concept of \emph{bisweep data} \cite{bisweep}, $\varsigma_\sigma(x,y) := w(x,y,x,y)$, offers means of using any such reconstruction algorithm in other piecewise smooth, simply connected plane domains.
In Section~\ref{sec:rec:numerics}, we formulate a method for applying unit-disk-based reconstruction algorithms in other domains, and demonstrate it with the factorization method \cite{bruhl2001factorization}.

The main results of this article are outlined in the next section and proven in Section~\ref{sec:proofs}.
For completeness, auxiliary theorems about conformal maps are given in an appendix.

\section{Setting and main results}

\begin{definition}[Piecewise smooth domain]
Let $D \subset \R^2$ be a bounded, simply connected Lipschitz domain %
with a simple boundary $\partial D$ consisting of a finite number $m$ of $\cont^{1,\alpha}$ smooth %
arcs (for some $0 < \alpha \leq 1$). We call such $D$ a piecewise $\cont^{1,\alpha}$ smooth plane domain.
\end{definition}

Throughout this paper, the symbol $D$ is used to denote an arbitrary piecewise $\cont^{1,\alpha}$ smooth domain, which is also identified with the corresponding subset of $\C$ when necessary.
Notice that, as a Lipschitz domain, $D$ cannot have any cusps, and the surface (Lebesgue) measure $s$ is well-defined. The symbol $\nu$ is used for the outward unit normal vector of $\partial D$, which is defined $s$-almost everywhere (cf., e.g., \cite[§1.5]{grisvard}). %
Sobolev and Lebesgue spaces in $D$ (resp., on $\partial D$) are defined with respect to the Lebesgue measure in $\R^2$ (resp., $s$).

Let $\sigma \in L^\infty(D)$ satisfy \eqref{eq:feasible:sigma}, which is also assumed in what follows. For any $f \in H^{-1/2}_\diamond(\partial D)$, let $u^f \in H^1(D)/\R$ denote the unique\footnote{%
	Unique up to an addition of a constant, that is, in $H^1(D)/\R := \{ \{ u + C \,:\, C \in \R \} \,:\,u \in H^1(D) \}$ (cf. \cite[§1.1.7]{necas}\cite[§1.1.13]{mazja1985}).
	The solution exists if and only if the Neumann boundary value has zero mean, that is, $f \in H^{-1/2}_\diamond(\partial D) := \{ g \in H^{-1/2}(\partial D) \,:\, \langle g, 1 \rangle = 0 \}$, which is the dual space of $H^{1/2}(\partial D)/\R$. The brackets denote dual evaluation.
	For the definition of $H^{1/2}(\partial D)$, see, e.g., \cite{grisvard}\cite{necas}, where this space is denoted $W^{1/2}_2(\partial D)$.
} 
solution to the weak form of \eqref{eq:conductivity:equation}  \cite{astalapaivarinta2006}\cite{borceareview}:
\begin{equation}
\label{eq:def:u:H1}
	\int_D \sigma \nabla u^f \cdot \nabla v \d x = \langle f, \gamma v \rangle \qquad \mbox{ for all }v \in H^1(D)/\R,
\end{equation} 
where $\gamma : H^1(D)/\R \to H^{1/2}(\partial D)/\R$ is the continuous Dirichlet trace operator (see \cite[§2.5.4]{necas}).
\emph{Neumann-to-Dirichlet map} is the bounded and self-dual %
operator $\Lambda_\sigma : H^{-1/2}_\diamond(\partial D) \rightarrow H^{1/2}(\partial D)/\R$ defined by $f \mapsto \gamma u^f$. 
We also study the background Neumann-to-Dirichlet map, $\Lambda_\bg : H^{-1/2}_\diamond(\partial D) \rightarrow H^{1/2}(\partial D)/\R$, $f \mapsto \gamma u^f_\bg$, where $u^f_\bg \in H^1(D)/\R$ is the unique solution to \eqref{eq:def:u:H1} with $\sigma \equiv 1$, i.e., Laplace equation with Neumann boundary value $f$.

By $\cont'(\partial D)$, we denote the Banach space of all finite, real, signed Borel measures supported in $\partial D$, equipped with the total variation norm (cf. \cite{kral1980}). 
It is the dual space of $\cont(\partial D)$ \cite[Chapter IV, Theorem 6.2]{dunfordschwartz}. %
The subspace of measures $\mu \in \cont'(\partial D)$ such that $\mu(\partial D) = 0$ is denoted $\cont_\diamond'(\partial D)$ and it can be identified with the dual of $\cont(\partial D)/\R$, the space of continuous functions on $\partial D$ defined up to a constant, which is complete %
and can be supplied with the norm (cf. \cite[§1.1.7]{necas})
\[
	\| f \|_{\cont(\partial D)/\R}
	= \inf_{g \in f} \| g \|_{\cont(\partial D)}
	= \inf_{C \in \R}\sup_{x \in \partial D} |f(x) + C|.
\]

The functions %
in $\cont_\diamond(\partial D) = \{ f \in \cont(\partial D) : \int_{\partial D} f \d s = 0 \}$ are dense in the \emph{weak* topology} 
of $\cont'_\diamond(\partial D)$, that is, for any $\mu \in \cont'_\diamond(\partial D)$, there exists a sequence $\{ \mu_j \} \subset \cont_\diamond(\partial D)$ 
such that (cf. Lemma~\ref{lemma:approximation})
\[
	\int_{\partial D} f \mu_j \d s \rightarrow \int_{\partial D} f \d \mu \qquad \mbox{ for all }f \in \cont(\partial D)/\R,
\]
which is denoted $\mu_j \wto \mu$. 

Let $\mu \in \cont'_\diamond(\partial D)$. As a generalization\footnote{%
	If $\mu \in H^{-1/2}_\diamond(\partial D) \cap \cont_\diamond'(\partial D)$,
	that is,  $\mu \in \cont'_\diamond(\partial D)$ and $|\int_{\partial D} \varphi \d \mu| \leq C \|\varphi\|_{H^{1/2}(\partial D)/\R}$, in which case $\mu$ can be identified with the continuous extension of $\varphi \mapsto \int_{\partial D} \varphi \d \mu$ to $H^{-1/2}_\diamond(\partial D)$, the definitions \eqref{eq:def:u:H1} and \eqref{eq:def:umu} coincide.}
of the conductivity equation for measure-valued boundary currents, the weak Neumann problem %
of finding a function $u^\mu \in W^{1,1}(D)/\R = \{ \{ u + C \,:\, C \in \R \}\,:\, u, |\nabla u| \in L^1(D)  \}$ such that
\begin{equation}
\label{eq:def:umu}
	\int_D \sigma \nabla u^\mu \cdot \nabla \varphi \d x = \int_{\partial D} \varphi \d \mu \qquad \mbox{for all }\varphi \in \cont^\infty(\overline D)
\end{equation} 
is studied.
The corresponding background problem is
\begin{equation}
\label{eq:def:u1mu}
	\int_D \nabla u_\bg^\mu \cdot \nabla \varphi \d x = \int_{\partial D} \varphi \d \mu \qquad \mbox{for all }\varphi \in \cont^\infty(\overline D),
\end{equation}
and, as shown in Lemma~\ref{lemma:u1:Neumann:function}, one representative of the background solution is given by
\begin{equation}
\label{eq:u1:Neumann:function}
	u^\mu_\bg(x) = \int_{\partial D} N(\Phi(x),\Phi(y)) \d \mu(y),
\end{equation}
where $N$ is the Neumann--Green function of the unit disk $B$: %
\begin{equation}
\label{eq:unit:disk:Neumann:function} 
	N(x,y) = %
	\begin{cases}
		-\frac1{2\pi} \left( \log|y-x| + \log\left| \frac{y}{|y|} - |y|x \right| \right) &\mbox{ if }y \neq 0, \\
		-\frac1{2\pi} \log|x|	&\mbox{ if }y = 0
	\end{cases}
\end{equation}
and $\Phi : D \to B$ is a conformal map. 

The main results of this paper are connected to the following concept.

\begin{theorem}\label{thm:NtoD:continuity}
There exists a unique, linear, and bounded operator, $\Lambda_\sigma - \Lambda_\bg : \cont'_\diamond(\partial D) \to \cont(\partial D)/\R$, called the \emph{relative Neumann-to-Dirichlet map}, such that for all $\mu, \eta \in \cont'_\diamond(\partial D)$,
\begin{equation}
\label{eq:def:relative:NtoD:and:Q}
	\int_{\partial D} [(\Lambda_\sigma - \Lambda_\bg) \mu] \d \eta = Q_\sigma(\mu,\eta) := \int_{D} (1-\sigma) \nabla u^\mu \cdot \nabla u^\eta_\bg \d x.
\end{equation}
The map is also self-dual %
in the sense that the bilinear form $Q_\sigma : \cont'_\diamond(\partial D) \times \cont'_\diamond(\partial D) \to \R$ is symmetric: $Q_\sigma(\mu,\eta) = Q_\sigma(\eta,\mu)$.
Furthermore,
\begin{equation}
\label{eq:NtoD:weak:convergence}
	Q_\sigma(\mu,\eta)
	= \lim_{j \to \infty} \langle \eta_j, (\Lambda_\sigma - \Lambda_\bg) \mu_j \rangle
	,
\end{equation} 
where $\{ \mu_j \}, \{ \eta_j \} \subset \cont_\diamond(\partial D)$ are any sequences such that $\mu_j \wto \mu$ and $\eta_j \wto \eta$.
\end{theorem}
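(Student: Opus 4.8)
My plan is to settle the \emph{smooth} case by a direct energy computation and then transfer everything to measures through a decomposition that confines all boundary-singular behaviour to the explicit background potential $u^\mu_\bg$ of \eqref{eq:u1:Neumann:function}. For $\mu,\eta\in\cont_\diamond(\partial D)$ I would test the weak forms \eqref{eq:def:umu} and \eqref{eq:def:u1mu} against $u^\eta_\bg$ and $u^\mu$ and rearrange to obtain the Alessandrini-type identity $\langle\eta,(\Lambda_\sigma-\Lambda_\bg)\mu\rangle=\int_D(1-\sigma)\nabla u^\mu\cdot\nabla u^\eta_\bg\d x=Q_\sigma(\mu,\eta)$, whence symmetry for smooth data follows from the self-duality of $\Lambda_\sigma$ and $\Lambda_\bg$. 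The key structural point, valid for every $\mu\in\cont'_\diamond(\partial D)$, is that $w^\mu:=u^\mu-u^\mu_\bg$ is far more regular than either summand: subtracting \eqref{eq:def:u1mu} from \eqref{eq:def:umu} yields
\[
\int_D\sigma\nabla w^\mu\cdot\nabla\varphi\d x=\int_K(1-\sigma)\nabla u^\mu_\bg\cdot\nabla\varphi\d x\qquad(\varphi\in H^1(D)),
\]
with $K:=\supp(1-\sigma)\csubset D$. Since $\nabla u^\mu_\bg$ is smooth and bounded on the interior set $K$ — differentiate \eqref{eq:u1:Neumann:function} under the integral, using that $\Phi(K)$ is a compact subset of the open disk — the right-hand side is a bounded functional on $H^1(D)/\R$, so Lax--Milgram produces a unique $w^\mu\in H^1(D)/\R$ with $\|w^\mu\|_{H^1(D)/\R}\le C\|\mu\|_{\cont'(\partial D)}$, and by uniqueness for \eqref{eq:def:umu} one has $u^\mu=u^\mu_\bg+w^\mu$.

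I would then \emph{define} $(\Lambda_\sigma-\Lambda_\bg)\mu:=\gamma w^\mu$. Because the source above is supported in $K\csubset D$, $w^\mu$ is harmonic in $D\setminus K$ with homogeneous Neumann data; interior-to-boundary regularity for the Neumann problem in the piecewise $\cont^{1,\alpha}$ domain (equivalently, a representation of $\gamma w^\mu$ as an integral over $K$ against an interior Neumann kernel that is smooth at boundary points) shows $\gamma w^\mu\in\cont(\partial D)/\R$ with $\|\gamma w^\mu\|_{\cont(\partial D)/\R}\le C\|\mu\|_{\cont'(\partial D)}$; linearity is inherited from $\mu\mapsto w^\mu$. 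Uniqueness of the operator is immediate, for if two candidates satisfy \eqref{eq:def:relative:NtoD:and:Q} then their difference $g\in\cont(\partial D)/\R$ has $\int_{\partial D}g\d\eta=0$ for all $\eta\in\cont'_\diamond(\partial D)$, forcing $g=0$ because $\cont'_\diamond(\partial D)$ is the full dual of $\cont(\partial D)/\R$.

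To upgrade \eqref{eq:def:relative:NtoD:and:Q} to measures I first treat $\mu\in\cont'_\diamond(\partial D)$ with smooth $\eta$: testing the $w^\mu$-equation with $\varphi=u^\eta_\bg\in H^1(D)$ and using \eqref{eq:def:u1mu} (extended to $H^1$ test functions) to write $\int_{\partial D}\gamma w^\mu\d\eta=\int_D\nabla u^\eta_\bg\cdot\nabla w^\mu\d x$, then adding and subtracting the $(1-\sigma)$-term and inserting $u^\mu=u^\mu_\bg+w^\mu$, gives exactly $\int_{\partial D}\gamma w^\mu\d\eta=Q_\sigma(\mu,\eta)$. For general $\eta$ I would approximate $\eta_j\wto\eta$ via Lemma~\ref{lemma:approximation}: the left-hand side converges because $\gamma w^\mu$ is a fixed element of $\cont(\partial D)/\R$, while the right-hand side converges because $\nabla u^{\eta_j}_\bg\to\nabla u^\eta_\bg$ uniformly on $K$ (differentiate \eqref{eq:u1:Neumann:function} under the integral; its kernel is continuous in $y\in\partial D$ for $x\in K$) and $\nabla u^\mu$ is fixed in $L^1(K)$. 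This establishes \eqref{eq:def:relative:NtoD:and:Q} for all $\mu,\eta$.

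Finally, for \eqref{eq:NtoD:weak:convergence} with smooth $\mu_j\wto\mu$, $\eta_j\wto\eta$, I would use the smooth identity $\langle\eta_j,(\Lambda_\sigma-\Lambda_\bg)\mu_j\rangle=Q_\sigma(\mu_j,\eta_j)$ and split
\[
Q_\sigma(\mu_j,\eta_j)-Q_\sigma(\mu,\eta)=\int_K(1-\sigma)\nabla u^{\mu_j}\cdot\nabla(u^{\eta_j}_\bg-u^\eta_\bg)\d x+\int_K(1-\sigma)(\nabla u^{\mu_j}-\nabla u^\mu)\cdot\nabla u^\eta_\bg\d x.
\]
The first term is bounded by $C\|\nabla u^{\mu_j}\|_{L^2(K)}\|\nabla(u^{\eta_j}_\bg-u^\eta_\bg)\|_{L^2(K)}\to0$, since $\sup_j\|\mu_j\|_{\cont'(\partial D)}<\infty$ (Banach--Steinhaus) controls the first factor through the decomposition while the background gradients converge uniformly on $K$. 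The second term is \textbf{the main obstacle}: it requires $\nabla u^{\mu_j}\rightharpoonup\nabla u^\mu$ weakly in $L^2(K)$, which is not a formal consequence of $\mu_j\wto\mu$ because the total-variation norm is uncontrolled under weak* convergence and $W^{1,1}$ is non-reflexive. The decomposition again saves the argument: $\{w^{\mu_j}\}$ is bounded in $H^1(D)$, any weak $H^1$-limit solves the same coercive $L^2$-source equation as $w^\mu$ (using $\nabla u^{\mu_j}_\bg\to\nabla u^\mu_\bg$ in $L^2(K)$), and Lax--Milgram uniqueness forces $\nabla u^{\mu_j}=\nabla u^{\mu_j}_\bg+\nabla w^{\mu_j}\rightharpoonup\nabla u^\mu$ in $L^2(K)$, so the second term vanishes in the limit. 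Passing to the limit in the smooth symmetry $Q_\sigma(\mu_j,\eta_j)=Q_\sigma(\eta_j,\mu_j)$ by the same estimate then delivers symmetry of $Q_\sigma$ on all of $\cont'_\diamond(\partial D)$.
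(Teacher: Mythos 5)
Your architecture is essentially sound and shares the paper's backbone: the decomposition $u^\mu = u^\mu_\bg + w^\mu$ with $w^\mu \in H^1(D)/\R$ obtained by Lax--Milgram from the interior source (this is exactly Lemma~\ref{lemma:umu:solution:and:w} and \eqref{eq:def:w}), the kernel bounds on $K$ coming from \eqref{eq:u1:Neumann:function}--\eqref{eq:nabla:u1:Neumann:function}, the definition $(\Lambda_\sigma - \Lambda_\bg)\mu := \gamma w^\mu$, and the smooth-data identity upgraded by weak* density (Lemma~\ref{lemma:approximation}). You genuinely diverge in two places. First, you get symmetry of $Q_\sigma$ by passing to the limit in the elementary identity $Q_\sigma(\mu_j,\eta_j)=Q_\sigma(\eta_j,\mu_j)$ for continuous densities (self-duality of $\Lambda_\sigma,\Lambda_\bg$ on $H^{-1/2}_\diamond$), whereas the paper transplants the problem conformally to the unit disk and invokes the smooth-domain Sobolev theory (Lemmas \ref{lemma:same:NtoD:definitions} and \ref{lemma:relative:NtoD:self:duality}); your route is more elementary, provided your limit theorem is already in place. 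Second, for \eqref{eq:NtoD:weak:convergence} you pair strong $L^2(K)$ convergence of the background gradients with \emph{weak} $L^2(K)$ convergence of $\nabla u^{\mu_j}$, the latter extracted by a bounded-subsequence/uniqueness argument; the paper's factorization $Q_\sigma(\mu,\eta)=\int_U (SG\mu)\cdot(G\eta)\d x$ (Theorem~\ref{thm:NtoD:factorization}) instead routes \emph{both} slots through $G:\mu\mapsto\nabla u_\bg^\mu|_U$, which is weak*-to-strong continuous, so both arguments converge strongly and no weak-compactness argument is needed. Your identification and resolution of the ``main obstacle'' is correct; the paper's factorization simply dissolves it.

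The one step you cannot black-box is the claim that $\gamma w^\mu \in \cont(\partial D)/\R$ (with a sup-norm bound). ``Interior-to-boundary regularity for the Neumann problem'' is classical in smooth domains, but $D$ has corners, and this regularity is precisely the crux of extending the theory to piecewise smooth domains --- it is what the paper's conformal apparatus is built to deliver: Lemma~\ref{lemma:relative:NtoD:is:trace:of:w} maps to the disk, uses that $w^{\tilde\mu}$ is harmonic near $\partial B$ with homogeneous Neumann data (hence smooth up to $\partial B$), and composes with the H\"older homeomorphism $\Phi$, which in turn rests on Theorem~\ref{thm:pull:back:var} and Theorem~\ref{thm:conformal:sharp:angle:intermediary}. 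Your parenthetical alternative (represent $\gamma w^\mu$ by an interior Neumann kernel that is smooth at boundary points) is not independent of this machinery either: the only Neumann-type kernel available for $D$ is $N(\Phi(\cdot),\Phi(\cdot))$, i.e., again the conformal map. So you must either reproduce the paper's argument or explicitly invoke boundary H\"older regularity for conormal problems in Lipschitz domains (De Giorgi--Nash--Moser), or flatten each corner locally by $z\mapsto z^{\pi/\gamma}$; as written, the proposal assumes the key non-smooth-domain fact. Note, finally, that the operator norm bound does not actually need this regularity: once $\gamma w^\mu\in\cont(\partial D)/\R$ and \eqref{eq:def:relative:NtoD:and:Q} are known, $\|\gamma w^\mu\|_{\cont(\partial D)/\R}\le\sup_{\|\eta\|\le 1}|Q_\sigma(\mu,\eta)|\le C\|\mu\|_{\cont'(\partial D)}$ because $\cont'_\diamond(\partial D)$ is the full dual of $\cont(\partial D)/\R$; only the \emph{membership} truly requires the boundary regularity.
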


The function $(\Lambda_\sigma - \Lambda_\bg)\mu$ is given by the Dirichlet trace $\gamma (u^\mu - u^\mu_\bg)$, where $u^\mu - u^\mu_\bg \in H^1(D)/\R$ (Lem\-ma~\ref{lemma:relative:NtoD:is:trace:of:w}). Thus the restriction of $\Lambda_\sigma - \Lambda_\bg$ to $H^{-1/2}_\diamond(\partial D) \cap \cont'_\diamond(\partial D)$ coincides
with the difference of $\Lambda_\sigma$ and $\Lambda_\bg$ as defined on page \pageref{eq:def:u:H1}, which justifies the notation.
In smooth domains $D$, the definition \eqref{eq:def:relative:NtoD:and:Q} likewise coincides with the continuous extension $\Lambda_\sigma - \Lambda_\bg : H^{-s}_\diamond(\partial D) \to H^s(\partial D)/\R$ of the relative map  to distributional Sobolev spaces with $s > 1/2$ (Lemma~\ref{lemma:same:NtoD:definitions}). 
The continuity of $\Lambda_\sigma - \Lambda_\bg : \cont'_\diamond(\partial D) \to \cont(\partial D)/\R$ is proven with the aid of a factorization in Theorem~\ref{thm:NtoD:factorization}.

\begin{definition}\label{def:bisweep:data}
The function $\varsigma_\sigma : \partial D \times \partial D \rightarrow \R$,
\[
	\varsigma_\sigma(x,y)
	= Q_\sigma(\delta_x - \delta_y, \delta_x - \delta_y)
	,
\]
where $\delta_x : f \mapsto f(x)$, is called the \emph{bisweep data} of $\sigma$.
\end{definition}

Bisweep data can be seen as a \emph{point electrode model} of the following two-electrode EIT measurement: the voltage required to maintain unit current between electrodes at $x$ and $y$ is measured. The same measurement is then performed with a homogeneous reference object of shape $D$ (i.e., $\sigma = 1$ in $D$). The difference between these two measurements, as a function of the electrode positions, is modeled by the bisweep data. See \cite{pem} and \cite{sweep} for a rigorous treatment of this argument in smooth domains.

Theorem \ref{thm:NtoD:continuity} allows one to extend the partial data result \cite{bisweep} for Calderón problem to piecewise smooth plane domains:
\begin{theorem}\label{thm:partial:calderon}
Let $\Gamma \subset \partial D$ be countably infinite and $\sigma$ satisfy \eqref{eq:feasible:sigma}. The knowledge of $\varsigma_\sigma$ on $\Gamma \times \Gamma$ uniquely determines $\sigma$.
\end{theorem}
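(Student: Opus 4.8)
\emph{Proof idea.} The plan is to reduce to the unit disk by a conformal change of variables, upgrade the partial data to full bisweep data through a real-analyticity argument, and finally invoke the full-data uniqueness theorem of Astala and Päivärinta \cite{astalapaivarinta2006}. First I would transport the problem to $B$. Let $\Phi : D \to B$ be a conformal map; as $D$ is a Jordan domain, $\Phi$ extends to a homeomorphism $\overline D \to \overline B$ carrying $\partial D$ onto $S^1 := \partial B$. Writing $\tilde\sigma := \sigma \circ \Phi^{-1}$, the conformal invariance of the Dirichlet integral in the plane gives, for the push-forward measures $\Phi_*\mu$ (and $\Phi_*\delta_x = \delta_{\Phi(x)}$, with no Jacobian factor since the data in \eqref{eq:def:umu} is measure-valued),
\begin{equation*}
	Q_\sigma(\delta_x - \delta_y, \delta_p - \delta_q) = Q_{\tilde\sigma}(\delta_{\Phi(x)} - \delta_{\Phi(y)}, \delta_{\Phi(p)} - \delta_{\Phi(q)}),
\end{equation*}
so that $\varsigma_\sigma(x,y) = \tilde\varsigma(\Phi(x),\Phi(y))$, where $\tilde\varsigma$ denotes the bisweep data of $\tilde\sigma$ in $B$. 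Moreover \eqref{eq:sigma:boundary:homogeneity} is preserved: $\supp(1-\tilde\sigma) \csubset B$. Since $\Phi|_{\partial D}$ is injective, $\Phi(\Gamma) \subset S^1$ is again countably infinite, and knowing $\varsigma_\sigma$ on $\Gamma \times \Gamma$ is the same as knowing $\tilde\varsigma$ on $\Phi(\Gamma) \times \Phi(\Gamma)$.

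The heart of the argument is the claim that $\tilde\varsigma$ is real-analytic on all of $S^1 \times S^1$. Here \eqref{eq:sigma:boundary:homogeneity} is decisive: fixing a compact $K$ with $\supp(1-\tilde\sigma) \subset K \csubset B$, the background solution $\tilde u^{\delta_s - \delta_t}_\bg(z) = N(z,s) - N(z,t)$ (Lemma~\ref{lemma:u1:Neumann:function}) and its gradient are, for $z \in K$ and $s,t \in S^1$, jointly real-analytic, because the singularity of $N(z,\cdot)$ never meets $S^1$. Writing $\tilde u^\mu = v^\mu + \tilde u^\mu_\bg$ with $v^\mu \in H^1(B)/\R$ (Lemma~\ref{lemma:relative:NtoD:is:trace:of:w}), the corrector $v^{\delta_s-\delta_t}$ solves a fixed elliptic equation with right-hand side $(1-\tilde\sigma)\nabla \tilde u^{\delta_s-\delta_t}_\bg$, which is a real-analytic $L^2(B)$-valued function of $(s,t)$; since the solution operator is bounded and linear, $(s,t) \mapsto v^{\delta_s - \delta_t}$ is real-analytic into $H^1(B)/\R$. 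Feeding both factors into $\tilde\varsigma(s,t) = \int_B (1-\tilde\sigma)\nabla \tilde u^{\delta_s-\delta_t} \cdot \nabla \tilde u^{\delta_s-\delta_t}_\bg \d x$ exhibits $\tilde\varsigma$ as a convergent pairing of real-analytic factors, hence real-analytic. With analyticity in hand, pick an accumulation point $s_0 \in S^1$ of the countably infinite set $\Phi(\Gamma)$ and apply the one-dimensional identity theorem one variable at a time: for each fixed $t \in \Phi(\Gamma)$ the analytic function $s \mapsto \tilde\varsigma(s,t)$ is determined by its values on $\Phi(\Gamma)$, and then for each fixed $s$ the analytic function $t \mapsto \tilde\varsigma(s,t)$ is determined likewise. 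Thus $\tilde\varsigma$ — and hence $\varsigma_\sigma$ on all of $\partial D \times \partial D$ — is recovered.

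It remains to pass from full bisweep data to $\sigma$. Using the symmetry of $Q_\sigma$ (Theorem~\ref{thm:NtoD:continuity}), the full four-point function is recovered by the polarization identity
\begin{equation*}
	w(x,y,p,q) = \tfrac12\bigl[\varsigma_\sigma(y,p) + \varsigma_\sigma(x,q) - \varsigma_\sigma(x,p) - \varsigma_\sigma(y,q)\bigr],
\end{equation*}
and bilinearity then yields $Q_\sigma(\mu,\eta)$ for all finitely supported zero-mean $\mu,\eta$ (write $\mu = \sum_i c_i(\delta_{x_i} - \delta_{x_0})$, and similarly $\eta$). Such measures are weak* dense in $\cont'_\diamond(\partial D)$ (cf.\ Lemma~\ref{lemma:approximation}); fixing $\mu$, the output $(\Lambda_\sigma - \Lambda_\bg)\mu$ is a fixed element of $\cont(\partial D)/\R$, so by \eqref{eq:def:relative:NtoD:and:Q} the map $\eta \mapsto Q_\sigma(\mu,\eta) = \int_{\partial D}[(\Lambda_\sigma - \Lambda_\bg)\mu]\d\eta$ is weak* continuous and hence determined for every $\eta$; by symmetry the same follows in the first slot, giving $Q_\sigma$, and thus $\Lambda_\sigma - \Lambda_\bg$, on all of $\cont'_\diamond(\partial D)$. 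Restricting to the dense subspace $H^{-1/2}_\diamond(\partial D) \cap \cont'_\diamond(\partial D)$ and adding the known $\Lambda_\bg$ recovers the classical Neumann-to-Dirichlet map $\Lambda_\sigma$, whence \cite{astalapaivarinta2006} determines $\sigma$.

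The main obstacle is the real-analyticity of the bisweep data, and it is the reason the passage to $B$ is essential: the boundary of $D$ is merely piecewise $\cont^{1,\alpha}$, so $\varsigma_\sigma$ need not be analytic as a function on $\partial D \times \partial D$, whereas after the conformal transfer the carrier curve $S^1$ is analytic and the homogeneity \eqref{eq:sigma:boundary:homogeneity} keeps $\supp(1-\tilde\sigma)$ bounded away from $S^1$, which is exactly what makes $\tilde\varsigma$ analytic despite the corners of $D$ and the low regularity of $\tilde\sigma$ in the interior. A secondary subtlety is that, because $\tilde\varsigma$ is a function of two variables, a single accumulation point of $\Phi(\Gamma)$ does not by itself force agreement; one genuinely uses the product structure $\Phi(\Gamma) \times \Phi(\Gamma)$ and applies the identity theorem separately in each variable.
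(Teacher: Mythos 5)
Your proposal is correct, and at the top level it follows the same route as the paper: transport everything to the unit disk by a conformal map (your transfer identity is exactly \eqref{eq:bisweep:conformal}, which the paper derives from Theorem~\ref{thm:pull:back:var}), then exploit analyticity of the disk data together with the full-data result \cite{astalapaivarinta2006}. The difference lies in what is proved versus cited. The paper's own proof of Theorem~\ref{thm:partial:calderon} is a two-line reduction that outsources the unit-disk step to \cite[Corollary 2.3 \& Remark 2.4]{bisweep}; you reprove that step in-line, and your reconstruction deviates in technique from the paper's supporting machinery. For the analyticity, the paper (Theorem~\ref{thm:four:electrode:function}) complexifies the kernel \eqref{eq:complex:g}, establishes strong holomorphy of the $L^2$-valued maps, and invokes Hartogs' theorem for joint holomorphy, whereas you stay real, arguing real-analyticity of the Banach-valued maps $(s,t)\mapsto \nabla \tilde u_\bg^{\delta_s-\delta_t}|_K$ and of the corrector via the bounded solution operator; since you run the identity theorem one variable at a time, separate analyticity suffices, so you avoid Hartogs entirely --- a genuinely more elementary variant. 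For the final step, your polarization identity recovering $w(x,y,p,q)$ from $\varsigma_\sigma$ is precisely the discrete form of the kernel relation in Theorem~\ref{thm:bisweep:NtoD:kernel} (the signs match), and your weak*-density argument recovering $\Lambda_\sigma-\Lambda_\bg$ parallels the paper's route through Theorem~\ref{thm:partial:calderon:w}. Two small blemishes, neither fatal: the polarization identity should be justified without writing the undefined terms $Q_\sigma(\delta_x,\delta_p)$ (expand within mean-free differences such as $\delta_x-\delta_q$, or simply quote Theorem~\ref{thm:bisweep:NtoD:kernel}); and Lemma~\ref{lemma:approximation} approximates measures by continuous \emph{functions}, not by finitely supported measures, so the weak* density of discrete mean-free measures should be cited as a standard fact rather than attributed to that lemma.
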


This can be generalized as follows

\begin{theorem}\label{thm:partial:calderon:w}
Let $\Gamma, \Xi$ be (possibly disjoint) countably infinite subsets of $\partial D$.
The knowledge of
\begin{equation}
\label{eq:def:four:electrode:function}
	w(x,y,p,q) := Q_\sigma(\delta_x - \delta_y, \delta_p - \delta_q)
\end{equation}
for all $x,y \in \Gamma$, $p,q \in \Xi$ uniquely determines $\sigma$.
\end{theorem}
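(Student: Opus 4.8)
The plan is to collapse the four–point data to a two–variable kernel, to promote it from the countable sets $\Gamma,\Xi$ to a pair of open subarcs by analyticity, and finally to recognize the resulting object as relative Neumann–to–Dirichlet data to which the uniqueness mechanism behind Theorem~\ref{thm:partial:calderon} applies. First I would fix reference points $y_0\in\Gamma$ and $q_0\in\Xi$ and introduce the kernel $G(x,p):=w(x,y_0,p,q_0)=Q_\sigma(\delta_x-\delta_{y_0},\delta_p-\delta_{q_0})$. Writing $\delta_x-\delta_y=(\delta_x-\delta_{y_0})-(\delta_y-\delta_{y_0})$ and similarly in the second slot, the bilinearity of $Q_\sigma$ from Theorem~\ref{thm:NtoD:continuity} gives
\[
  w(x,y,p,q)=G(x,p)-G(x,q)-G(y,p)+G(y,q),
\]
so knowing $w$ on $\Gamma\times\Gamma\times\Xi\times\Xi$ is equivalent to knowing $G$ on $\Gamma\times\Xi$.

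The second step is analyticity of $G$ in each variable. By \eqref{eq:def:relative:NtoD:and:Q} one has $G(x,p)=g_p(x)-g_p(y_0)$ with $g_p:=(\Lambda_\sigma-\Lambda_\bg)(\delta_p-\delta_{q_0})=\gamma\big(u^{\delta_p-\delta_{q_0}}-u^{\delta_p-\delta_{q_0}}_\bg\big)$, which lies in $H^1(D)/\R$ by Lemma~\ref{lemma:relative:NtoD:is:trace:of:w}. Since $\supp(1-\sigma)\csubset D$ by \eqref{eq:sigma:boundary:homogeneity}, the conductivity equals $1$ in a one–sided neighborhood of $\partial D$; there the two potentials share the same local singularities along the boundary, so their difference is harmonic near $\partial D$ and has vanishing normal derivative on $\partial D$. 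Reflecting this harmonic function across each $\cont^{1,\alpha}$ arc shows that $g_p$, hence $x\mapsto G(x,p)$, is real–analytic on the interior of every boundary arc; by the symmetry of $Q_\sigma$ (Theorem~\ref{thm:NtoD:continuity}) the same holds for $p\mapsto G(x,p)$.

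Next I would pass from the countable sets to open arcs. A countably infinite subset of the compact boundary accumulates, and since there are finitely many arcs, infinitely many points of $\Gamma$ lie on, and accumulate within, a single smooth arc at some point $x^\ast$; the same holds for $\Xi$ at some $p^\ast$ (the case of accumulation at a corner is handled by restricting to the one–sided sequence and using continuity up to the endpoint). Fixing $p\in\Xi$ and invoking the identity theorem for the real–analytic function $x\mapsto G(x,p)$ determines it on an open subarc $W\ni x^\ast$; fixing then $x\in W$ and repeating in $p$ determines $p\mapsto G(x,p)$ on an open subarc $V\ni p^\ast$. Thus $G$ is determined on $W\times V$. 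By bilinearity this yields $Q_\sigma(\mu,\eta)$ for all finitely supported zero–mean $\mu,\eta$ carried by $\overline W$ and $\overline V$, and by weak* density (Lemma~\ref{lemma:approximation} together with \eqref{eq:NtoD:weak:convergence}) for all $\mu,\eta\in\cont'_\diamond(\partial D)$ supported in $\overline W$ and $\overline V$ respectively. This is precisely the relative Neumann–to–Dirichlet data from the open set $V$ to the open set $W$.

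It remains to recover $\sigma$ from this data, and this is the heart of the matter. Using the factorization of Theorem~\ref{thm:NtoD:factorization} and \eqref{eq:def:relative:NtoD:and:Q} one has $Q_\sigma(\mu,\eta)=\int_D(1-\sigma)\nabla u^\mu\cdot\nabla u^\eta_\bg\d x$, where the integrand is supported in $\supp(1-\sigma)\csubset D$. Because this support is compactly contained in $D$, a Runge approximation argument shows that the interior fields $\nabla u^\eta_\bg$ generated by sources $\eta$ on the open arc $V$, and the fields $\nabla u^\mu$ generated by $\mu$ on $W$, are rich enough to reconstruct the full relative map, after which $\sigma$ is determined by the interior uniqueness of Astala and Päivärinta \cite{astalapaivarinta2006}; this is exactly the argument underlying Theorem~\ref{thm:partial:calderon}, now run with possibly disjoint source and measurement arcs. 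I expect this last step to be the main obstacle: when $\Gamma$ and $\Xi$ are disjoint so that $W\cap V=\emptyset$, no diagonal (bisweep) value $Q_\sigma(\delta_z-\delta_w,\delta_z-\delta_w)$ is accessible and one cannot literally reduce to Theorem~\ref{thm:partial:calderon}; the separated–arc Runge estimate, rather than analyticity alone, is what makes disjoint partial data suffice. A secondary technical point is the corner–accumulation case in the analytic continuation step.
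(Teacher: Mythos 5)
There is a genuine gap, and it occurs exactly at the two places where your argument must do real work. First, the analyticity step is unjustified and, as stated, false: Schwarz reflection requires a real-analytic arc, not a $\cont^{1,\alpha}$ one, so reflecting $g_p$ across the smooth pieces of $\partial D$ does not make $x \mapsto G(x,p)$ real-analytic in arc length. Indeed, the paper only asserts H\"older continuity of such boundary data on $\partial D$ (Theorem~\ref{thm:bisweep:NtoD:kernel} gives $\varsigma_\sigma \in \cont^\alpha$, not analyticity), precisely because composing with the boundary homeomorphism $\Phi$ destroys analyticity when the arcs are not analytic. The paper's remedy is to transplant the whole problem to the unit disk first, via Theorem~\ref{thm:pull:back:var} and \eqref{eq:bisweep:conformal}: the accumulation points of $\Gamma$, $\Xi$ map to accumulation points of $\tilde\Gamma = -i\log\Phi(\Gamma)$, $\tilde\Xi = -i\log\Phi(\Xi)$ in angular variables, and the four-electrode function $\tilde\omega$ of Theorem~\ref{thm:four:electrode:function} is jointly analytic on a complex strip $V^4$; the identity theorem is then applied in these angular variables, never on $\partial D$ itself. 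This also disposes of your corner-accumulation worry (which your one-sided-continuity remark does not fix: a real-analytic function on an open interval can vanish on a sequence accumulating at an endpoint without vanishing identically); in the disk picture there are no corners and every real accumulation point lies in the interior of the strip of analyticity.

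Second, the step you yourself flag as the main obstacle --- recovering $\sigma$ when $W \cap V = \emptyset$ --- is left as an appeal to an unproven ``Runge approximation argument,'' and this is not how the paper closes the gap. The paper's mechanism is again analyticity: from the mixed derivative data
\[
	\frac{\partial^j}{\partial \theta_1^j} \frac{\partial^k}{\partial \phi_1^k} \tilde \omega(\theta_1,\theta_0,\phi_1,\phi_0)
	= \langle \delta_{\theta_1}^{(j)}, (\Lambda_{\tilde \sigma} - \Lambda_\bg) \delta_{\phi_1}^{(k)} \rangle ,
\]
obtained at the accumulation points, one uses the fact that $(\Lambda_{\tilde\sigma} - \Lambda_\bg)\delta_{\phi_1}^{(k)}$ is real-analytic on the \emph{whole} circle $\partial B$, so its Taylor data at $\theta_1$ determines it globally by analytic continuation along $\partial B$ --- in particular its derivatives at $\phi_1$, i.e.\ the diagonal data $\langle \delta_{\phi_1}^{(j)}, (\Lambda_{\tilde \sigma} - \Lambda_\bg) \delta_{\phi_1}^{(k)} \rangle$ that your separated-arc setup could never access directly. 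This global continuation along the circle is the bridge between disjoint $\Gamma$ and $\Xi$; the proof then concludes via \cite[Theorem 2.2]{bisweep}, Theorem~\ref{thm:bisweep:NtoD:kernel}, and \eqref{eq:bisweep:conformal} to recover $\Lambda_\sigma - \Lambda_\bg$, and finally $\sigma$ by \cite{astalapaivarinta2006}. Your reduction of $w$ to the two-point kernel $G$ via bilinearity is fine, but without the disk transplantation your analyticity claim has no support, and without the global-continuation argument the disjoint case remains open in your write-up.
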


The above also relates to the next corollary, which can alternatively be proven directly using a unique continuation argument. (Cauchy data for Laplace's equation in some neighborhood of the boundary is always available on a certain part of $\partial D$.)

\begin{corollary}\label{corollary:L2:partial:calderon}
Let $V, W$ be (possibly disjoint) non-empty relatively open subsets of $\partial D$ and denote by $\cont_\diamond(V) \subset \cont_\diamond(\partial D)$ (resp., $\cont_\diamond(W)$) %
the continuous mean-free functions supported in $V$ (resp., $W$). The knowledge of
\[
	\left\{ \left(f,g, \langle f, \Lambda_\sigma g \rangle\right) \,:\, f \in \cont_\diamond(V),\;g \in \cont_\diamond(W) \right\},
\]
uniquely determines any conductivity $\sigma$ that satisfies \eqref{eq:feasible:sigma}. In other words, if
$
	\langle f, \Lambda_\sigma g \rangle$ $=$ $\langle f, \Lambda_{\tilde \sigma} g \rangle
$
for all $f \in \cont_\diamond(V)$, $g \in \cont_\diamond(W)$, then $\sigma = \tilde \sigma$ almost everywhere in $D$.
\end{corollary}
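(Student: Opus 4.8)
The plan is to deduce the corollary from Theorem~\ref{thm:partial:calderon:w} by recovering the four-electrode data $w(x,y,p,q)$ from the given bilinear measurements. The starting observation is that for mean-free continuous functions, viewed as measures $f\d s, g\d s \in \cont'_\diamond(\partial D)$, the defining identity \eqref{eq:def:relative:NtoD:and:Q} together with the symmetry of $Q_\sigma$ yields
\[
	\langle f, \Lambda_\sigma g\rangle - \langle f, \Lambda_\bg g\rangle
	= \langle f, (\Lambda_\sigma - \Lambda_\bg) g\rangle
	= Q_\sigma(g,f) = Q_\sigma(f,g).
\]
Here I use that $\cont_\diamond(\partial D) \subset L^2_\diamond(\partial D) \subset H^{-1/2}_\diamond(\partial D) \cap \cont'_\diamond(\partial D)$, so that the two definitions of the relative map coincide. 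Since $\Lambda_\bg$ depends only on the (known) domain $D$ and not on $\sigma$, the hypothesis $\langle f, \Lambda_\sigma g\rangle = \langle f, \Lambda_{\tilde\sigma} g\rangle$ for all $f \in \cont_\diamond(V)$ and $g \in \cont_\diamond(W)$ is therefore equivalent to $Q_\sigma(f,g) = Q_{\tilde\sigma}(f,g)$ for all such $f,g$.

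Next I would fix arbitrary countably infinite subsets $\Gamma \subset V$ and $\Xi \subset W$; these exist because $V$ and $W$ are non-empty relatively open subsets of the curve $\partial D$. For any $x,y \in \Gamma$ and $p,q \in \Xi$, I would construct mean-free approximating sequences $f_j \in \cont_\diamond(V)$ with $f_j \wto \delta_x - \delta_y$ and $g_j \in \cont_\diamond(W)$ with $g_j \wto \delta_p - \delta_q$. Both are obtained by mollification along $\partial D$ with respect to arc length: take normalized continuous bumps $\varphi_j^x, \varphi_j^y$ concentrating at $x$ and $y$, with $\int_{\partial D}\varphi_j^x\d s = \int_{\partial D}\varphi_j^y\d s = 1$ and supports eventually contained in $V$ (possible since $x,y \in V$ and $V$ is open in $\partial D$), and set $f_j := \varphi_j^x - \varphi_j^y$. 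Then $f_j \in \cont_\diamond(V)$ and $\int_{\partial D} h\, f_j\d s \to h(x) - h(y)$ for every $h \in \cont(\partial D)$, i.e.\ $f_j \wto \delta_x - \delta_y$; the sequence $g_j$ is built in the same way inside $W$.

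I would then invoke the weak* continuity \eqref{eq:NtoD:weak:convergence} of Theorem~\ref{thm:NtoD:continuity} with the matched sequences $\mu_j = f_j$ and $\eta_j = g_j$, obtaining
\[
	w(x,y,p,q) = Q_\sigma(\delta_x - \delta_y,\, \delta_p - \delta_q) = \lim_{j\to\infty} Q_\sigma(f_j,g_j).
\]
By the first step each $Q_\sigma(f_j,g_j)$ equals $Q_{\tilde\sigma}(f_j,g_j)$, so passing to the limit shows that $w$ is the same for $\sigma$ and $\tilde\sigma$ at every point of $\Gamma\times\Gamma\times\Xi\times\Xi$. Theorem~\ref{thm:partial:calderon:w}, applied to the countably infinite sets $\Gamma$ and $\Xi$, then forces $\sigma = \tilde\sigma$ almost everywhere in $D$.

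The genuinely routine parts are the construction of the mollified bumps and the verification of their weak* convergence. The point that requires care is that the approximants remain in $\cont_\diamond(V)$ and $\cont_\diamond(W)$ --- that is, simultaneously mean-free and supported inside the prescribed open sets --- also when $x,y,p,q$ happen to lie at one of the finitely many corner points of $\partial D$, where one still has a neighborhood of positive arc length contained in $V$ or $W$ on which to place the bump. Finally, I would remark that the stated alternative holds as well: since Cauchy data for the Laplace equation in a boundary neighborhood is available on a part of $\partial D$, a unique continuation argument applied to $u^f - u^f_\bg$ gives the same conclusion without recourse to Theorem~\ref{thm:partial:calderon:w}.
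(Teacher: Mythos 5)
Your proposal is correct and follows essentially the same route as the paper: approximate $\delta_x-\delta_y$ and $\delta_p-\delta_q$ by mean-free continuous functions supported in $V$ and $W$ (the paper delegates this to Lemma~\ref{lemma:approximation}, you build the bump differences by hand), pass to the limit via \eqref{eq:NtoD:weak:convergence}, subtract the \emph{a priori} known background term $\langle f_j,\Lambda_\bg g_j\rangle$, and conclude with Theorem~\ref{thm:partial:calderon:w} applied to countably infinite $\Gamma\subset V$, $\Xi\subset W$.
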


These results are based on the fact that, in the unit disk $D = B$, bisweep data and its generalization in Theorem~\ref{thm:partial:calderon:w} are jointly analytic functions. Bisweep data also comprise (up to a scaling factor) the \emph{Schwartz kernel} %
of the relative Neumann-to-Dirichlet map in the sense that %

\begin{theorem}\label{thm:bisweep:NtoD:kernel}
For any $\mu, \eta \in \cont'_\diamond(\partial D)$,
\begin{equation*}
	\int_{\partial D} [(\Lambda_\sigma - \Lambda_\bg) \mu] \d \eta = -\frac12 \iint\limits_{\partial D \times \partial D} \varsigma_\sigma \d \mu \times \eta,
\end{equation*}
and $\varsigma_\sigma \in \cont^{\alpha}(\partial D \times \partial D)$ for some $\alpha > 0$.
\end{theorem}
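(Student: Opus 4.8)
The plan is to reduce both assertions to a single statement: the bilinear form admits a representation
\[
	Q_\sigma(\mu,\eta) = \iint\limits_{\partial D\times\partial D} K \d\mu\times\eta, \qquad \mu,\eta\in\cont'_\diamond(\partial D),
\]
with a \emph{symmetric} kernel $K\in\cont^{\alpha}(\partial D\times\partial D)$ for some $\alpha>0$. Granting this, both claims are immediate. Inserting $\mu=\eta=\delta_x-\delta_y$ gives $\varsigma_\sigma(x,y)=K(x,x)+K(y,y)-2K(x,y)$, a continuous combination of $K$ and its diagonal restriction, so $\varsigma_\sigma\in\cont^{\alpha}(\partial D\times\partial D)$. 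For the kernel identity I would substitute this expression into $-\tfrac12\iint\varsigma_\sigma\d\mu\times\eta$ and apply Fubini: the terms $\int K(x,x)\d\mu(x)\int\d\eta$ and $\int\d\mu\int K(y,y)\d\eta$ vanish because $\mu(\partial D)=\eta(\partial D)=0$, and the surviving term equals $\iint K\d\mu\times\eta = Q_\sigma(\mu,\eta) = \int_{\partial D}[(\Lambda_\sigma-\Lambda_\bg)\mu]\d\eta$ by \eqref{eq:def:relative:NtoD:and:Q}. This is precisely the claimed identity, sign and constant included.

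The heart of the argument, and the step I expect to be the main obstacle, is constructing $K$ and proving its Hölder continuity up to the diagonal. Here I would use $\supp(1-\sigma)\csubset D$ decisively. Fix an open $\Omega$ with $\supp(1-\sigma)\csubset\Omega\csubset D$, write $\chi=1-\sigma$, and start from
\[
	Q_\sigma(\mu,\eta) = \int_\Omega \chi\,\nabla u^\mu\cdot\nabla u^\eta_\bg\d z, \qquad \nabla u^\eta_\bg(z) = \int_{\partial D} a(z,t)\d\eta(t),
\]
where $a(z,t):=\nabla_z N(\Phi(z),\Phi(t))$ by the representation \eqref{eq:u1:Neumann:function}. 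Since $\Phi(\overline\Omega)$ is a compact subset of the open disk $B$ while $\Phi(t)$ lies on $\partial B$, the kernel $a$ is bounded and real-analytic in $z\in\Omega$; and by the conformal-map regularity collected in the appendix, $t\mapsto a(\cdot,t)$ is Hölder continuous into $\cont(\overline\Omega)^2$ with some exponent $\alpha>0$, the $\cont^{1,\alpha}$ smoothness of the boundary arcs entering here. Writing $u^\mu=u^\mu_\bg+w^\mu$, the corrector $w^\mu\in H^1(D)/\R$ solves a divergence-form problem whose source $\chi\,\nabla u^\mu_\bg$ is supported in $\Omega$, so that $\nabla u^\mu|_\Omega=(I+S)A\mu$, where $A\mu=\int_{\partial D}a(\cdot,s)\d\mu(s)$ and $S\colon L^2(\Omega)^2\to L^2(\Omega)^2$ is the interior solution operator. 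Coercivity from $\sigma\ge c>0$ shows $S$ is bounded by the energy estimate alone, so no elliptic regularity of the merely $L^\infty$ coefficient is needed.

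Interchanging the bounded operator $I+S$ with the measure integral and applying Fubini over the compact set $\overline\Omega\times\partial D\times\partial D$ then yields the representation with
\[
	K(s,t) = \int_\Omega \chi(z)\,[(I+S)a(\cdot,s)](z)\cdot a(z,t)\d z,
\]
valid for all $\mu,\eta\in\cont'_\diamond(\partial D)$. Joint Hölder continuity follows by estimating $|K(s,t)-K(s',t')|$ through two increments and invoking boundedness of $I+S$ together with the Hölder dependence of $s\mapsto a(\cdot,s)$ and $t\mapsto a(\cdot,t)$ in $L^2(\Omega)^2$. Finally, symmetry of $Q_\sigma$ (Theorem~\ref{thm:NtoD:continuity}) forces $K(s,t)-K(t,s)$ to be a gauge term $b(s)+c(t)$ annihilated by mean-free measures, so replacing $K$ by $\tfrac12(K(s,t)+K(t,s))$ gives a symmetric Hölder kernel representing the same form and, being gauge-invariant, leaving $\varsigma_\sigma$ unchanged. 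This closes the reduction.
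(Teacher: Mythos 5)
Your proposal is correct, but it follows a genuinely different route from the paper's. The paper proves this theorem in two stages: first (Theorem~\ref{thm:four:electrode:function} and Corollary~\ref{corollary:four:electrode:disk}) it complexifies the factorization \eqref{eq:factorization:G} in the unit disk and invokes Hartogs' theorem to show that the four-electrode function is jointly analytic near $(\partial B)^4$; then (Lemma~\ref{lemma:bisweep:NtoD:kernel}) it pulls this function back to $\partial D$ — the Hölder continuity of the pulled-back function, hence of $\varsigma_\sigma$, coming from $\Phi \in \cont^\alpha(\overline D)$, exactly the same source as in your argument — expands it in a uniformly convergent four-dimensional Fourier series, computes the coefficients explicitly in terms of $\lambda_{ij} = Q_\sigma(\phi_i,\phi_j)$, and so obtains the kernel identity for $f,g \in \cont_\diamond(\partial D)$; the extension to measures is then a weak* density argument via \eqref{eq:NtoD:weak:convergence}. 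You instead build the Schwartz kernel directly from the same factorization (Theorem~\ref{thm:NtoD:factorization}), $K(s,t) = \int_\Omega \chi\,[(I+S)a(\cdot,s)]\cdot a(\cdot,t)\d z$ with $a(z,t) = \nabla_z N(\Phi(z),\Phi(t))$, prove its Hölder continuity from the boundedness of $I+S$ and the Hölder continuity of $t \mapsto a(\cdot,t)$, symmetrize using the symmetry of $Q_\sigma$ (your gauge observation — that a continuous kernel annihilated by all pairs of mean-free measures must have the form $b(s)+c(t)$ — is correct, as one sees by testing with $\delta_x-\delta_y$ and $\delta_p-\delta_q$), and read off both claims at once. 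Your route is more elementary: no several-complex-variables machinery, no Fourier-series convergence theorem for Hölder functions, and no limiting step, since the representation holds for measures directly; it also yields an explicit kernel formula. What it does not give is the by-product the paper extracts from the Fourier computation, namely the uniformly convergent series \eqref{eq:bisweep:NtoD:series}, which is part of Lemma~\ref{lemma:bisweep:NtoD:kernel} and is reused for the discretization in Section~\ref{sec:rec:numerics}; nor the analyticity statements themselves, which the paper needs anyway for Theorem~\ref{thm:partial:calderon:w}. The technical points you leave implicit — Bochner integrability of $s \mapsto a(\cdot,s)$ so that $I+S$ passes under the measure integral, and Fubini on $\overline\Omega \times \partial D \times \partial D$ — are standard and do hold here, precisely because $s \mapsto a(\cdot,s)$ is (Hölder) continuous into $\cont(\overline\Omega)^2$ by the Hölder continuity of $\Phi$ up to the boundary.
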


Many of these relations can be proven by generalizing the corresponding result from the unit disk $B$ (or some other smooth domain) to an arbitrary piecewise smooth plane domain $D$ using the fact that \emph{point current sources} $\mu = \sum_{j=1}^m \alpha_j \delta_{x_j}$ are moved naturally by conformal mappings (cf. \cite{sweep}). 
In particular, bisweep data provide a natural method of ``transporting'' relative Neumann-to-Dirichlet maps between different domains. Namely, if $\tilde \sigma = \sigma \circ \Phi^{-1}$, where $\Phi$ is a conformal map from $D$ to the unit disk $B$, then
\begin{equation}
\label{eq:bisweep:conformal}
	\varsigma_\sigma(x,y) = \varsigma_{\tilde \sigma}(\Phi(x),\Phi(y))
\end{equation}
for all $x,y \in \partial D$. %
Notice that, since $\partial D$ is a Jordan curve, $\Phi$ extends to a homeomorphism between $\overline D$ and $\overline B$. In fact, $\Phi$ is Hölder continuous and its further smoothness properties are determined by the smoothness of the arcs of $\partial D$ and its vertex angles (cf. \cite[Ch. 3]{pommerenke1992} and Theorem~\ref{thm:conformal:sharp:angle:intermediary}).

\section{Proofs of the results}\label{sec:proofs}

It follows from Poincaré inequality %
and the boundedness assumptions on $\sigma$ that the left hand side of \eqref{eq:def:u:H1} defines a bounded and coercive bilinear form in the Hilbert space $H^1(D)/\R$. The right hand side is a continuous functional in $(H^1(D)/\R)'$ due to the trace theorem \cite[§2.5.4]{necas}.
Hence the unique solvability of \eqref{eq:def:u:H1} and continuity with respect to the data follows readily from Riesz representation theorem. (cf. \cite[Chapter VII, §1.2.2]{dautraylions})

In case of \eqref{eq:def:umu} and \eqref{eq:def:u1mu} the above technique fails because the distributions defined by the right hand sides are not generally bounded in $(H^1(D))'$ (since $\cont'(\partial D) \nsubseteq H^{-1/2}(\partial D)$). %
To the best knowledge of the author, the distributional theory studied by, e.g., Lions, Magenes, and Ne{\v c}as \cite{lionsmagenes}\cite{necas} is not directly applicable either, due to the higher regularity requirements for the domain $D$.

This section shows an alternative approach, based on the work of Král \cite{kral1980}, for solving these problems.

\subsection{Weak Neumann problems}\label{sec:weak:neumann}

The fact that the weak background Neumann problem \eqref{eq:def:u1mu} has a unique (harmonic) solution $u^\mu_\bg \in \cont^\infty(D)/\R$ such that $|\nabla u^\mu_\bg| \in L^1(D)$ follows from \cite{kral1980} provided that certain geometric assumptions on $D$ are satisfied. It is stated in, e.g., \cite{medkova1997third} that piecewise $\cont^{1,\alpha}$ smoothness is sufficient. Regarding the conductivity equation, there holds

\begin{lemma}\label{lemma:umu:solution:and:w}
The weak conductivity equation \eqref{eq:def:umu} has a unique solution $u^\mu \in W^{1,1}(D)/\R$, given by $u^\mu = u^\mu_\bg + w^\mu$, where $w^\mu \in H^1(D)/\R$ satisfies
\begin{equation}
\label{eq:def:w}
	\int_D \sigma \nabla w^\mu \cdot \nabla \varphi \d x
	= \int_D (1-\sigma) \nabla u_\bg^\mu \cdot \nabla \varphi \d x
	\qquad \mbox{for all }\varphi \in H^1(D)/\R.
\end{equation}
\end{lemma}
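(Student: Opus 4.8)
The plan is to reduce the weak conductivity equation for the singular source $\mu$ to a *regular* variational problem by splitting off the known background solution. Since the background problem \eqref{eq:def:u1mu} is already solved (the preceding paragraph grants a unique harmonic $u^\mu_\bg \in \cont^\infty(D)/\R$ with $|\nabla u^\mu_\bg| \in L^1(D)$), the idea is to write the sought solution as $u^\mu = u^\mu_\bg + w^\mu$ and to show that the correction $w^\mu$ solves a problem with data that lives in the *good* Hilbert space $H^1(D)/\R$, so that the coercivity-plus-Riesz argument already used for \eqref{eq:def:u:H1} applies verbatim.

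First I would establish that \eqref{eq:def:w} is uniquely solvable in $H^1(D)/\R$. The left-hand side is exactly the bilinear form of \eqref{eq:def:u:H1}, which is bounded and coercive on $H^1(D)/\R$ by Poincaré and the bounds \eqref{eq:sigma:bounds}. For the right-hand side, the key observation is that by \eqref{eq:sigma:boundary:homogeneity} the integrand is supported in $\supp(1-\sigma) \csubset D$; on this compact set $u^\mu_\bg$ is smooth and hence $\nabla u^\mu_\bg$ is bounded, so the functional $\varphi \mapsto \int_D (1-\sigma)\nabla u^\mu_\bg \cdot \nabla\varphi \d x$ is continuous on $H^1(D)/\R$ (indeed the $L^1$-integrability of $\nabla u^\mu_\bg$ globally, together with essential boundedness of $1-\sigma$, already gives a bound by $\|\nabla\varphi\|_{L^\infty}$ on a smaller set; but the interior-support fact makes this immediate and avoids any delicate boundary estimate). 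The Riesz representation theorem then delivers a unique $w^\mu \in H^1(D)/\R$.

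Next I would verify that $u^\mu := u^\mu_\bg + w^\mu$ indeed solves \eqref{eq:def:umu}. For any test function $\varphi \in \cont^\infty(\overline D)$ (a legitimate element of $H^1(D)/\R$), I add the defining identity \eqref{eq:def:w} for $w^\mu$ to the background identity \eqref{eq:def:u1mu} for $u^\mu_\bg$. The terms recombine so that
\[
	\int_D \sigma \nabla u^\mu \cdot \nabla\varphi \d x
	= \int_D \sigma \nabla w^\mu \cdot \nabla\varphi \d x
	+ \int_D \sigma \nabla u^\mu_\bg \cdot \nabla\varphi \d x,
\]
and substituting \eqref{eq:def:w} into the first integral collapses the $\sigma$-weighted background gradient against the $(1-\sigma)$-weighted one, leaving precisely $\int_D \nabla u^\mu_\bg \cdot \nabla\varphi \d x = \int_{\partial D}\varphi \d\mu$. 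Membership $u^\mu \in W^{1,1}(D)/\R$ is clear since $w^\mu \in H^1(D) \subset W^{1,1}(D)$ (as $D$ is bounded) and $|\nabla u^\mu_\bg| \in L^1(D)$.

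Finally, uniqueness in $W^{1,1}(D)/\R$ requires a small argument, and this is the step I expect to be the main obstacle: the difference $d$ of two solutions satisfies $\int_D \sigma \nabla d \cdot \nabla\varphi \d x = 0$ for all $\varphi \in \cont^\infty(\overline D)$, but a priori $d$ lies only in $W^{1,1}$, where the coercive energy estimate is not directly available and smooth functions must first be shown dense enough to test against. I would handle this by noting that any $W^{1,1}$ solution of the homogeneous problem is, by elliptic regularity for the divergence-form equation $\nabla\cdot(\sigma\nabla d)=0$ with the homogeneous Neumann condition, as regular as the background solution away from $\supp(1-\sigma)$ and in particular harmonic near $\partial D$; combining this with the interior regularity forced by the equation shows $\nabla d \in L^2(D)$, at which point testing against $d$ itself and invoking coercivity forces $\nabla d = 0$, i.e.\ $d$ is constant. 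Alternatively, uniqueness can be inherited directly from the uniqueness of the background problem plus that of \eqref{eq:def:w}, since the decomposition $u^\mu = u^\mu_\bg + w^\mu$ is forced once one knows both pieces are individually determined.
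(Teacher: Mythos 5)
Your existence half is exactly the paper's proof: coercivity of the left-hand side of \eqref{eq:def:w} via Poincar\'e and \eqref{eq:sigma:bounds}, continuity of the right-hand side because $u^\mu_\bg$ is smooth on $\supp(1-\sigma)\csubset D$, Riesz/Lax--Milgram to get $w^\mu \in H^1(D)/\R$, density of $\cont^\infty(\overline D)/\R$ in $H^1(D)/\R$ to pass between the two classes of test functions, and addition of \eqref{eq:def:w} to \eqref{eq:def:u1mu}. One minor omission: membership $u^\mu \in W^{1,1}(D)/\R$ also requires $u^\mu_\bg \in L^1(D)$, not only $|\nabla u^\mu_\bg| \in L^1(D)$; $u^\mu_\bg$ is smooth only in the open set $D$ and could a priori fail to be integrable up to the boundary, which is why the paper invokes Maz'ya (\S 1.1.11) at this point. (Also, your parenthetical about a bound by $\|\nabla\varphi\|_{L^\infty}$ would not give continuity on $H^1(D)/\R$, but you do not rely on it.)

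The genuine gap is in the uniqueness step, which you correctly single out as the main obstacle. Your proposed fix --- ``interior regularity forced by the equation shows $\nabla d \in L^2(D)$'' --- is not available: $\sigma$ is only measurable with the bounds \eqref{eq:sigma:bounds}, and for divergence-form equations with merely $L^\infty$ coefficients a distributional solution of class $W^{1,1}$ need \emph{not} belong to $H^1_{\rm loc}$. This is precisely the setting of Serrin's pathological solutions, and in the plane the phenomenon is sharp (Astala--Iwaniec--Saksman): distributional $W^{1,q}$ solutions self-improve to $W^{1,2}_{\rm loc}$ only for $q$ above the critical exponent $2K/(K+1)>1$, where $K$ is the ellipticity contrast, and counterexamples exist below it. De Giorgi--Nash--Moser-type interior theory takes solutions already in the energy class as input; it cannot upgrade $W^{1,1}$ to $H^1$. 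Weyl's lemma does make $d$ harmonic, hence smooth, in $D\setminus\supp(1-\sigma)$ (where $\sigma\equiv 1$), but regularity is a local matter and does not propagate into $\supp(1-\sigma)$. Your fallback (``uniqueness is inherited since the decomposition is forced'') is circular for the same reason: uniqueness for \eqref{eq:def:w} holds in $H^1(D)/\R$, so it constrains a competitor $u$ only after one knows $u-u^\mu_\bg \in H^1(D)/\R$, which is exactly the missing point. For comparison, the paper attempts no such upgrade: its argument yields existence, and uniqueness only within the affine class $u^\mu_\bg + H^1(D)/\R$, i.e., among solutions whose difference from the background solution has finite energy, via the same density remark you use; read literally as uniqueness in all of $W^{1,1}(D)/\R$, the issue you identified is left open there as well, so your instinct was sound even though the proposed repair fails.
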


\begin{proof}
Due to Poincaré inequality \cite[§1.1.7]{necas} and the boundedness assumptions in \eqref{eq:feasible:sigma}, the left hand side of \eqref{eq:def:w} is bounded and coercive.
The right hand side defines a continuous functional $\varphi \mapsto \int_D (1-\sigma) \nabla u_\bg^\mu \cdot \nabla \varphi \d x$ in $(H^1(D)/\R)'$,
for $u_\bg^\mu$ is smooth in $\supp(1-\sigma)$.
As a result, there exists a unique solution $w^\mu$. Furthermore, $\cont^\infty(\overline D)/\R$ is dense in $H^1(D)/\R$ (cf. \cite[§1.1.6]{mazja1985}) %
so \eqref{eq:def:w} is satisfied for all $\varphi \in H^1(D)/\R$ if and only if it holds for all $\varphi \in \cont^\infty(\overline D)/\R$.
It follows that $u_\bg^\mu + w^\mu$ is the unique solution to \eqref{eq:def:umu}.

Notice that $u^\mu_\bg$ is also in $L^1(D)/\R$ by \cite[§1.1.11]{mazja1985} and therefore $u^\mu_\bg$, $u^\mu \in W^{1,1}(D)/\R$.
\end{proof}

The following two lemmas state the relationship between the weak formulations presented above and a distributional Sobolev space formulation based on trace theorems, utilized in, e.g., \cite{bisweep}.

\begin{lemma}\label{lemma:same:u1}
Let $D$ be a $\cont^\infty$ smooth domain, $\mu \in \cont'_\diamond(\partial D)$ arbitrary, and $s < 1$. 
Then $u_\bg = u_\bg^\mu$ solves \eqref{eq:def:u1mu} if and only if $u_\bg \in X^{s}$ %
and
\begin{equation}
\label{eq:def:u1:distributional}
	\Delta u_\bg = 0 \ \mbox{ in }D, \qquad \gamma_1 u_\bg = \mu \ \mbox{ on }\partial D,
\end{equation}
where $\gamma_1 : X^{s} \to H^{s-3/2}(\partial D)$ is the continuous extension of the normal derivative in, e.g., the Banach space $X^{s} = \{ u \in H^{s}(D)/\R \,:\, \Delta u \in L^2(D) \}$ equipped with the graph norm (cf. \cite[Chapter 2, Remark 7.2]{lionsmagenes}).
\end{lemma}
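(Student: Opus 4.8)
The plan is to read both sides of the claimed equivalence through the generalized Green identity valid in $X^s$, which is exactly the trace theory cited from \cite{lionsmagenes}. Concretely, I would first record that on a $\cont^\infty$ domain the normal trace $\gamma_1 : X^s \to H^{s-3/2}(\partial D)$ is bounded, that there is a companion Dirichlet trace $\gamma_0 : X^s \to H^{s-1/2}(\partial D)$, and that the first Green identity
\begin{equation*}
	\int_D \nabla u \cdot \nabla \varphi \d x + \int_D (\Delta u)\varphi \d x = \langle \gamma_1 u, \gamma_0 \varphi \rangle_{H^{s-3/2}(\partial D) \times H^{3/2-s}(\partial D)}
\end{equation*}
holds for all $u \in X^s$ and $\varphi \in \cont^\infty(\overline D)$. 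Because $s < 1$, the dual exponent satisfies $3/2 - s > 1/2$, so the one-dimensional Sobolev embedding $H^{3/2-s}(\partial D) \hookrightarrow \cont(\partial D)$ is available; dualizing it gives $\cont'(\partial D) \hookrightarrow H^{s-3/2}(\partial D)$ with $\langle \mu, g \rangle = \int_{\partial D} g \d \mu$ for continuous $g$. Thus $\mu \in \cont'_\diamond(\partial D)$ is a bona fide element of $H^{s-3/2}(\partial D)$, the constraint $\mu(\partial D) = 0$ is precisely the zero-mean solvability condition of the Neumann problem, and ``$\gamma_1 u_\bg = \mu$'' is meaningful as an equality in $H^{s-3/2}(\partial D)$.

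With these identifications the equivalence becomes a short computation closed by a uniqueness argument. For the implication \eqref{eq:def:u1:distributional} $\Rightarrow$ \eqref{eq:def:u1mu}, I would substitute $\Delta u_\bg = 0$ and $\gamma_1 u_\bg = \mu$ into the Green identity, turning its right-hand side into $\langle \mu, \gamma_0 \varphi \rangle = \int_{\partial D} \varphi \d \mu$ and leaving $\int_D \nabla u_\bg \cdot \nabla \varphi \d x$ on the left, which is \eqref{eq:def:u1mu}. For the converse, testing \eqref{eq:def:u1mu} against $\varphi \in \cont^\infty_c(D)$ first shows $u_\bg$ is distributionally, hence classically, harmonic, so $\Delta u_\bg = 0 \in L^2(D)$; once $H^s$-membership is known this places $u_\bg \in X^s$, and a second use of the Green identity together with the density of $\{ \gamma_0 \varphi : \varphi \in \cont^\infty(\overline D) \}$ in $H^{3/2-s}(\partial D)$ forces $\gamma_1 u_\bg = \mu$. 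The cleanest bookkeeping, avoiding circularity, is to invoke the Lions--Magenes solvability theorem for a unique $u_* \in X^s$ solving \eqref{eq:def:u1:distributional}, use the forward implication to see that $u_*$ solves \eqref{eq:def:u1mu}, and then match $u_*$ with Kr\'al's unique weak solution $u_\bg^\mu$; the two uniqueness statements yield the ``if and only if'' in both directions.

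The step I expect to carry the real weight is the boundary regularity: showing that the solution carrying the measure-valued Neumann datum genuinely lies in $H^s(D)$ for every $s < 1$, rather than merely in the $W^{1,1}(D)$ class produced by \cite{kral1980}. I would obtain this either from the Lions--Magenes solvability theorem for the Neumann problem on a smooth domain, which places the solution in $H^s(D)$, or---staying closer to the paper---from the explicit representation \eqref{eq:u1:Neumann:function}: writing $\tilde\mu = \Phi_\# \mu$ identifies $u_\bg^\mu = U^{\tilde\mu} \circ \Phi$, where $U^{\tilde\mu}$ is the disk Neumann potential of the finite mean-zero measure $\tilde\mu$ on $\partial B$, so conformal invariance reduces the claim to the unit disk and the mapping property $H^{s-3/2}(\partial B) \to H^s(B)$ of that potential gives $u_\bg^\mu \in H^s$ for all $s < 1$. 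The same kernel estimate, via $\int_B \int_{\partial B} |x-y|^{-1} \d |\tilde\mu|(y) \d x < \infty$, re-derives $\nabla u_\bg^\mu \in L^1$, which is what makes the gradient term in the Green identity an honest integral and keeps the duality-pairing and integral readings of \eqref{eq:def:u1mu} consistent. The only delicacy is the admissible range of $s$: the threshold $s < 1$ is dictated exactly by $\mu \in H^{s-3/2}(\partial D)$, and I would keep $1/2 < s < 1$ throughout so that both $\gamma_0$ and the boundary Sobolev embedding remain available.
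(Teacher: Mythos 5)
Your proposal is correct in its overall architecture, but it is a genuinely different proof from the paper's, essentially running the equivalence in the opposite direction. The paper proves weak $\Rightarrow$ distributional directly: because Kr\'al's solution is a priori only in $W^{1,1}(D)/\R$, it approximates the \emph{solution} by $\cont^\infty(\overline D)$ functions in the norm $\|v\|_{L^1(D)}+\||\nabla v|\|_{L^1(D)}+\|\Delta v\|_{L^2(D)}$ (Maz'ya's density theorem, \cite[Theorem 1.1.6.2]{mazja1985}), uses $L^1(D)\hookrightarrow H^{-2}(D)$ to pass Green's identity to the limit in $X^{-2}$, i.e.\ first establishes \eqref{eq:def:u1:distributional} at the very weak level $s\leq-2$, and then reaches every $s<1$ by combining uniqueness of \eqref{eq:def:u1:distributional} at $s\leq-2$ with existence of a solution at level $s$. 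You instead prove distributional $\Rightarrow$ weak directly (approximating the \emph{datum} $\mu$, or using Fubini with the Neumann kernel, rather than approximating the solution), and close the loop with Lions--Magenes existence/uniqueness in $X^s$ plus Kr\'al's \emph{uniqueness} of the weak solution --- an ingredient the paper's proof of this lemma never needs (it only uses Kr\'al's existence). Both closures are sound; yours stays in the natural range of $s$, is more modular, and avoids Maz'ya's theorem, while the paper's handles the rough $W^{1,1}$ solution head-on and is independent of any representation formula. Two details in your sketch need to be firmed up: first, the Green identity as you state it is false for general $u\in X^s$ with small $s$ (the term $\int_D \nabla u\cdot\nabla\varphi\d x$ need not even be defined), so it must be asserted only for the harmonic solutions carrying measure data and then justified via the kernel bound $\int_D\int_{\partial D}|x-y|^{-1}\d|\mu|(y)\d x<\infty$ together with a limiting or Fubini argument, which you indicate but do not carry out; second, since you restrict to $1/2<s<1$, the cases $s\leq 1/2$ claimed by the lemma still require the (routine, but unstated) nesting argument $X^{s_0}\subset X^{s}$ for $s\leq s_0$ combined with uniqueness of \eqref{eq:def:u1:distributional} at each level.
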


\begin{proof}
Assume that $u_\bg = u_\bg^\mu$ solves \eqref{eq:def:u1mu} and $s \leq -2$.
Clearly, $\Delta u_\bg = 0$ in $D$. According to \cite[Theorem 1.1.6.2]{mazja1985} (and its proof), there exists a sequence $\{ u_j \} \subset \cont^\infty(\overline D)$ that converges to (an arbitrary representative of) $u_\bg$ w.r.t. %
the norm 
\[
	\| v \|_* := \| v \|_{L^1(D)} + \||\nabla v|\|_{L^1(D)} + \|\Delta v\|_{L^2(D)}.
\]
Due to Sobolev's embedding theorem, %
\[
	\| v \|_{H^{-2}(D)} = \sup_{\|\varphi\|_{H^2_0(D)}=1} \left|\int_D v \varphi \d x \right|
	\leq \sup_{\|\varphi\|_{H^2_0(D)}=1} \| v \|_{L^1(D)} \| \varphi \|_{\cont(\overline D)}
	\leq C \| v \|_{L^1(D)}
\]
and consequently, $u_j \rightarrow u_\bg \in X^{-2}$, $\frac{\partial u_j}{\partial \nu} \rightarrow \gamma_1 u_\bg$ in $H^{-7/2}(\partial D)$. 
For any $\varphi \in \cont^\infty(\partial D)$, let $\tilde \varphi$ be an arbitrary extension of $\varphi$ to $\cont^\infty(\overline D)$. Then, by Green's first identity,
\[
\begin{split} 
	\langle \gamma_1 u_\bg, \varphi \rangle
	&= \lim_{j\to \infty} \langle \tfrac{\partial u_j}{\partial \nu}, \varphi \rangle
	= \lim_{j\to \infty} \int_{\partial D} \tfrac{\partial u_j}{\partial \nu} \varphi \d s
	= \lim_{j\to \infty} \int_D (\nabla u_j \cdot \nabla \tilde \varphi + \tilde \varphi \Delta u_j) \d x \\
	&= \int_D \nabla u_\bg \cdot \nabla \tilde \varphi \d x 
	= \int_{\partial D} \varphi \d \mu,
\end{split}
\]%
which means that $\gamma_1 u_\bg = \mu$. Thus $u_\bg$ solves \eqref{eq:def:u1:distributional}.

Since the solution $\tilde u_\bg$ to \eqref{eq:def:u1:distributional} with $s \leq -2$ is unique, it follows that it can always be identified with a function (equivalence class) $u_\bg$ that satisfies \eqref{eq:def:u1mu}. If $u_\bg$ solves \eqref{eq:def:u1:distributional} for $-2 < s < 1$, this also remains true for $s \leq -2$. Furthermore, a unique solution to \eqref{eq:def:u1:distributional} exists for any $s < 1$ (because $\cont'(\partial D) \subset H^{-1/2-\epsilon}(\partial D)$ for any $\epsilon > 0$), which proves the general claim.
\end{proof}

In smooth domains $D$, it is possible to extend the operator $\Lambda_\sigma - \Lambda_\bg : H^{-1/2}_\diamond(\partial D) \rightarrow H^{1/2}(\partial D)/\R$ (as defined on page \pageref{eq:def:u:H1}) to a continuous mapping between the Sobolev spaces $H^{-s}_\diamond(\partial D)$ and $H^s(\partial D)/\R$ for any $s \in \R$ (cf. \cite{convexbss}). The next lemma shows that the result coincides with the definition \eqref{eq:def:relative:NtoD:and:Q}.

\begin{lemma}\label{lemma:same:NtoD:definitions}
Let $D$ be a $\cont^\infty$ smooth domain and $s > 1/2$. Then
\begin{equation}
\label{eq:NtoD:integral:formula:sobolev}
	\langle \eta, (\Lambda_\sigma - \Lambda_\bg) \mu \rangle = Q_\sigma(\mu,\eta) = Q_\sigma(\eta,\mu)
\end{equation}
where $\mu, \eta \in \cont'_\diamond(\partial D)$ are arbitrary and $\Lambda_\sigma - \Lambda_\bg : H^{-s}_\diamond(\partial D) \rightarrow H^s(\partial D)/\R$.
\end{lemma}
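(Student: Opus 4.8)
The plan is to show that for smooth $D$ and $s>1/2$, the continuous Sobolev-space extension $\Lambda_\sigma-\Lambda_\bg:H^{-s}_\diamond(\partial D)\to H^s(\partial D)/\R$ agrees with the bilinear form $Q_\sigma$ from \eqref{eq:def:relative:NtoD:and:Q} when tested against measures in $\cont'_\diamond(\partial D)$. The natural strategy is a density and continuity argument: I would first verify the identity $\langle\eta,(\Lambda_\sigma-\Lambda_\bg)\mu\rangle=Q_\sigma(\mu,\eta)$ for smooth data $\mu,\eta\in\cont_\diamond(\partial D)\cap H^{-s}_\diamond(\partial D)$ (indeed for $\mu,\eta\in H^{-1/2}_\diamond(\partial D)$), where both sides are the ordinary relative Neumann-to-Dirichlet pairing; here $u^\mu,u^\mu_\bg\in H^1(D)/\R$ and the formula $Q_\sigma(\mu,\eta)=\int_D(1-\sigma)\nabla u^\mu\cdot\nabla u^\eta_\bg\d x$ is the standard representation of $\langle\eta,(\Lambda_\sigma-\Lambda_\bg)\mu\rangle$ obtained by subtracting \eqref{eq:def:umu} and \eqref{eq:def:u1mu} and using $w^\mu=u^\mu-u^\mu_\bg$ from Lemma~\ref{lemma:umu:solution:and:w}. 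The symmetry $Q_\sigma(\mu,\eta)=Q_\sigma(\eta,\mu)$ on smooth data follows from the self-duality of $\Lambda_\sigma-\Lambda_\bg$, or directly by the usual polarization/Green's identity computation.

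Next I would extend from smooth data to arbitrary $\mu,\eta\in\cont'_\diamond(\partial D)$ by weak* approximation. Since $\cont'_\diamond(\partial D)\subset H^{-1/2-\epsilon}_\diamond(\partial D)\subset H^{-s}_\diamond(\partial D)$ for $\epsilon>0$ small and any $s>1/2$, each measure lives in the Sobolev space on which the extended operator $\Lambda_\sigma-\Lambda_\bg$ is continuous, so the left-hand pairing $\langle\eta,(\Lambda_\sigma-\Lambda_\bg)\mu\rangle$ is well-defined and jointly continuous in $(\mu,\eta)$ with respect to the $H^{-s}\times H^{-s}$ topology. For the right-hand side, I would invoke Lemma~\ref{lemma:same:u1}, which identifies $u^\mu_\bg$ (solving \eqref{eq:def:u1mu}) with the solution of the distributional Neumann problem \eqref{eq:def:u1:distributional} in $X^{s}$; this gives continuous dependence of $u^\mu_\bg$, and hence of $u^\mu=u^\mu_\bg+w^\mu$, on $\mu$ in appropriate norms. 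One then picks sequences $\mu_j,\eta_j\in\cont_\diamond(\partial D)$ with $\mu_j\to\mu$, $\eta_j\to\eta$ in $H^{-s}$ (mollification provides such sequences that simultaneously converge weak* in $\cont'_\diamond$), passes to the limit in the already-established smooth identity, and concludes $\langle\eta,(\Lambda_\sigma-\Lambda_\bg)\mu\rangle=Q_\sigma(\mu,\eta)$ in general. The symmetry claim $Q_\sigma(\mu,\eta)=Q_\sigma(\eta,\mu)$ then transfers to the limit as well.

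The main obstacle will be establishing the joint continuity of $Q_\sigma(\mu,\eta)=\int_D(1-\sigma)\nabla u^\mu\cdot\nabla u^\eta_\bg\d x$ under $H^{-s}$-convergence of the data, since the gradients live only in $L^1(D)$ a priori and the integrand is supported where $1-\sigma\neq0$. The key observation that saves the argument is the boundary-homogeneity assumption \eqref{eq:sigma:boundary:homogeneity}: because $\supp(1-\sigma)\csubset D$ is compactly contained in the interior, both $u^\mu_\bg$ and $w^\mu$ are harmonic (resp.\ solve an elliptic equation) and hence smooth in a neighborhood of $\supp(1-\sigma)$, so interior elliptic regularity bounds $\nabla u^\mu$ and $\nabla u^\eta_\bg$ in, say, $\cont(\overline{K})$ on a compact set $K$ with $\supp(1-\sigma)\subset\mathrm{int}\,K\csubset D$ in terms of the $H^{-s}(\partial D)$ norms of the data. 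This localization converts the delicate boundary regularity into harmless interior estimates and makes $Q_\sigma$ continuous on $H^{-s}_\diamond\times H^{-s}_\diamond$, closing the limiting argument.
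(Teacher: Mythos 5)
Your overall strategy---prove the Green's-formula identity for smooth data, identify the weak background solutions with the distributional ones via Lemma~\ref{lemma:same:u1}, and then pass to measures $\mu,\eta\in\cont'_\diamond(\partial D)\subset H^{-s}_\diamond(\partial D)$ by a continuity/density argument that exploits the interior localization $\supp(1-\sigma)\csubset D$---is exactly the route the paper takes. However, one step of your justification is wrong as stated: you claim that $w^\mu$, because it ``solves an elliptic equation,'' is smooth in a neighborhood of $\supp(1-\sigma)$, so that $\nabla u^\mu$ is bounded in $\cont(\overline K)$ in terms of the data. This fails precisely where it matters. On $\supp(1-\sigma)$ the coefficient $\sigma$ is merely $L^\infty$ (assumption \eqref{eq:sigma:bounds}), and $w^\mu$ solves $\nabla\cdot(\sigma\nabla w^\mu)=\nabla\cdot\left((\sigma-1)\nabla u_\bg^\mu\right)$ there; with bounded measurable coefficients, De Giorgi--Nash theory yields only local H\"older continuity of $w^\mu$ itself, and $\nabla w^\mu$ is in general no better than $L^2_{\rm loc}$ (Meyers' theorem gives $L^{2+\epsilon}_{\rm loc}$ at best). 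The function $w^\mu$ is harmonic, hence smooth, only \emph{outside} $\supp(1-\sigma)$---exactly the region that contributes nothing to $Q_\sigma$.

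The gap is localized and fixable, and the fix is what the paper's proof does. Pointwise interior bounds are needed only for the \emph{harmonic} background solutions $u_\bg^\mu$, $u_\bg^\eta$, for which interior elliptic regularity does give continuity of $\phi\mapsto u_\bg^\phi|_U : H^{-s}_\diamond(\partial D)\to H^1(U)/\R$ with $U\csubset D$ containing $\supp(1-\sigma)$. For $w^\mu$ one needs only an $L^2(D)$ bound on its gradient, and this comes from the energy (coercivity) estimate for the variational problem \eqref{eq:def:w}: the right-hand side of \eqref{eq:def:w} is controlled by $\|\nabla u_\bg^\mu\|_{L^2(U)}$, so the solution operator $P : u_\bg^\mu|_U \mapsto w^\mu$, $H^1(U)/\R \to H^1(D)/\R$, is bounded (this is the operator $P$ of Theorem~\ref{thm:NtoD:factorization}, to whose proof the paper appeals). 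With these two ingredients one gets
\[
	|Q_\sigma(\mu,\eta)| \leq \|1-\sigma\|_{L^\infty(D)}\left(\|\nabla u_\bg^\mu\|_{L^2(U)}+\|\nabla w^\mu\|_{L^2(D)}\right)\|\nabla u_\bg^\eta\|_{L^2(U)} \leq C\,\|\mu\|_{H^{-s}}\|\eta\|_{H^{-s}},
\]
which is the joint continuity your limiting argument requires; the rest of your proposal then goes through unchanged.
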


\begin{proof}
Fix an arbitrary $s > 1/2$ and let $U \csubset D$ be such that $\supp(1-\sigma) \csubset U$. 
For any $\phi \in H^{-s}_\diamond(\partial D)$, let $u^\phi_\bg$ be the unique solution to \eqref{eq:def:u1:distributional} with boundary value $\phi$, which is clearly equivalent to definitions \eqref{eq:def:u:H1} and \eqref{eq:def:u1mu} for smooth $\phi$.
Moreover, for any smooth functions $\phi, \varphi \in \cont^\infty_\diamond(\partial D)$, the ``Green's formulas''
\begin{equation}
\label{eq:NtoD:integral:formula:smooth}
\begin{split}
	\langle \varphi, (\Lambda_\sigma - \Lambda_\bg) \phi \rangle
	&= \int_D (1-\sigma) \nabla u_\bg^\phi \cdot \nabla u^\varphi \d x
	= \int_D (1-\sigma) \nabla u^\phi \cdot \nabla u_\bg^\varphi \d x \\
	&= \int_U (1-\sigma) \nabla (u_\bg^\phi + w^\phi) \cdot \nabla u_\bg^\varphi \d x
\end{split}
\end{equation}
where $w^\phi$ solves \eqref{eq:def:w}, follow readily from the definitions of $\Lambda_\sigma$, $\Lambda_\bg$ on page \pageref{eq:def:u:H1} and Lemma~\ref{lemma:umu:solution:and:w}.

Due to standard elliptic regularity theory (cf., e.g., \cite[Appendix]{convexbss}), the operator $\phi \mapsto u_\bg^\phi|_U : H^{-s}_\diamond(\partial D) \to H^1(U)/\R$ is continuous. The mapping $u_\bg^\phi|_U \mapsto w^\phi : H^1(U)/\R \to H^1(D)/\R$ is also bounded (see the proof of Theorem~\ref{thm:NtoD:factorization} for details).
Consequently, \eqref{eq:NtoD:integral:formula:smooth} is well-defined for any $\phi$, $\varphi \in H^{-s}_\diamond(\partial D)$ and it yields the unique continuous extension $\Lambda_\sigma - \Lambda_\bg : H^{-s}_\diamond(\partial D) \to H^s(\partial D)/\R$. It follows from Lemma~\ref{lemma:same:u1} that the extension satisfies \eqref{eq:NtoD:integral:formula:sobolev} for any $\mu$, $\eta \in \cont'_\diamond(\partial D) \subset H^{-s}_\diamond(\partial D)$.
\end{proof}

In the next section, the formulas \eqref{eq:def:umu}, \eqref{eq:def:u1mu} need to be considered with less smooth test functions $\varphi$. The following lemma states that they remain valid as long as $\varphi$ is Lipschitz.

\begin{lemma}\label{lemma:variation:c1uniform}
If $u = u^\mu$ solves \eqref{eq:def:umu} then \eqref{eq:def:umu} in fact holds for all 
$\varphi \in \cont^{0,1}(\overline D)$.
\end{lemma}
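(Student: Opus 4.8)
The plan is to argue by density, approximating the Lipschitz test function $\varphi$ by functions in $\cont^\infty(\overline D)$ and passing to the limit in both sides of \eqref{eq:def:umu}. The decisive structural fact is Lemma~\ref{lemma:umu:solution:and:w}: the solution satisfies $u^\mu \in W^{1,1}(D)/\R$, so $\nabla u^\mu \in L^1(D)$. Paired with the bounded conductivity $\sigma \in L^\infty(D)$ and the bounded gradient $\nabla\varphi \in L^\infty(D)$ of a Lipschitz function, the left-hand side of \eqref{eq:def:umu} is a well-defined $L^1$--$L^\infty$ pairing; the right-hand side $\int_{\partial D}\varphi\d\mu$ is finite because $\mu \in \cont'_\diamond(\partial D)$ is a finite measure and $\varphi$ is continuous. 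Thus both sides make sense for $\varphi \in \cont^{0,1}(\overline D)$, and only the equality remains to be established.

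First I would extend $\varphi$ to a globally Lipschitz function $\tilde\varphi$ on $\R^2$ with the same Lipschitz constant $L$, for instance via the McShane extension $\tilde\varphi(x) = \inf_{z\in\overline D}(\varphi(z)+L|x-z|)$. Mollifying with a standard approximate identity $\rho_\epsilon$ then yields $\varphi_\epsilon := \tilde\varphi*\rho_\epsilon \in \cont^\infty(\R^2)$, whose restrictions to $\overline D$ lie in $\cont^\infty(\overline D)$ and hence are admissible in \eqref{eq:def:umu}. I would record three properties of these approximants: $\varphi_\epsilon \to \tilde\varphi = \varphi$ uniformly on $\overline D$; the uniform bound $\|\nabla\varphi_\epsilon\|_{L^\infty(\R^2)} \le L$; and, since $\nabla\varphi_\epsilon = (\nabla\tilde\varphi)*\rho_\epsilon$ converges at every Lebesgue point, the a.e.\ convergence $\nabla\varphi_\epsilon \to \nabla\tilde\varphi$, which on the open set $D$ reads $\nabla\varphi_\epsilon \to \nabla\varphi$ a.e.\ (as $\tilde\varphi = \varphi$ there).

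With these in hand, I would insert $\varphi_\epsilon$ into \eqref{eq:def:umu} and let $\epsilon\to0$. The right-hand side converges because uniform convergence on $\partial D$ against the finite measure $\mu$ gives $\int_{\partial D}\varphi_\epsilon\d\mu \to \int_{\partial D}\varphi\d\mu$. For the left-hand side I would invoke dominated convergence: the integrands $\sigma\nabla u^\mu\cdot\nabla\varphi_\epsilon$ converge a.e.\ to $\sigma\nabla u^\mu\cdot\nabla\varphi$ and are dominated, uniformly in $\epsilon$, by $\|\sigma\|_{L^\infty}L\,|\nabla u^\mu| \in L^1(D)$. Hence $\int_D\sigma\nabla u^\mu\cdot\nabla\varphi_\epsilon\d x \to \int_D\sigma\nabla u^\mu\cdot\nabla\varphi\d x$, and the identity \eqref{eq:def:umu} survives the passage to the limit.

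The step I expect to be most delicate is not a single estimate but the reason the limit procedure is forced upon us: because $u^\mu$ is only $W^{1,1}$ rather than $H^1$, one cannot rely on $H^1$-density of smooth functions together with an $L^2$ pairing. The argument genuinely requires test functions whose gradients are controlled in $L^\infty$ and converge a.e., which is exactly what Lipschitz functions and their mollifications provide. Securing the $\epsilon$-independent $L^\infty$-bound on $\nabla\varphi_\epsilon$ (so that the dominating function is fixed) together with the a.e.\ convergence of the mollified gradients---both direct consequences of $\varphi$ being Lipschitz---is therefore the technical heart of the proof.
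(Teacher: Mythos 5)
Your proof is correct, and its skeleton is the same as the paper's: McShane--Whitney extension of $\varphi$ to a globally Lipschitz $\tilde\varphi$, mollification $\varphi_\epsilon = \tilde\varphi * \rho_\epsilon$, insertion of $\varphi_\epsilon$ into \eqref{eq:def:umu}, and passage to the limit, with the boundary term handled by uniform convergence against the finite measure $\mu$ exactly as in the paper. The one place where you diverge is the volume integral: the paper does \emph{not} use dominated convergence there. Instead it splits $D$ into a thin boundary strip $D\setminus D_\eta$ and the interior $D_\eta$, controls the strip by the uniform Lipschitz bound together with absolute continuity of the integral of $|\nabla u|\in L^1(D)$, and on $D_\eta$ uses that $\nabla\varphi_\epsilon\to\nabla\varphi$ in $L^2(D_\eta)$ paired with $\nabla u\in L^2(D_\eta)$ (which holds since $u = u_\bg^\mu + w^\mu$ with $w^\mu\in H^1(D)$ and $u_\bg^\mu$ harmonic, hence smooth, in the interior). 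Your dominated-convergence argument, resting on the $\epsilon$-uniform bound $\|\nabla\varphi_\epsilon\|_{L^\infty}\leq L$ and a.e.\ convergence of mollified gradients at Lebesgue points, is equally valid and in fact slightly leaner: it uses only the $W^{1,1}$ information $|\nabla u|\in L^1(D)$ and never invokes interior $L^2$ regularity of $\nabla u$, at the modest price of appealing to the Lebesgue differentiation theorem where the paper gets by with $L^2_{\rm loc}$ convergence of mollifications.
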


\begin{proof}
Since $\varphi$ is Lipschitz continuous, there exists an extension $\tilde \varphi : \R^d \rightarrow \R$ such that $\tilde \varphi|_{\overline D} = \varphi$ and $\||\nabla \tilde \varphi |\|_{L^\infty(\R^d)} \leq C$ (cf. \cite[§5.8.2]{evans1998} and \cite{mcshane1934extension}).
Let $\varphi_\epsilon = h_\epsilon \ast \tilde \varphi \in \cont^\infty(\R^d)$, where $h_\epsilon \in \cont^\infty_0(B_\epsilon(0))$ is a \emph{mollifier} %
and $\ast$ denotes convolution \cite[§C.4]{evans1998}.

Denote $D_\eta = \{ x \in D \;:\; {\rm dist}(x,\partial D) > \eta \}$ (where $\eta > \epsilon$). Then
\begin{align*}
	&\left|\int_D \sigma \nabla u \cdot \nabla \varphi_\epsilon \d x
	- \int_D \sigma \nabla u \cdot \nabla \varphi \d x\right| 
	=  \left|\int_D \sigma \nabla u \cdot (\nabla \tilde \varphi_\epsilon - \nabla \varphi) \d x\right| 
	\\
	&\leq \|\sigma\|_{L^\infty(D)} \left( 2 C \int_{D \setminus D_\eta}|\nabla u| \d x + \int_{D_\eta} |\nabla u| |\nabla \varphi - \nabla \varphi_\epsilon| \d x\right),
\end{align*}
which can be made arbitrarily small by first choosing a suitable $\eta > 0$ and then $\epsilon < \eta$,
because $\nabla \varphi_\epsilon \to \nabla \varphi \in L^2(D_\eta)$ and $\nabla u \in L^2(D_\eta)$.
On the other hand,
\[
	\int_D \sigma \nabla u \cdot \nabla \varphi_\epsilon \d x
	= \int_{\partial D} \varphi_\epsilon \d \mu
	\quad\xrightarrow{\epsilon \rightarrow 0}\quad
	\int_{\partial D} \varphi \d \mu
\]
due to the uniform continuity of $\varphi$.
\end{proof}

\subsection{Conformally mapped Neumann problems}

A key ingredient in the partial data results in Theorems \ref{thm:partial:calderon} and \ref{thm:partial:calderon:w} is the ability to transform to an equivalent problem in the unit disk. To this end, the next theorem describes how the solution to the weak conductivity equation transforms under conformal mappings between piecewise smooth domains. 

\begin{theorem}\label{thm:pull:back:var}
Let $D, E$ be piecewise $\cont^{1,\alpha}$ smooth plane domains and $\Phi : D \rightarrow E$ a conformal map. If $u^\mu$ solves \eqref{eq:def:umu}, then $\tilde u = u^\mu \circ \Phi^{-1}$ solves
\begin{equation}
\label{eq:pull:back:var}
	\int_{E} \tilde \sigma \nabla \tilde u \cdot \nabla \tilde \varphi \d x = \int_{\partial E} \tilde \varphi \d \tilde \mu \qquad \mbox{ for all }\tilde \varphi \in \cont^\infty(\overline E),
\end{equation}
where
\[
	\tilde \sigma = \sigma \circ \Phi^{-1}, \qquad
	\tilde \mu \in \cont'_\diamond(\partial E)  :  \tilde \varphi \mapsto \int_{\partial D} \tilde \varphi \circ \Phi \d \mu.
\]
\end{theorem}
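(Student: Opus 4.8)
The plan is to reduce \eqref{eq:pull:back:var} to \eqref{eq:def:umu} by transplanting the test function and invoking the conformal invariance of the Dirichlet form in the plane. The engine is an elementary computation. For the natural choice $\varphi := \tilde\varphi \circ \Phi$ the chain rule gives $\nabla\varphi(z) = D\Phi(z)^{T}(\nabla\tilde\varphi)(\Phi(z))$, and likewise $\nabla u^\mu(z) = D\Phi(z)^{T}(\nabla\tilde u)(\Phi(z))$ since $u^\mu = \tilde u\circ\Phi$. Because $\Phi$ is conformal, the Cauchy--Riemann equations make $D\Phi$ a scaled rotation, so $D\Phi\,(D\Phi)^{T} = |\Phi'|^{2}\,I$ and the Jacobian of $z \mapsto \Phi(z)$ equals $|\Phi'|^{2}$. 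Hence the factor $|\Phi'|^{2}$ produced by pairing the two transformed gradients cancels exactly against the $|\Phi'|^{-2}$ coming from the change of variables $w = \Phi(z)$, which yields
\[
	\int_{D} \sigma\, \nabla u^\mu \cdot \nabla\varphi \d z = \int_{E} \tilde\sigma\, \nabla\tilde u \cdot \nabla\tilde\varphi \d w,
\]
with $\tilde\sigma = \sigma\circ\Phi^{-1}$ appearing automatically.

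Granting this identity, the theorem follows at once: the right-hand side of \eqref{eq:def:umu} matches that of \eqref{eq:pull:back:var} by the very definition of $\tilde\mu$, namely $\int_{\partial D}\varphi\d\mu = \int_{\partial D}(\tilde\varphi\circ\Phi)\d\mu = \int_{\partial E}\tilde\varphi\d\tilde\mu$. Thus, feeding $\varphi = \tilde\varphi\circ\Phi$ into \eqref{eq:def:umu} and transporting both sides to $E$ produces exactly \eqref{eq:pull:back:var}. Two points must be secured to make this rigorous: that $\tilde u \in W^{1,1}(E)/\R$, so the left-hand side of \eqref{eq:pull:back:var} is defined; and that \eqref{eq:def:umu} may legitimately be tested against $\varphi = \tilde\varphi\circ\Phi$, which in general is only continuous, not smooth, up to $\partial D$.

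The main obstacle is precisely this boundary regularity of $\Phi$. At a vertex of $\partial D$ the derivative $\Phi'$ may blow up or vanish (the Kellogg--Warschawski behaviour quantified by Theorem~\ref{thm:conformal:sharp:angle:intermediary}), so $\nabla\varphi(z) = D\Phi(z)^{T}(\nabla\tilde\varphi)(\Phi(z))$ need not be bounded and $\varphi$ need not be Lipschitz on $\overline D$; hence Lemma~\ref{lemma:variation:c1uniform} does not apply directly. I would circumvent this with an exhaustion argument: on each subdomain $D_\eta = \{x\in D : \mathrm{dist}(x,\partial D) > \eta\}$ the map $\Phi$ is analytic, so $\varphi|_{D_\eta}$ is smooth, the change-of-variables identity above holds verbatim on $D_\eta$, and one passes to the limit $\eta\to 0$. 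The error is controlled by $\nabla u^\mu \in L^{1}(D)$ together with the conformal identity, which bounds the $D$-side integrand by the manifestly $L^{1}(E)$ integrand $\tilde\sigma\,\nabla\tilde u\cdot\nabla\tilde\varphi$; the boundary term converges to $\int_{\partial D}\varphi\d\mu$ by the uniform continuity of $\varphi$ on $\overline D$, just as in the proof of Lemma~\ref{lemma:variation:c1uniform}.

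To settle the membership $\tilde u\in W^{1,1}(E)/\R$, I would use the decomposition $u^\mu = u^\mu_\bg + w^\mu$ of Lemma~\ref{lemma:umu:solution:and:w}. The background part transports correctly because of the explicit representation \eqref{eq:u1:Neumann:function}: conformal covariance of the disk's Neumann--Green function identifies $u^\mu_\bg\circ\Phi^{-1}$ with the background solution $u^{\tilde\mu}_\bg$ in $E$, which lies in $W^{1,1}(E)/\R$ by the Král theory invoked for \eqref{eq:def:u1mu}. For the $H^{1}$ part, \eqref{eq:sigma:boundary:homogeneity} forces $w^\mu$ to be harmonic in a neighbourhood of $\partial D$; harmonicity and the Dirichlet energy are conformally invariant, with $\int_{E}|\nabla(w^\mu\circ\Phi^{-1})|^{2} = \int_{D}|\nabla w^\mu|^{2} < \infty$, so together with boundedness near $\partial E$ one obtains $w^\mu\circ\Phi^{-1}\in H^{1}(E)/\R$. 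By the uniqueness in Lemma~\ref{lemma:umu:solution:and:w} applied in $E$, $\tilde u = u^\mu_\bg\circ\Phi^{-1} + w^\mu\circ\Phi^{-1}$ is then the solution of \eqref{eq:pull:back:var}.
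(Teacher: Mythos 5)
Your core computation and your diagnosis of the obstacle are both correct: the conformal invariance of the weighted Dirichlet form is exactly the paper's Lemma~\ref{lemma:conformal:substitution}, and the whole difficulty is that $\varphi = \tilde\varphi\circ\Phi$ need not be Lipschitz on $\overline D$ at the corners. But your proposed workaround does not close that gap. The exhaustion over $D_\eta$ proves only the equality of the two \emph{volume} integrals, $\int_D \sigma\nabla u^\mu\cdot\nabla\varphi \d x = \int_E \tilde\sigma\nabla\tilde u\cdot\nabla\tilde\varphi \d x$ (i.e., it re-derives Lemma~\ref{lemma:conformal:substitution}); no boundary term ever appears in it, so nothing in the limit $\eta\to 0$ produces $\int_{\partial D}\varphi\d\mu$. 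To finish, you would still have to show that \eqref{eq:def:umu} may be tested against the non-Lipschitz $\varphi$, and here the argument of Lemma~\ref{lemma:variation:c1uniform} genuinely breaks down rather than applying ``just as before'': its key estimate bounds the near-boundary contribution by $2C\int_{D\setminus D_\eta}|\nabla u^\mu|\d x$, where $C$ is the uniform gradient bound furnished by the Lipschitz constant (which also dominates $|\nabla\varphi_\epsilon|$ after mollification). For a merely continuous $\varphi$ the mollified gradients blow up near $\partial D$, and since $\nabla u^\mu$ is only $L^1$ there (the point masses of $\mu$ sit on $\partial D$), the strip term is uncontrolled; uniform continuity handles only the convergence $\int_{\partial D}\varphi_\epsilon\d\mu \to \int_{\partial D}\varphi\d\mu$, not the volume term. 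There is also a circularity in your $W^{1,1}$ argument: you transport the background solution via \eqref{eq:u1:Neumann:function}, but in the paper that representation (Lemma~\ref{lemma:u1:Neumann:function}) is itself deduced \emph{from} the theorem you are proving.

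The paper's mechanism for handling the corners is the idea missing from your proposal: Theorem~\ref{thm:conformal:sharp:angle:intermediary} factors $\Phi$ as a chain of conformal maps through intermediate piecewise smooth domains $D'$ and $E'$ and the disk $B$, in such a way that every leg of the chain is $\cont^{1,\alpha'}$ up to the closure in at least one direction. Along a leg whose map is $\cont^1$ up to the boundary, the pullback of a smooth test function is $\cont^1$ on the closed domain, hence Lipschitz, so Lemma~\ref{lemma:variation:c1uniform} and Lemma~\ref{lemma:conformal:substitution} give the forward implication; along a leg where only the \emph{inverse} is smooth, one argues backwards through the uniqueness of the solution (Lemma~\ref{lemma:umu:solution:and:w}): the solution in the target domain pulls back to a solution in the source domain, which must coincide with $u^\mu$. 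Chaining these two implications yields the general statement without ever testing \eqref{eq:def:umu} against a non-Lipschitz function. If you insist on your direct route, note that after splitting $u^\mu = u^\mu_\bg + w^\mu$ the $w^\mu$ part is harmless (since $\varphi\in H^1(D)$ by conformal invariance of the Dirichlet integral, \eqref{eq:def:w} applies); the substantive statement you must then prove is that the \emph{background} problem \eqref{eq:def:u1mu} admits testing against functions in $H^1(D)\cap\cont(\overline D)$ that are smooth inside $D$ --- a fact about Kr\'al's weak solutions that does not follow from uniform continuity alone.
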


\begin{proof}
Let us first study the case when $\Phi \in \cont^1(\overline D)$ (i.e., $\Phi$ can be extended to such a function). 
For any $\tilde \varphi \in \cont^\infty(\overline E)$, let $\varphi = \tilde \varphi \circ \Phi$. Then $\varphi \in \cont^1(\overline D)$ and, by Lemma~\ref{lemma:variation:c1uniform},
\begin{align*}
	\int_{\partial E} \tilde \varphi \d \tilde \mu
	= \int_{\partial D} \varphi \d \mu
	= \int_{D} \sigma \nabla u^\mu \cdot \nabla \varphi \d x
	= \int_{E} \tilde \sigma \nabla \tilde u \cdot \nabla \tilde \varphi \d x,
\end{align*}
where the last step follows from Lemma~\ref{lemma:conformal:substitution}.

Conversely, if $\tilde u$ solves \eqref{eq:pull:back:var}, then it must hold that $\tilde u = u^\mu \circ \Phi^{-1}$, because the solution to \eqref{eq:pull:back:var} is known to be unique (up to addition of a constant function). Thus $\tilde u \circ \Phi$ solves \eqref{eq:def:umu}.

Due to Theorem~\ref{thm:conformal:sharp:angle:intermediary}, there exists, for some $\alpha' > 0$, piecewise $\cont^{1,\alpha'}$ smooth domains $D'$, $E'$ and conformal maps $\Phi_1^D,\Phi_2^D,\Phi_1^E,\Phi_2^E \in \cont^{1,\alpha'}$ such that
\[
	D \stackrel{\Phi_2^D}{\longmapsfrom} D' \stackrel{\Phi_1^D}{\longmapsto} B \stackrel{\Phi_1^E}{\longmapsfrom} E' \stackrel{\Phi_2^E}{\longmapsto} E
\]
and $\Phi = \Phi_2^E \circ (\Phi_1^E)^{-1} \circ \Phi_1^D \circ (\Phi_2^D)^{-1}$. The claimed mapping property is known to hold between any pair of consecutive domains in the above chain since either the relevant conformal map or its inverse is smooth enough. This proves the general claim.
\end{proof}

It follows immediately from the above theorem that $u^\mu_\bg = u^{\tilde \mu}_\bg \circ \Phi$ and $w^{\mu} = u^\mu - u_\bg^\mu = w^{\tilde \mu} \circ \Phi$ transform similarly. Also observe that point current sources are not ``deformed'' by the transformation; if $\mu = \sum_j c_j \delta_{x_j}$, then $\tilde \mu = \sum_j c_j \delta_{\Phi(x_j)}$.

\begin{lemma}\label{lemma:u1:Neumann:function}
Let $\Phi : D \to B$ be a conformal map. The representation formula \eqref{eq:u1:Neumann:function} holds and, furthermore,
\begin{equation}
\label{eq:nabla:u1:Neumann:function}
	\nabla u_\bg^\mu(x) = \int_{\partial D} \nabla_x \left(N(\Phi(x),\Phi(y)) \right) \d \mu(y)
\end{equation}
for any $x \in D$.
\end{lemma}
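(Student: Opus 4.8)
The plan is to reduce the general statement to the case $D=B$ with $\Phi$ the identity, where \eqref{eq:u1:Neumann:function} reads $u^\mu_\bg(x)=\int_{\partial B}N(x,y)\d\mu(y)$, and then to verify this representation directly against the Neumann--Green function. For the reduction I would invoke Theorem~\ref{thm:pull:back:var} with $E=B$: the remark following it gives $u^\mu_\bg=u^{\tilde\mu}_\bg\circ\Phi$, where $\tilde\mu\in\cont'_\diamond(\partial B)$ is the push-forward of $\mu$, and $\tilde\mu(\partial B)=\mu(\partial D)=0$ so $\tilde\mu$ is again mean-free. Since for fixed $x\in D$ the function $z\mapsto N(\Phi(x),z)$ is continuous on $\partial B$ (the interior point $\Phi(x)$ being separated from $\partial B$), the defining property of $\tilde\mu$ yields $\int_{\partial B}N(\Phi(x),z)\d\tilde\mu(z)=\int_{\partial D}N(\Phi(x),\Phi(y))\d\mu(y)$. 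Hence, once the disk case is known, \eqref{eq:u1:Neumann:function} follows for general $D$ by composing with $\Phi$.

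In the disk, set $v(x)=\int_{\partial B}N(x,y)\d\mu(y)$. First I would show $v\in W^{1,1}(B)/\R$ and that its gradient is $\int_{\partial B}\nabla_x N(x,y)\d\mu(y)$: by rotational symmetry $\int_B|\nabla_x N(x,y)|\d x$ is finite and independent of $y\in\partial B$, so Tonelli's theorem furnishes the integrability and legitimizes differentiation under the integral. The crux is the single-source weak identity: for each $y\in\partial B$ and $\varphi\in\cont^\infty(\overline B)$,
\[
	\int_B \nabla_x N(x,y)\cdot\nabla\varphi(x)\d x = \varphi(y)-\frac1{2\pi}\int_{\partial B}\varphi\d s.
\]

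Because $N(\cdot,y)=-\tfrac1\pi\log|\cdot-y|$ is harmonic in $B$ but only of class $W^{1,1}$ (its gradient fails to be square-integrable near $y\in\partial B$), I would prove this identity by excising a ball $B_\epsilon(y)$ and applying Green's first identity on $B\setminus\overline{B_\epsilon(y)}$, then letting $\epsilon\to0$. On $\partial B$ away from $y$ the normal derivative computes to the constant $-\tfrac1{2\pi}$, producing the mean term; the arc $\partial B_\epsilon(y)\cap B$ contributes $\varphi(y)$ in the limit, where the doubled coefficient of $N$ (arising from the image charge at $y^\ast=y$ merging with the source) exactly compensates the fact that only, asymptotically, a half-circle lies inside $B$.

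Integrating the single-source identity against $\d\mu(y)$, using Tonelli/Fubini to interchange and the mean-freeness $\mu(\partial B)=0$ to annihilate the constant term, I obtain $\int_B\nabla v\cdot\nabla\varphi\d x=\int_{\partial B}\varphi\d\mu$ for all $\varphi\in\cont^\infty(\overline B)$, that is, \eqref{eq:def:u1mu}; the uniqueness recorded at the start of Section~\ref{sec:weak:neumann} then identifies $v=u^{\tilde\mu}_\bg$, completing the disk case and hence \eqref{eq:u1:Neumann:function}. Finally, the gradient formula \eqref{eq:nabla:u1:Neumann:function} follows by differentiating the now-established representation under the integral sign: for interior $x\in D$ the singularity of $N$ sits on $\partial B$ at positive distance from $\Phi(x)\in B$, so on compact subsets of $D$ the integrand and its $x$-derivatives are continuous and bounded uniformly in $y$, which is exactly the setting of the standard differentiation-under-the-integral theorem. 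The main obstacle is the excision limit in the single-source identity; the remaining steps are bookkeeping with Fubini and the already-known uniqueness.
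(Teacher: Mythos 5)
Your proposal is correct, and it shares the paper's overall skeleton—settle the unit-disk case first, then transport to $D$ via Theorem~\ref{thm:pull:back:var} and the push-forward measure $\tilde\mu$, whose mean-freeness and action on $N(\Phi(x),\cdot)\in\cont(\partial B)$ you use exactly as the paper does. Where you genuinely diverge is the disk case itself. The paper does not verify the representation directly: it invokes the boundedness of the solution operator $f \mapsto \tilde u_\bg^f|_U : H^{-s}_\diamond(\partial B) \to H^1(U)/\R$ for the \emph{distributional} formulation \eqref{eq:def:u1:distributional} (cited from \cite[Appendix]{convexbss}), combines it with the classical Neumann--Green representation for smooth data and a density argument, and then passes to the weak formulation \eqref{eq:def:u1mu} through the equivalence in Lemma~\ref{lemma:same:u1}. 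You instead verify by hand that $v=\int_{\partial B}N(\cdot,y)\d\mu(y)$ solves \eqref{eq:def:u1mu}: your single-source identity is correct (indeed $\partial_\nu N(\cdot,y)\equiv -\tfrac1{2\pi}$ on $\partial B\setminus\{y\}$, and the factor-two coefficient of $-\tfrac1\pi\log|\cdot-y|$ against the asymptotic half-circle of $\partial B_\epsilon(y)\cap B$ yields exactly $\varphi(y)$), the Tonelli bound $\int_B|\nabla_x N(x,y)|\d x\leq C$ uniformly in $y\in\partial B$ justifies both the Fubini interchange and the identification of the weak gradient (via integration by parts against test functions for fixed $y$, since $N(\cdot,y)$ is smooth inside $B$), and the uniqueness you appeal to is precisely the Kr\'al result recorded at the start of Section~\ref{sec:weak:neumann}. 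What each route buys: yours is self-contained and never needs the distributional Sobolev framework, Lemma~\ref{lemma:same:u1}, or the external solution-operator result—at the price of the explicit excision computation; the paper's route is shorter in context because the cited machinery is reused elsewhere (e.g., in Lemma~\ref{lemma:same:NtoD:definitions} and Theorem~\ref{thm:NtoD:factorization}). Your concluding derivation of \eqref{eq:nabla:u1:Neumann:function} by differentiating under the integral on compact subsets of $D$ is sound, since $\Phi(K)$ stays at positive distance from $\partial B$ and $u_\bg^\mu$ is harmonic, hence smooth, in $D$.
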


\begin{proof}
Let $U \csubset B$ be arbitrary. The explicit representation \eqref{eq:unit:disk:Neumann:function} shows that $\sup_{x \in U, y \in \partial B} |N(x,y)|, \sup_{x \in U, y \in \partial B} |\nabla_x N(x,y)| < \infty$ and it is known that the operator $f \mapsto \tilde u^f_\bg|_U : H^{-s}_\diamond(\partial B) \to H^1(U)/\R$, $s \in \R$, where $\tilde u_\bg^f = u_\bg^f$ solves \eqref{eq:def:u1:distributional} in $D = B$, is bounded \cite[Appendix]{convexbss}. It follows by a straightforward density argument
that
\begin{equation}
\label{eq:u1:Neumann:function:B}
	\langle N(x,\cdot), f \rangle, \qquad x \in U,
\end{equation}
defines a representative of $\tilde u_\bg^f|_U$ and
$
	\langle \nabla_x N(x,\cdot), f \rangle,\ x \in U,
$
defines $\nabla \tilde u_\bg^f|_U$. Since $U$ was arbitrary, the above expressions remain valid formulas for (a representative of) $\tilde u_\bg^f$ and $\nabla \tilde u^f$ in the whole disk $B$.

The general claim for an arbitrary piecewise $\cont^{1,\alpha}$ smooth $D$ now follows from Lemma~\ref{lemma:same:u1} %
and Theorem~\ref{thm:pull:back:var},
since
\[
	u_\bg^\mu
	= u^{\tilde \mu}_\bg \circ \Phi
	= \int_{\partial B} N(\Phi(\cdot),y) \d \tilde \mu(y)
	= \int_{\partial D} N(\Phi(\cdot),\Phi(y)) \d \mu(y)
\]
modulo constant functions.
\end{proof}

\begin{remark}
The representative given by \eqref{eq:u1:Neumann:function:B} (for $f \in \cont(\partial B)$) has zero mean on the unit circle $\int_{\partial B} \tilde u_\bg^f \d s = 0$. Correspondingly, the representative given by \eqref{eq:u1:Neumann:function} (for any $g \in \cont(\partial D)$) satisfies
\[
	\int_{\partial B} \tilde u_\bg^g \circ \Phi^{-1} \d s = 0,
\]
and its mean does not generally vanish on $\partial D$. Thus $N(\Phi(\cdot),\Phi(\cdot))$ is not the usual Neumann--Green function of $D$ (cf., e.g., \cite{bruhl2001factorization}), %
but induces a different normalization criterion (``ground level'') for $u_\bg$.
\end{remark}

The rest of this section focuses on proving the claimed properties of $\Lambda_\sigma - \Lambda_\bg$ in Theorem~\ref{thm:NtoD:continuity}.

\begin{theorem}[Factorization]\label{thm:NtoD:factorization}
Let $U \csubset D$ be an open set such that $\supp(1-\sigma) \subset U$. Then for any $\mu, \eta \in \cont'_\diamond(\partial D)$,
\begin{subequations}
\begin{align}
\label{eq:factorization:G}
	Q_\sigma(\mu,\eta)
	&= \int_U (SG \mu) \cdot (G \eta) \d x, \\
\label{eq:factorization:A}
	&= \int_{D} (1-\sigma) \nabla(A\eta) \cdot \nabla((I+P)A\mu) \d x,
\end{align}
\end{subequations}
where the operators
\begin{align*}
	G &: \cont'_\diamond(\partial D) \to (L^2(U))^2, \quad
	\mu \mapsto \nabla u_\bg^\mu|_U, \\
	A &: \cont'_\diamond(\partial D) \to H^1(U)/\R, \quad \mu \mapsto u_\bg^\mu|_U, \\
	P &: H^1(U)/\R \to H^1(D)/\R, \quad u_\bg^\mu|_U \mapsto w^\mu,
\end{align*}
and $S : (L^2(U))^2 \to (L^2(U))^2$ are linear and bounded. In addition, $G$ is continuous between the weak* topology of $ \cont'_\diamond(\partial D)$ and the strong topology of $(L^2(U))^2$. 
\end{theorem}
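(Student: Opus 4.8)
The plan is to read both factorizations directly off the single identity
$Q_\sigma(\mu,\eta) = \int_U (1-\sigma)\,\nabla u^\mu \cdot \nabla u^\eta_\bg \d x$, which follows from the definition of $Q_\sigma$ in \eqref{eq:def:relative:NtoD:and:Q}, the decomposition $u^\mu = u^\mu_\bg + w^\mu$ of Lemma~\ref{lemma:umu:solution:and:w}, and the fact that $\supp(1-\sigma) \subset U$ permits restricting the integral from $D$ to $U$. The remaining work is to construct the four operators, verify their boundedness, and prove the weak*-to-strong continuity of $G$.

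First I would establish the mapping properties of $G$ and $A$ from the representation of Lemma~\ref{lemma:u1:Neumann:function}, $\nabla u^\mu_\bg(x) = \int_{\partial D}\nabla_x\bigl(N(\Phi(x),\Phi(y))\bigr)\d\mu(y)$. Since $U \csubset D$ and $\Phi$ is a homeomorphism of $\overline D$ onto $\overline B$ that is analytic in $D$, the image $\Phi(U)$ has compact closure in $B$ and $\Phi'$ is bounded on $U$; hence the kernel $(x,y)\mapsto\nabla_x\bigl(N(\Phi(x),\Phi(y))\bigr)$ is bounded on $U\times\partial D$. This gives $|\nabla u^\mu_\bg(x)|\le C\|\mu\|_{\cont'(\partial D)}$ uniformly in $x\in U$, and since $U$ is bounded, $\|G\mu\|_{L^2(U)}\le C'\|\mu\|_{\cont'(\partial D)}$. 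Boundedness of $A$ then follows from the Poincaré inequality on $U$, under which $\|\cdot\|_{H^1(U)/\R}$ is equivalent to the $L^2(U)$-norm of the gradient, together with the analogous pointwise bound on $N$ itself.

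The main obstacle, and the only non-routine step, is the weak*-to-strong continuity of $G$. Suppose $\mu_j \wto \mu$ in $\cont'_\diamond(\partial D)$. For each fixed $x\in U$ the map $y\mapsto\nabla_x\bigl(N(\Phi(x),\Phi(y))\bigr)$ is continuous on $\partial D$ (as $\Phi(x)$ lies in the interior $B$ while $\Phi(y)\in\partial B$, so no diagonal singularity occurs), and its components define admissible test elements of $\cont(\partial D)/\R$; hence $\nabla u^{\mu_j}_\bg(x)\to\nabla u^\mu_\bg(x)$ pointwise. Weak* convergent sequences in the dual of the Banach space $\cont(\partial D)/\R$ are norm-bounded by the uniform boundedness principle, so $\sup_j\|\mu_j\|_{\cont'(\partial D)}<\infty$, and the uniform pointwise estimate above furnishes a constant, hence $L^2(U)$, majorant. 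Dominated convergence then upgrades the pointwise convergence to $G\mu_j\to G\mu$ in $(L^2(U))^2$, which is the asserted continuity.

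It remains to construct $P$ and $S$. For $v\in H^1(U)/\R$ I define $Pv\in H^1(D)/\R$ as the unique solution, obtained by Lax--Milgram from the coercivity underlying \eqref{eq:def:w}, of $\int_D\sigma\nabla(Pv)\cdot\nabla\varphi\d x=\int_U(1-\sigma)\nabla v\cdot\nabla\varphi\d x$ for all $\varphi\in H^1(D)/\R$; comparison with \eqref{eq:def:w} and $\supp(1-\sigma)\subset U$ gives $P(u^\mu_\bg|_U)=w^\mu$, while the a priori estimate yields $\|Pv\|_{H^1(D)/\R}\le C\|\nabla v\|_{L^2(U)}$, so $P$ is bounded. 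Feeding an arbitrary $g\in(L^2(U))^2$ into the same right-hand side defines a bounded $\tilde P:(L^2(U))^2\to(L^2(U))^2$, $g\mapsto\nabla(Rg)|_U$, with $\tilde P(G\mu)=\nabla w^\mu|_U$; then $Sg:=(1-\sigma)(g+\tilde Pg)$ is bounded on $(L^2(U))^2$ and satisfies $SG\mu=(1-\sigma)\nabla u^\mu|_U$. Substituting into the opening identity gives \eqref{eq:factorization:G}, whereas writing $\nabla u^\mu|_U=\nabla((I+P)A\mu)$ and $\nabla u^\eta_\bg|_U=\nabla(A\eta)$ and extending the integral back to $D$ (permissible since $\supp(1-\sigma)\subset U$) gives \eqref{eq:factorization:A}; here $(I+P)A\mu$ is understood on $U$, where $A\mu=u^\mu_\bg|_U$ and $(PA\mu)|_U=w^\mu|_U$, so that its gradient equals $\nabla u^\mu$ there.
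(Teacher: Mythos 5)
Your proposal is correct and follows essentially the same route as the paper's proof: boundedness of $G$ and $A$ via the explicit Neumann--Green kernel bounds of Lemma~\ref{lemma:u1:Neumann:function}, weak*-to-strong continuity of $G$ by pointwise convergence plus norm-boundedness of weak* convergent sequences and dominated convergence, and construction of $P$ and $S$ by composing the Lax--Milgram solution operator for \eqref{eq:def:w} with the right-hand-side functional maps (your $\tilde P$ and $R$ are the paper's $\nabla T\tilde F$ and $T\tilde F$). The only cosmetic difference is notational, so no further changes are needed.
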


\begin{proof}
The continuity of the (well-defined) operators $G$ and $A$ follow from Lem\-ma~\ref{lemma:u1:Neumann:function}:
\[
	| (A \mu)(x) | = \left|\int_{\partial D} N(\Phi(x),\Phi(y)) \d \mu(y)\right|
	\leq \sup_{\tilde x \in \Phi(U),\; y \in \partial B} |N(\tilde x,y)| \cdot \| \mu \|_{\cont'_\diamond(\partial D)},
\]
which is finite, as seen from the explicit formula \eqref{eq:unit:disk:Neumann:function}. Similarly, $|(G\mu)(x)| = |\nabla_x (A \mu)(x)| \leq C \|\mu \|_{\cont'_\diamond(\partial D)}$.
Furthermore, if $\mu_j \wto \mu$, then $(G \mu_j)(x)$ converges pointwise for $x \in U$ by \eqref{eq:nabla:u1:Neumann:function} and
\[
	|(G \mu_j)(x)| \leq \sup_{x \in U,\; y \in \partial D}  |\nabla_x N(\Phi(x),\Phi(y))| \cdot \| \mu_j\|_{\cont'(\partial D)} \leq C,
\]
since since a weak* convergent sequence is strongly bounded. Therefore $|(G \mu_j)(x) - (G \mu)(x)|^2$ is uniformly bounded for $x \in U$ and it follows from the dominated convergence theorem that $G \mu_j \to G \mu$ in $(L^2(U))^2$.

Now consider the variational problem
\begin{equation*}
	\int_D \sigma \nabla w \cdot \nabla v \d x = f(v) \qquad \mbox{ for all }v \in H^1(D)/\R.
\end{equation*}
As in the proof of Lemma~\ref{lemma:umu:solution:and:w}, for any $f \in (H^1(D)/\R)'$, there exists a unique solution $w \in H^1(D)/\R$ and the mapping $T : (H^1(D)/\R)' \to H^1(D)/\R, f \mapsto w$ is continuous.

Set $F : H^1(U)/\R \to (H^1(D)/\R)'$, $g \mapsto (v \mapsto \int_U (1-\sigma) \nabla g \cdot \nabla v \d x)$ and $P = TF : H^1(U)/\R \to H^1(D)/\R$, both of which are bounded. Then
\[
	w^\mu = T[v \mapsto \textstyle\int_U (1-\sigma) \nabla u_\bg^\mu \cdot \nabla v \d x] = T F u_\bg^\mu|_U = P A \mu.
\]
It follows from definition \eqref{eq:def:relative:NtoD:and:Q} and Lemma~\ref{lemma:umu:solution:and:w} that
\[
\begin{split}
	Q_\sigma(\mu,\eta)
	&= \int_U (1 - \sigma)\nabla (u_\bg^\mu + w^\mu) \cdot \nabla u_\bg^\eta \d x \\
	&= \int_U (1 - \sigma)\nabla ((I+P) A \mu) \cdot \nabla (A \eta) \d x.
\end{split}
\]

Similarly, set $\tilde F : (L^2(U))^2 \to (H^1(D)/\R)'$, $g \mapsto (v \mapsto \int_U (1-\sigma) g \cdot \nabla v \d x)$, whence
\[
	w^\mu = T \tilde F \nabla u_\bg^\mu|_U = T \tilde F G \mu
\]
and the factorization \eqref{eq:factorization:G} holds for $S = (1-\sigma)(I + \nabla T\tilde F)$.
\end{proof}

\begin{lemma}\label{lemma:relative:NtoD:self:duality}
The bilinear form $Q_\sigma$ is symmetric.
\end{lemma}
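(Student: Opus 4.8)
The plan is to exploit the decomposition $u^\mu = u^\mu_\bg + w^\mu$ from Lemma~\ref{lemma:umu:solution:and:w}, which reduces the symmetry of $Q_\sigma$ to a symmetry statement for the correction terms $w^\mu$. Starting from the definition \eqref{eq:def:relative:NtoD:and:Q}, I would write
\[
	Q_\sigma(\mu,\eta)
	= \int_D (1-\sigma)\nabla u^\mu \cdot \nabla u^\eta_\bg \d x
	= \int_D (1-\sigma)\nabla u^\mu_\bg \cdot \nabla u^\eta_\bg \d x
	+ \int_D (1-\sigma)\nabla w^\mu \cdot \nabla u^\eta_\bg \d x.
\]
The first integral is manifestly invariant under interchanging $\mu$ and $\eta$, so the whole task is to establish symmetry of the cross term. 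Note that all integrands vanish outside $\supp(1-\sigma) \csubset D$, where $u^\mu_\bg$ and $u^\eta_\bg$ are smooth and $w^\mu, w^\eta \in H^1(D)/\R$, so every integral below is finite.

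The key observation is that the cross term can be re-expressed using the variational identity \eqref{eq:def:w} defining $w^\eta$. Since $w^\mu \in H^1(D)/\R$ is an admissible test function, testing the equation for $w^\eta$ against $\varphi = w^\mu$ gives
\[
	\int_D (1-\sigma)\nabla u^\eta_\bg \cdot \nabla w^\mu \d x
	= \int_D \sigma \nabla w^\eta \cdot \nabla w^\mu \d x.
\]
The right-hand side is visibly symmetric in $\mu$ and $\eta$. Testing the equation for $w^\mu$ against $\varphi = w^\eta$ yields the same symmetric quantity $\int_D \sigma \nabla w^\mu \cdot \nabla w^\eta \d x$, and therefore
\[
	\int_D (1-\sigma)\nabla w^\mu \cdot \nabla u^\eta_\bg \d x
	= \int_D (1-\sigma)\nabla w^\eta \cdot \nabla u^\mu_\bg \d x.
\]
Combining this with the symmetry of the first integral gives $Q_\sigma(\mu,\eta) = Q_\sigma(\eta,\mu)$.

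I do not anticipate a genuine obstacle here; the argument is a clean application of the weak formulation. The only point requiring a moment of care is confirming that $w^\mu$ and $w^\eta$ are legitimate test functions in \eqref{eq:def:w}, which follows immediately since they lie in $H^1(D)/\R$ by Lemma~\ref{lemma:umu:solution:and:w}. The conceptual content is simply that the ``bilinear energy'' $\int_D \sigma \nabla w^\mu \cdot \nabla w^\eta \d x$ furnishes a common symmetric pivot between the two representations of the cross term.
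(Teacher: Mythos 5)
Your proof is correct, but it takes a genuinely different route from the paper. The paper proves symmetry by conformal transplantation: it maps everything to the unit disk via Theorem~\ref{thm:pull:back:var} and the substitution formula of Lemma~\ref{lemma:conformal:substitution}, and there invokes Lemma~\ref{lemma:same:NtoD:definitions}, whose ``Green's formulas'' (established first for smooth boundary data via the classical $H^{-1/2}$ theory and then extended by continuity) supply the interchange of $\mu$ and $\eta$; the result is then pulled back to $D$. You instead work directly in the piecewise smooth domain, using only Lemma~\ref{lemma:umu:solution:and:w}: splitting $u^\mu = u^\mu_\bg + w^\mu$ and testing \eqref{eq:def:w} for $w^\eta$ against $\varphi = w^\mu$ (legitimate since $w^\mu \in H^1(D)/\R$) yields the manifestly symmetric representation
\begin{equation*}
	Q_\sigma(\mu,\eta) = \int_D (1-\sigma)\nabla u^\mu_\bg \cdot \nabla u^\eta_\bg \d x + \int_D \sigma \nabla w^\mu \cdot \nabla w^\eta \d x,
\end{equation*}
from which the claim is immediate. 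Your argument is more elementary and more self-contained: it needs neither conformal maps nor the smooth-domain Sobolev theory, and it would survive verbatim in any domain where the weak problem \eqref{eq:def:umu} admits the decomposition of Lemma~\ref{lemma:umu:solution:and:w} --- consistent with the paper's closing remark that the results other than Theorem~\ref{thm:pull:back:var} should extend to general Lipschitz domains. What the paper's route buys is architectural economy: Lemma~\ref{lemma:same:NtoD:definitions} and Theorem~\ref{thm:pull:back:var} are needed elsewhere anyway, so symmetry comes along at no extra conceptual cost, and the proof simultaneously records the conformal invariance of $Q_\sigma$, a fact reused throughout Section~\ref{sec:proofs}.
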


\begin{proof}
Let $\Phi : D \to B$ be a conformal map. Due to Theorem~\ref{thm:pull:back:var} and integration by substitution (cf. equation \ref{eq:conformal:substitution:proof}), %
\begin{align*}
	Q_\sigma(\mu,\eta)
	= \int_D (1-\sigma) \nabla u^\mu \cdot \nabla u^\eta_\bg \d x
	= \int_B (1-\tilde \sigma) \nabla u^{\tilde \mu} \cdot \nabla u^{\tilde \eta}_\bg \d x
\end{align*}
where $\tilde \sigma = \sigma \circ \Phi^{-1}$
and $\tilde \mu, \tilde \eta$ are the weak Neumann boundary values of $u^\mu \circ \Phi^{-1}$ and $u_\bg^\eta \circ \Phi^{-1}$, respectively.
By Lemma~\ref{lemma:same:NtoD:definitions}, this equals
\[
	\int_B (1-\tilde \sigma) \nabla u_\bg^{\tilde \mu} \cdot \nabla u^{\tilde \eta} \d x
	= \int_D (1-\sigma) \nabla u_\bg^\mu \cdot \nabla u^\eta \d x
	= Q_\sigma(\eta,\mu)
	.
\qedhere
\]
\end{proof}

So far, we have shown that $Q_\sigma$ is well-defined and bounded. However, it remains to prove that, for each $\mu \in \cont'_\diamond(\partial D)$, there exists a unique $(\Lambda_\sigma - \Lambda_\bg)\mu \in \cont(\partial D)/\R$ such that \eqref{eq:def:relative:NtoD:and:Q} is satisfied.

\begin{lemma}[Continuity of $(\Lambda_\sigma - \Lambda_\bg) \mu$]\label{lemma:relative:NtoD:is:trace:of:w}
Let $\mu \in \cont'_\diamond(\partial D)$ and $w^\mu$ be as in \eqref{eq:def:w}. Then $(\Lambda_\sigma - \Lambda_\bg)\mu = \gamma w^\mu \in \cont(\partial D)/\R$.
\end{lemma}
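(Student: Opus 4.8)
The plan is to establish two separate facts about $\gamma w^\mu$: first, that it actually lies in $\cont(\partial D)/\R$ and not merely in $H^{1/2}(\partial D)/\R$; and second, that it satisfies the defining relation \eqref{eq:def:relative:NtoD:and:Q}, i.e.\ $\int_{\partial D}[\gamma w^\mu]\d\eta = Q_\sigma(\mu,\eta)$ for every $\eta\in\cont'_\diamond(\partial D)$. Since $\cont'_\diamond(\partial D)$ is the dual of $\cont(\partial D)/\R$ and hence separates its points, these two facts together identify $\gamma w^\mu$ with the element $(\Lambda_\sigma-\Lambda_\bg)\mu$ of Theorem~\ref{thm:NtoD:continuity}.

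For the \emph{continuity}, the key observation is that \eqref{eq:def:w} has no boundary term and that $\supp(1-\sigma)\csubset D$. Testing \eqref{eq:def:w} against functions supported in $D\setminus U$, where $\sigma\equiv1$ and $1-\sigma\equiv0$, shows that $w^\mu$ is harmonic in $D\setminus\overline U$ and has vanishing normal derivative on $\partial D$ in the weak sense. By the remark following Theorem~\ref{thm:pull:back:var} one may write $w^\mu = w^{\tilde\mu}\circ\Phi$ for a conformal map $\Phi:D\to B$; since $\supp(1-\tilde\sigma)=\Phi(\supp(1-\sigma))\csubset B$, the transported function $w^{\tilde\mu}$ is harmonic in an annular neighbourhood of $\partial B$ with homogeneous Neumann data on the \emph{smooth} curve $\partial B$. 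Standard boundary regularity for the Neumann problem on $B$ (equivalently, Schwarz reflection across $\partial B$ through the inversion $z\mapsto 1/\bar z$) then makes $w^{\tilde\mu}$ real-analytic up to $\partial B$, so $\gamma w^{\tilde\mu}\in\cont^\infty(\partial B)$. Finally, as $\partial D$ is a Jordan curve, $\Phi$ extends to a homeomorphism $\overline D\to\overline B$, whence $\gamma w^\mu=(\gamma w^{\tilde\mu})\circ(\Phi|_{\partial D})\in\cont(\partial D)/\R$.

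For the \emph{defining relation}, fix $\eta$ and choose $\eta_j\in\cont_\diamond(\partial D)$ with $\eta_j\wto\eta$. For each \emph{smooth} density $\eta_j$ the background solution $u^{\eta_j}_\bg$ is a genuine element of $H^1(D)/\R$, so it is an admissible test function in \eqref{eq:def:w}, while $w^\mu\in H^1(D)/\R$ is admissible in the $H^1$ weak formulation of the background problem with datum $\eta_j$. These two choices give
\begin{equation*}
\int_D \sigma \nabla w^\mu \cdot \nabla u^{\eta_j}_\bg \d x = \int_D (1-\sigma) \nabla u^\mu_\bg \cdot \nabla u^{\eta_j}_\bg \d x, \qquad \int_D \nabla u^{\eta_j}_\bg \cdot \nabla w^\mu \d x = \int_{\partial D} [\gamma w^\mu]\, \eta_j \d s .
\end{equation*}
Splitting $\nabla u^{\eta_j}_\bg \cdot \nabla w^\mu = \sigma\,\nabla u^{\eta_j}_\bg \cdot \nabla w^\mu + (1-\sigma)\,\nabla u^{\eta_j}_\bg \cdot \nabla w^\mu$, inserting the first identity, and using $u^\mu=u^\mu_\bg+w^\mu$ collapses the right-hand side of the second identity to $\int_D (1-\sigma)\nabla u^\mu\cdot\nabla u^{\eta_j}_\bg \d x = Q_\sigma(\mu,\eta_j)$. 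Thus $\int_{\partial D}[\gamma w^\mu]\,\eta_j\d s = Q_\sigma(\mu,\eta_j)$. Passing to the limit, the left-hand side tends to $\int_{\partial D}[\gamma w^\mu]\d\eta$ because $\gamma w^\mu\in\cont(\partial D)/\R$ and $\eta_j\wto\eta$, while the right-hand side tends to $Q_\sigma(\mu,\eta)$ because the factorization \eqref{eq:factorization:G} expresses $Q_\sigma(\mu,\cdot)$ as $\int_U(SG\mu)\cdot(G\,\cdot)\d x$ and $G$ is weak*-to-strong continuous (Theorem~\ref{thm:NtoD:factorization}). This yields the required relation.

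The main obstacle is the continuity step. Because $\partial D$ is only piecewise $\cont^{1,\alpha}$, boundary elliptic regularity cannot be applied directly near the corners, so the trace of $w^\mu$ is not \emph{a priori} continuous there. The device of transporting the problem to the disk—where the image function is smooth up to the (smooth) boundary—and then pulling the resulting continuous trace back through the continuous boundary homeomorphism $\Phi|_{\partial D}$ is what circumvents this difficulty; the remaining algebra in the identity step is routine.
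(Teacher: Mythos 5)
Your proof is correct, and its two-step structure (continuity of $\gamma w^\mu$ via conformal transport to the disk, then verification of the defining relation on smooth densities followed by a weak* limit) mirrors the paper's proof almost exactly: the paper likewise writes $w^\mu = w^{\tilde\mu}\circ\Phi$, notes that $w^{\tilde\mu}$ is harmonic near $\partial B$ with homogeneous Neumann data and hence smooth up to the smooth boundary, pulls the continuous trace back through the boundary homeomorphism, and then passes to the limit $\eta_j \wto \eta$ using the weak*-to-strong continuity of $G$ from Theorem~\ref{thm:NtoD:factorization}. The one genuine difference is how the identity $\int_{\partial D}[\gamma w^\mu]\,\eta_j \d s = Q_\sigma(\mu,\eta_j)$ is obtained for smooth densities: the paper tests the conductivity equation for $u^{\eta_j}$ (with conductivity $\sigma$) against $w^\mu$ and tests \eqref{eq:def:w} against $u^{\eta_j}$, which yields $Q_\sigma(\eta_j,\mu)$ and therefore requires an appeal to the symmetry of $Q_\sigma$ (Lemma~\ref{lemma:relative:NtoD:self:duality}, itself proven via conformal transport and Lemma~\ref{lemma:same:NtoD:definitions}); you instead test against the \emph{background} solution $u^{\eta_j}_\bg$ and split $\nabla u^{\eta_j}_\bg\cdot\nabla w^\mu$ into its $\sigma$ and $(1-\sigma)$ parts, which reassembles directly into $\int_D(1-\sigma)\nabla u^\mu\cdot\nabla u^{\eta_j}_\bg\d x = Q_\sigma(\mu,\eta_j)$. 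Your variant is slightly more economical, since it makes this lemma independent of the symmetry result, while the paper's route makes the lemma depend on it; both are legitimate, as Lemma~\ref{lemma:relative:NtoD:self:duality} precedes this lemma in the paper's logical order.
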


\begin{proof}
Let $\Phi$, $\tilde \mu$, and $\tilde \sigma$ be as in Theorem~\ref{thm:pull:back:var}. Since $w^{\tilde \mu}$ (as in equation \ref{eq:def:w} with $D=B$ and $\sigma = \tilde \sigma$), is harmonic near the boundary $\partial B$ and satisfies a homogeneous Neumann condition, $w^{\tilde \mu}|_{\overline{B \setminus U}} \in \cont^\infty(\overline{ B \setminus U })/\R$ for some $U \csubset B$. 
By Theorem~\ref{thm:pull:back:var}, $w^\mu = w^{\tilde \mu} \circ \Phi$ where $\Phi \in \cont(\overline D)$. Therefore the trace coincides with the pointwise limit of a uniformly continuous function (equivalence class) $w^\mu|_{\overline{D \setminus \Phi^{-1}(U)}}$ on the boundary $\partial D$ and $\gamma w^\mu \in \cont(\partial D)/\R$.

For an arbitrary $f \in \cont_\diamond(\partial D) \subset H^{-1/2}_\diamond(\partial D)$,
\begin{equation*}
\begin{split}
	\int_{\partial D} \gamma w^\mu f \d s &= \langle f, \gamma w^\mu \rangle
	= \int_D \sigma \nabla w^\mu \cdot \nabla u^f \d x \\
	&= \int_D (1-\sigma) \nabla u_\bg^\mu \cdot \nabla u^f \d x 
	= Q_\sigma(f,\mu) = Q_\sigma(\mu,f).
\end{split}
\end{equation*}
It now follows from Theorem~\ref{thm:NtoD:factorization} and the weak* density of $\cont_\diamond(\partial D)$ in $\cont'_\diamond(\partial D)$ that for any $\eta \in \cont'_\diamond(\partial D)$, 
\[
	\int_D \gamma w^\mu \d \eta
	= \lim_{\eta_j \wto \eta} \int_D \gamma w^\mu \eta_j \d s
	= \lim_{\eta_j \wto \eta} Q_\sigma(\mu,\eta_j)
	= Q_\sigma(\mu,\eta).
\]
Evaluating $\int_{\partial D} [(\Lambda_\sigma-\Lambda_\bg)\mu] \d (\delta_x - \delta_y)$ for all $x,y \in \partial D$ also shows that $(\Lambda_\sigma-\Lambda_\bg)\mu : \partial D \to \R$ is unique up to a constant.
\end{proof}

The continuity of $\Lambda_\sigma - \Lambda_\bg : \cont'_\diamond(\partial D) \to \cont(\partial D)/\R$, as well as \eqref{eq:NtoD:weak:convergence}, follow from Theorem~\ref{thm:NtoD:factorization}, Lemma~\ref{lemma:relative:NtoD:self:duality}, and the boundedness of weak* convergent sequences.

\subsection{Bisweep data}

It is known by Lemma~\ref{lemma:same:NtoD:definitions} that, in the unit disk $B$, Definition~\ref{def:bisweep:data} coincides with the definition of bisweep data in \cite{bisweep}. The identity \eqref{eq:bisweep:conformal} follows directly from Theorem~\ref{thm:pull:back:var} and thus we are ready to state:

\begin{proof}[Proof of Theorem~\ref{thm:partial:calderon}]
Let $\Phi$ be a conformal map from $D$ to the unit disk $B$ and $\tilde \Gamma = \Phi(\Gamma)$. By \cite[Corollary 2.3 \& Remark 2.4]{bisweep}, the knowledge of $\tilde \varsigma(\tilde x, \tilde y)$ for all $(\tilde x, \tilde y) \in \tilde \Gamma \times \tilde \Gamma$ determines the unit disk conductivity $\tilde \sigma$ and $\sigma$ is recovered as $\tilde \sigma \circ \Phi$.
\end{proof}

The cornerstones of these results are the solvability of Calderón problem with full data \cite{astalapaivarinta2006} and the analyticity of bisweep data, which is also proven in \cite{bisweep}.
For completeness, we will give a (slightly extended) proof of this fact, which will also be utilized in the proofs of Theorems \ref{thm:partial:calderon:w} and \ref{thm:bisweep:NtoD:kernel}. Here the ``interior domain'' factorization \eqref{eq:factorization:G} is utilized instead of the transmission-problem-based factorizations of the Neumann-to-Dirichlet map in \cite{sweep}, \cite{multisweep}, \cite{bisweep} and \cite{convexbss}.

\begin{theorem}\label{thm:four:electrode:function}
Let $D = B$ be the unit disk and $\delta(\theta) := \delta_{(\cos \theta, \sin \theta)}$.
The function
\[
	\tilde \omega(\theta_1,\theta_2,\theta_3,\theta_4) = Q_\sigma(\delta(\theta_1) - \delta(\theta_2), \delta(\theta_3) - \delta(\theta_4))
\]
is (jointly) analytic in $\R^4$. Furthermore, it extends to a complex analytic function in $V^4 \subset \C^4$, where $V = \{ z \in \C : |\Im z| < l \}$ for some $l > 0$.
\end{theorem}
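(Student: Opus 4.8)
The plan is to reduce everything to the ``interior'' factorization \eqref{eq:factorization:G} together with an explicit, \emph{branch-free} formula for the kernel $\nabla_x N$. Fix an open set with $\supp(1-\sigma) \subset U \csubset B$ and, shrinking if necessary, arrange $\overline U \subset \{ |x| \le r \}$ for some $r < 1$; such $U$ exists by \eqref{eq:sigma:boundary:homogeneity}. Applying Theorem~\ref{thm:NtoD:factorization} with $\Phi = \mathrm{id}$ gives, for $\mu,\eta \in \cont'_\diamond(\partial B)$,
\[
	Q_\sigma(\mu,\eta) = \int_U (SG\mu)\cdot(G\eta) \d x,
\]
where $S$ is bounded on $(L^2(U))^2$ and, by Lemma~\ref{lemma:u1:Neumann:function}, $G\mu = \nabla u^\mu_\bg|_U$ is given on $U$ by \eqref{eq:nabla:u1:Neumann:function}. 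Identifying $\R^2 \cong \C$, it is convenient to set $g(x,\theta) := \nabla_x N(x, e^{i\theta})$, so that $G(\delta(\theta_1) - \delta(\theta_2)) = g(\cdot,\theta_1) - g(\cdot,\theta_2)$ by linearity.

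First I would record an explicit formula for $g$. Since $|e^{i\theta}| = 1$ for real $\theta$, the expression \eqref{eq:unit:disk:Neumann:function} collapses to $N(x,e^{i\theta}) = -\frac1\pi \log|e^{i\theta}-x|$, and differentiating the square yields the \emph{rational} expression
\[
	g(x,\theta) = -\frac1\pi\,\frac{(x_1 - \cos\theta,\; x_2 - \sin\theta)}{P(x,\theta)}, \qquad P(x,\theta) := 1 - 2(x_1\cos\theta + x_2\sin\theta) + |x|^2.
\]
The crucial observation is that both numerator and denominator are \emph{entire} in $\theta$ (through $\cos\theta,\sin\theta$) and polynomial in $x$; the troublesome logarithm, and its branch, disappear upon taking the gradient. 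Writing $z = x_1 + i x_2$, one has the polynomial identity $P(x,\theta) = (e^{i\theta} - z)(e^{-i\theta} - \bar z)$, valid for all complex $\theta$, so $P(x,\theta) = 0$ forces $|e^{\pm i\theta}| = |x| \le r$, i.e.\ $|\Im \theta| \ge \log(1/r)$. Hence, with $l := \log(1/r) > 0$ and $V := \{ |\Im \theta| < l \}$, the denominator stays nonzero for all $x \in \overline U$ and $\theta \in V$, and $g(x,\theta)$ is jointly holomorphic in $\theta \in V$ there.

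The next step is to upgrade this pointwise holomorphy to holomorphy of the $(L^2(U))^2$-valued map $\theta \mapsto g(\cdot,\theta)$ on $V$. On any substrip $\{ |\Im\theta| \le l' \}$ with $l' < l$ one has $e^{-l'} > r$, whence $|P(x,\theta)| \ge (e^{-l'} - r)^2 > 0$ uniformly in $x \in \overline U$; thus $g(\cdot,\theta) \in (L^\infty(U))^2 \subset (L^2(U))^2$ with a locally uniform bound. Vector-valued holomorphy then follows from the scalar holomorphy of $\theta \mapsto g(x,\theta)$ and this local bound: for every $\phi \in (L^2(U))^2$ the function $\theta \mapsto \int_U g(x,\theta)\cdot \phi(x) \d x$ is holomorphic by Morera's theorem and Fubini (the uniform bound legitimizes interchanging the $x$-integral with contour integrals in $\theta$), so $\theta \mapsto g(\cdot,\theta)$ is weakly, hence strongly, holomorphic into the Hilbert space $(L^2(U))^2$.

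Finally I would assemble the pieces. Complexifying $S$ to a bounded $\C$-linear operator, the form $\beta(v,w) := \int_U (Sv)\cdot w \d x$ is a bounded (complex) bilinear form on $(L^2(U))^2 \times (L^2(U))^2$, and any such form is itself jointly holomorphic. Since
\[
	\tilde\omega(\theta_1,\theta_2,\theta_3,\theta_4) = \beta\bigl(g(\cdot,\theta_1) - g(\cdot,\theta_2),\; g(\cdot,\theta_3) - g(\cdot,\theta_4)\bigr)
\]
is a composition of the holomorphic maps $\theta_i \mapsto g(\cdot,\theta_i) : V \to (L^2(U))^2$ with $\beta$, it is jointly holomorphic on $V^4 \subset \C^4$. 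Restricting to real arguments, and using $\R^4 \subset V^4$, gives the claimed joint real-analyticity on $\R^4$. I expect the main obstacle to be the careful bookkeeping in the middle two steps — pinning down the strip width $l$ from the non-vanishing of $P$ and passing from scalar to $(L^2(U))^2$-valued holomorphy; once these are in place, the analyticity of $\tilde\omega$ is a formal consequence of composing the holomorphic source maps with the continuous bilinear form $\beta$.
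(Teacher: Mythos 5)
Your proposal is correct and follows essentially the same route as the paper: both proofs push the four\-/electrode function through the interior factorization \eqref{eq:factorization:G}, extend the explicit unit-disk Neumann kernel $\nabla_x N(x,e^{i\theta})$ holomorphically to a strip $V$ whose width is dictated by where the denominators can vanish for $x$ in a compact $U \supset \supp(1-\sigma)$, and complexify $S$ so that the bilinear expression makes sense for complex arguments. The only differences are technical finishing moves, both legitimate: you obtain $(L^2(U))^2$-valued holomorphy of $\theta \mapsto g(\cdot,\theta)$ from weak holomorphy (Morera/Fubini plus the Dunford weak-implies-strong principle) where the paper proves a uniform difference-quotient bound, and you get joint holomorphy on $V^4$ by composing Banach-space-valued holomorphic maps with the bounded bilinear form $\beta$, which neatly bypasses the paper's appeal to Hartogs' theorem.
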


\begin{proof}
Consider the function
\begin{equation}
\label{eq:complex:g}
\begin{split}
	g(x,\theta,\phi)
	&:= \nabla u^{\delta(\theta) - \delta(\phi)}_\bg(x)
	= -\frac1\pi \begin{bmatrix}
		\Re\left( \frac{\xi_x - e^{i \theta}}{|\xi_x - e^{i \theta}|^2} - \frac{\xi_x - e^{i \phi}}{|\xi_x - e^{i \phi}|^2} \right) \\
		\Im\left( \frac{\xi_x - e^{i \theta}}{|\xi_x - e^{i \theta}|^2} - \frac{\xi_x - e^{i \phi}}{|\xi_x - e^{i \phi}|^2} \right)
		\end{bmatrix} \\
	&=\frac1{2\pi} \begin{bmatrix}
		-\frac{1}{\overline{\xi_x} - e^{-i \theta}} - \frac{1}{\xi_x - e^{i \theta}} + \frac{1}{\overline{\xi_x} - e^{-i \phi}} + \frac{1}{\xi_x - e^{i \phi}}  \\
		\frac{i}{\overline{\xi_x} - e^{-i \theta}} - \frac{i}{\xi_x - e^{i \theta}} - \frac{i}{\overline{\xi_x} - e^{-i \phi}} + \frac{i}{\xi_x - e^{i \phi}}
		\end{bmatrix},
\end{split}
\end{equation}
where $\xi_x = x_1 + i x_2$. The ultimate step on the first line follows from a straightforward complexification of \eqref{eq:nabla:u1:Neumann:function} in the case $\Phi = \mbox{Id}$.

Let $0 < R < 1$ be such that $\supp(1-\sigma) \csubset U := B_R(0)$. 
Set $l = -\frac12 \log R$ and let $V$ be as above.
It is clear from the explicit representation \eqref{eq:complex:g} that each component of $g(x,\cdot,\cdot)$ extends to a holomorphic function of $\theta, \phi \in V$ when $x \in U$. In particular, there exists an $m > 0$ such that $|\xi_x - e^{i \theta}| > m$ and $|\overline \xi_x - e^{-i \theta}| > m$ for all $\theta \in V$, $x \in U$. Hence the real and imaginary parts of $g(\cdot,\theta,\phi)$ are in $(L^2(U))^2$.

The real-valued operator $S$ in \eqref{eq:factorization:G} can be naturally complexified as $S(u + iv) = Su + iSv$. Thus the extension of
\begin{equation}
\label{eq:w:g}
	\tilde \omega(\theta_1,\theta_2,\theta_3,\theta_4) = \int_U g(x,\theta_1,\theta_2) \cdot (S_yg(y,\theta_3,\theta_4))(x)\d x
\end{equation}
to $V^4$ is well-defined.

It is apparent from the explicit representation \eqref{eq:complex:g} that for any $\theta, \phi \in V$ and $\eta \in \C \setminus \{ 0 \}$ such that $\theta + \eta \in V$, there exists a bound $M > 0$ so that the inequality
\begin{equation*}
	\left| \frac{g(x,\theta+\eta,\phi) - g(x,\theta,\phi)}{\eta} - \frac{\partial g}{\partial \theta}(x,\theta,\phi) \right| \leq M |\eta|
\end{equation*}
holds for all $x \in U$. 
As a result, for any bounded linear mapping $L : (L^2(U))^2 \to (L^2(U))^2$ (in particular, ${\rm Id}$ 
and $S$),
\begin{equation*}
\begin{split}
	&\left\| \frac{(L g(\cdot,\theta+\eta,\phi)) - (L g(\cdot,\theta,\phi))}{\eta} - \left(L\frac{\partial g}{\partial \theta}(\cdot,\theta,\phi)\right) \right\|_{(L^2(U))^2} \\
	&= \left\|\left| L\left( \frac{g(\cdot,\theta+\eta,\phi) - g(\cdot,\theta,\phi)}{\eta} - \frac{\partial g}{\partial \theta}(\cdot,\theta,\phi) \right) \right|\right\|_{L^2(U)}
	\leq \| L \| M \sqrt{|U|} |\eta|.
\end{split}
\end{equation*}
The same holds true for the other complex variable $\phi$ of $g$.
This means that the operator $(\theta,\phi) \mapsto Lg(\cdot,\theta,\phi) : V^2 \to (L^2(B))^2$ is strongly holomorphic in both variables. (The complex partial derivative w.r.t. $\theta$ is $(\theta,\phi) \mapsto L (\frac{\partial g}{\partial \theta}(\cdot,\theta,\phi))$.)
Therefore the expression \eqref{eq:w:g}, which is a continuous bilinear form in $(L^2(U))^2$, is separately holomorphic in $V$ with respect to each variable while the others assume arbitrary fixed values in $V$. The claim of joint holomorphy in $V^4$ follows from Hartog's Theorem \cite[Theorem~2.2.8]{Hormander73}, and the analyticity in $\R^4$ follows via restriction.
\end{proof}

By similar arguments (cf. \cite{bisweep}), one can prove the following

\begin{corollary}\label{corollary:four:electrode:disk} %
The mapping $\tilde w : (\partial B)^4 \to \R^4$,
\[
	\tilde w(z_1,z_2,z_3,z_4) = Q_\sigma(\delta_{z_1} - \delta_{z_2},\delta_{z_3} - \delta_{z_4})
\]
extends to a complex analytic function in some neighbourhood of $(\partial B)^4 \subset \C^4$.
\end{corollary}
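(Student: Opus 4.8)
The plan is to deduce Corollary~\ref{corollary:four:electrode:disk} from Theorem~\ref{thm:four:electrode:function} via the exponential parametrization $z_j = e^{i\theta_j}$ of $\partial B$, so that essentially no new estimates are needed. Recall that, with $R \in (0,1)$ chosen so that $\supp(1-\sigma) \csubset U = B_R(0)$ and $l = -\tfrac12 \log R$, Theorem~\ref{thm:four:electrode:function} furnishes a holomorphic extension of $\tilde\omega$ to the polystrip $V^4$, where $V = \{ z \in \C : |\Im z| < l \}$. First I would note that $\tilde\omega$ is $2\pi$-periodic in each argument, since $\delta(\theta + 2\pi) = \delta(\theta)$ as a point mass on $\partial B$; by the identity theorem this periodicity is inherited by the holomorphic extension, because for each $k$ the two holomorphic functions $\theta_k \mapsto \tilde\omega(\dots,\theta_k,\dots)$ and $\theta_k \mapsto \tilde\omega(\dots,\theta_k + 2\pi,\dots)$ already coincide on $\R^4$.

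Next I would observe that $\theta \mapsto e^{i\theta}$ maps $V$ onto the open annulus $A = \{ z \in \C : R^{1/2} < |z| < R^{-1/2} \}$ (because $|e^{i\theta}| = e^{-\Im \theta}$ and $e^{\pm l} = R^{\mp 1/2}$) and is a holomorphic covering of $A$. A $2\pi$-periodic holomorphic function on $V$ therefore descends to a single-valued holomorphic function on $A$: a local inverse is supplied by any branch of $-i\log z$, and the ambiguity of $\log z$ by $2\pi i \mathbb Z$ --- i.e.\ of the angle by $2\pi\mathbb Z$ --- is annihilated by periodicity. Applying this in each of the four variables, the composition $\tilde w(z_1,\dots,z_4) := \tilde\omega(-i\log z_1, \dots, -i\log z_4)$ is a well-defined holomorphic function on the neighbourhood $A^4$ of $(\partial B)^4$, and on $(\partial B)^4$ it equals $Q_\sigma(\delta_{z_1} - \delta_{z_2}, \delta_{z_3} - \delta_{z_4})$ by construction.

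A more hands-on variant, which is presumably the one meant by ``similar arguments'', is to complexify directly in $z_j = e^{i\theta_j}$ rather than in $\theta_j$. The field $g(x,\theta,\phi)$ of \eqref{eq:complex:g} depends on $\theta$ only through $e^{i\theta}$ and $e^{-i\theta}$, and since $e^{-i\theta} = (e^{i\theta})^{-1}$ for complex $\theta$, the substitution $z = e^{i\theta}$, $e^{-i\theta} = 1/z$ turns $g$ into a field $\hat g(x,z,w)$ whose entries are rational in $z,w$ with poles only at $z = \xi_x$ and $z = 1/\overline{\xi_x}$ (and analogously in $w$). For $x \in U$ these satisfy $|z| < R < R^{1/2}$ and $|z| > R^{-1} > R^{-1/2}$ respectively, hence lie outside $A$; consequently the holomorphy and the $(L^2(U))^2$-valued strong-holomorphy difference-quotient bounds from the proof of Theorem~\ref{thm:four:electrode:function} go through verbatim with $(\theta,\phi)$ replaced by $(z,w)$. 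The factorization \eqref{eq:factorization:G}, rewritten as in \eqref{eq:w:g} with $\hat g$, together with Hartogs's theorem, then yields joint holomorphy of $\tilde w$ on $A^4$.

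The only genuine subtlety, and the step I expect to demand the most care, is single-valuedness in the first route: one must verify that the holomorphic extension of $\tilde\omega$ --- not merely its restriction to $\R^4$ --- is $2\pi$-periodic, which is exactly the identity-theorem argument above and is what makes the descent through the multivalued logarithm legitimate. The second route sidesteps this entirely, at the cost of re-checking that the poles of $\hat g$ stay off the annulus $A$ and that the strong-holomorphy estimates persist; both are immediate from the explicit rational form of $\hat g$.
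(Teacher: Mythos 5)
Your proposal is correct. The paper in fact writes no proof of this corollary at all---it is dispatched with ``by similar arguments (cf.\ \cite{bisweep})'' after Theorem~\ref{thm:four:electrode:function}---and your second, hands-on route is precisely the argument that phrase points to: substitute $z=e^{i\theta}$, $e^{-i\theta}=1/z$ into \eqref{eq:complex:g}, observe that the resulting field is rational with poles at $\xi_x$ and $1/\overline{\xi_x}$, which for $x\in U=B_R(0)$ lie strictly inside $|z|<R^{1/2}$ and strictly outside $|z|>R^{-1/2}$ respectively, and then rerun the strong-holomorphy difference-quotient bounds and Hartogs exactly as in the theorem, with the strip $V$ replaced by the annulus $A=\{R^{1/2}<|z|<R^{-1/2}\}$. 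Your first route is a genuinely cleaner alternative that the paper does not take: it uses Theorem~\ref{thm:four:electrode:function} as a black box, upgrades the real-variable $2\pi$-periodicity of $\tilde\omega$ to periodicity of its holomorphic extension on $V^4$ by the (iterated) identity theorem, and descends through the covering $\theta\mapsto e^{i\theta}:V\to A$ in each variable; this buys the corollary with no new estimates, at the cost of the single-valuedness check you correctly isolate as the one delicate step. Both routes are sound, and the pole-location and annulus computations ($e^{\pm l}=R^{\mp 1/2}$, $|\xi_x|<R<R^{1/2}$, $|1/\overline{\xi_x}|>R^{-1}>R^{-1/2}$) are all accurate.
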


Together with \eqref{eq:NtoD:weak:convergence}, the next lemma proves Theorem~\ref{thm:bisweep:NtoD:kernel}.

\begin{lemma}\label{lemma:bisweep:NtoD:kernel}
For any $f, g \in \cont_\diamond(\partial D)$, it holds that
\begin{equation*}
	Q_\sigma(f,g)
	= -\tfrac{1}2 \iint\limits_{\partial D \times \partial D} f(x) g(y) \varsigma(x,y) \d s(x) \d s(y).
\end{equation*}
In addition,
\begin{equation}
\label{eq:bisweep:NtoD:series}
	\varsigma_\sigma(x,y) = \lim_{N \to \infty} \sum_{i,k=1}^N \lambda_{ik} \left(\phi_k(x)\phi_i(x) - 2\phi_k(x)\phi_i(y) + \phi_k(y)\phi_i(y) \right)
\end{equation}
uniformly in $\partial D \times \partial D$, where $\{ \phi_i \}_{i=0}^\infty$ is a certain orthonormal basis for $L^2(\partial D)$ and $\lambda_{ij} = Q_\sigma(\phi_i,\phi_j)$, $i,j = 1,2,3,\ldots$
\end{lemma}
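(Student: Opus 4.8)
The plan is to treat the two assertions separately: first the integral (Schwartz--kernel) identity, and then to extract the uniformly convergent series from a spectral decomposition of the bilinear form $Q_\sigma$ viewed as an operator on the mean-free subspace $L^2_\diamond(\partial D) := \{ f \in L^2(\partial D) : \int_{\partial D} f \d s = 0 \}$.

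For the integral identity I would expand the bisweep data around a fixed reference point. Since $Q_\sigma$ is only defined on $\cont'_\diamond(\partial D)$, I fix $z_0 \in \partial D$ and write $\delta_x - \delta_y = (\delta_x - \delta_{z_0}) - (\delta_y - \delta_{z_0})$, so that by bilinearity and the symmetry of $Q_\sigma$ (Lemma~\ref{lemma:relative:NtoD:self:duality}),
\[
	\varsigma_\sigma(x,y) = \varsigma_\sigma(x,z_0) + \varsigma_\sigma(y,z_0) - 2 K(x,y), \qquad K(x,y) := Q_\sigma(\delta_x - \delta_{z_0}, \delta_y - \delta_{z_0}),
\]
each term now being well defined. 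Substituting this into $-\tfrac12 \iint f(x) g(y) \varsigma_\sigma(x,y) \d s \, \d s$, the two single-variable terms integrate to zero because $f$ and $g$ are mean-free, leaving $\iint f(x) g(y) K(x,y) \d s \, \d s$, which must be identified with $Q_\sigma(f,g)$. I would do this by a weak* Riemann-sum argument: for partitions of $\partial D$ set $\mu_n = \sum_a f(x_a)(\delta_{x_a} - \delta_{z_0}) \Delta s_a$ and $\eta_n$ analogously for $g$, and check directly that $\mu_n \wto f \d s$ and $\eta_n \wto g \d s$ (the $\delta_{z_0}$ terms drop out precisely because $\int f \d s = \int g \d s = 0$). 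By bilinearity $Q_\sigma(\mu_n, \eta_n) = \sum_{a,b} f(x_a) g(y_b) K(x_a, y_b) \Delta s_a \Delta s_b$ is a Riemann sum for $\iint f g K$; since $\varsigma_\sigma$, hence $K$, is continuous on $\partial D \times \partial D$ (via the disk analyticity of Corollary~\ref{corollary:four:electrode:disk}, the conformal identity \eqref{eq:bisweep:conformal}, and the Hölder continuity of $\Phi$), the Riemann sums converge to the integral, while the weak*-to-strong continuity of $G$ in Theorem~\ref{thm:NtoD:factorization} gives $Q_\sigma(\mu_n, \eta_n) \to Q_\sigma(f,g)$. This yields the first identity.

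For the series, the key point is that $Q_\sigma$ extends to a bounded symmetric bilinear form on $L^2_\diamond(\partial D)$ represented by a trace-class operator. Indeed, the factorization $Q_\sigma(\mu,\eta) = \int_U (SG\mu) \cdot (G\eta) \d x$ of Theorem~\ref{thm:NtoD:factorization} shows that the representing operator is $\mathcal Q_\sigma = G^* S G$, where $G : L^2_\diamond(\partial D) \to (L^2(U))^2$ is, by \eqref{eq:nabla:u1:Neumann:function} and the uniform bound on $\nabla_z N(\Phi(z),\Phi(w))$ from Lemma~\ref{lemma:u1:Neumann:function}, a Hilbert--Schmidt operator (its kernel is bounded and continuous on $U \times \partial D$). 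A product of two Hilbert--Schmidt operators being trace class, $\mathcal Q_\sigma$ is self-adjoint and trace class, and by the first identity its kernel is $-\tfrac12 \varsigma_\sigma$. I would then take $\{\phi_i\}_{i \geq 1}$ to be an orthonormal eigenbasis of $\mathcal Q_\sigma$ in $L^2_\diamond(\partial D)$ and $\phi_0$ the normalized constant, so that $\lambda_{ik} = Q_\sigma(\phi_i,\phi_k) = \kappa_i \delta_{ik}$ with $\kappa_i$ the eigenvalues. Since $\varsigma_\sigma$ is continuous and $\mathcal Q_\sigma$ is trace class, a generalized Mercer theorem gives the absolutely and uniformly convergent expansion $-\tfrac12 \varsigma_\sigma(x,y) = \sum_i \kappa_i \phi_i(x) \phi_i(y)$ with continuous $\phi_i$. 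The claimed partial sums then read $\sum_{i=1}^N \kappa_i (\phi_i(x) - \phi_i(y))^2$, and on expanding the square the uniform diagonal limit $\sum_i \kappa_i \phi_i(x)^2 = -\tfrac12 \varsigma_\sigma(x,x) = 0$ (bisweep data vanish on the diagonal) annihilates the two like-variable sums, leaving $-2 \sum_i \kappa_i \phi_i(x)\phi_i(y) = \varsigma_\sigma(x,y)$, with uniform convergence inherited from Mercer's theorem.

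The hard part is the uniform convergence in the second assertion. Because $Q_\sigma$ is not sign-definite, the classical Mercer theorem does not apply directly, and the argument must instead invoke its extension to trace-class operators with continuous kernels; the crux that makes this available is the rigorous trace-class property, which is exactly what the Hilbert--Schmidt bound on $G$ (and hence the representation $\mathcal Q_\sigma = G^* S G$) is designed to supply. A secondary point requiring care is that all the regularity feeding these steps—continuity of $\varsigma_\sigma$ and of the eigenfunctions $\phi_i$—must be traced back to the disk analyticity of Corollary~\ref{corollary:four:electrode:disk} and the conformal invariance \eqref{eq:bisweep:conformal}, since the limited boundary regularity of $\Phi$ at the corners of $\partial D$ obstructs a more elementary Fourier-series argument carried out directly on $\partial D$.
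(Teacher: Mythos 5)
Your proof of the first identity is correct and takes a genuinely different route from the paper: you establish the kernel formula directly, via the reference-point decomposition $\varsigma_\sigma(x,y)=\varsigma_\sigma(x,z_0)+\varsigma_\sigma(y,z_0)-2K(x,y)$ with $K(x,y)=Q_\sigma(\delta_x-\delta_{z_0},\delta_y-\delta_{z_0})$, mean-freeness of $f,g$ to kill the single-variable terms, and Riemann sums $\mu_n\wto f\d s$ combined with the weak*-to-strong continuity of $G$ from Theorem~\ref{thm:NtoD:factorization}. The paper instead obtains this identity as a corollary of the uniformly convergent series, so your order of deduction is reversed; your argument for this half is sound and arguably more elementary.

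The gap is in the second half. Your uniform convergence of \eqref{eq:bisweep:NtoD:series} rests on a ``generalized Mercer theorem'' asserting that a self-adjoint \emph{trace-class} integral operator with continuous but \emph{indefinite} kernel admits an absolutely and uniformly convergent bilinear eigenfunction expansion. No such theorem exists at this level of generality, and trace-classness (which your Hilbert--Schmidt factorization $\mathcal Q_\sigma=G^*SG$ does correctly supply) is not the missing ingredient. Mercer's proof uses positivity essentially: positivity of the remainder kernels gives $\sum_{i\le N}\kappa_i\phi_i(x)^2\le K(x,x)$, and Dini plus Cauchy--Schwarz finish; for indefinite kernels the only a priori eigenfunction bound is $\|\phi_i\|_\infty\le C/|\kappa_i|$, which together with $\sum_i|\kappa_i|<\infty$ yields nothing. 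The known indefinite extensions require, e.g., that the positive and negative parts of the operator themselves have continuous kernels --- one can write $\mathcal Q_\sigma=G^*S_+G-G^*S_-G$ with each term a continuous positive-definite kernel and apply Mercer to each, but the resulting expansions are in the eigenfunctions of $G^*S_\pm G$, not of $\mathcal Q_\sigma$, and do not recombine into the claimed single orthonormal system. Nor can smoothness rescue the eigenbasis argument: because of the corners, $\Phi$ and hence $\varsigma_\sigma$ are only $\cont^\alpha$, which is too weak for the rapid-eigenvalue-decay argument that works for $\cont^\infty$ kernels. The paper sidesteps spectral theory entirely: it expands the Hölder-continuous four-variable function $\omega$ in a \emph{fixed} transported trigonometric basis, where uniform convergence of multiple Fourier series of Hölder functions is classical, and computes the Fourier coefficients explicitly in terms of $\lambda_{ik}$ (this is also why the statement has a double sum rather than a diagonal one). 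A secondary flaw: since $\mathcal Q_\sigma$ acts only on mean-free functions, its kernel is determined only modulo terms $a(x)+b(y)$, so the literal identities $-\tfrac12\varsigma_\sigma(x,y)=\sum_i\kappa_i\phi_i(x)\phi_i(y)$ and $\sum_i\kappa_i\phi_i(x)^2=-\tfrac12\varsigma_\sigma(x,x)=0$ cannot hold as stated (the left sides are not doubly mean-free while the right sides are); the combination $(\phi_i(x)-\phi_i(y))^2$ is insensitive to this ambiguity, but the intermediate steps would need to be reformulated even if the convergence issue were resolved.
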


\begin{proof}
Let $c_0 \in \cont^\alpha([0,L))$ be a natural parametrization of $\partial D$, %
i.e.,
$
	\int_{c_0((a,b))} \d s = b-a
$
for all $0 \leq a \leq b < L$. Denote by $c$ the periodic continuation of $c_0$ to $\cont^\alpha(\R)$ and let \[
\{ \varphi_j(x) \}_{j=0}^\infty = \left\{ \tfrac1{\sqrt L}, \sqrt{\tfrac2L} \cos \tfrac{2\pi x}L, \sqrt{\tfrac2L} \sin \tfrac{2\pi x}L, \sqrt{\tfrac2L} \cos \tfrac{4 \pi x}L, \ldots \right\}                                                                                     \]
be the standard trigonometric Fourier basis of $L^2((0,L))$.

Let $\Phi : D \to B$ be a conformal map and $\tilde \sigma = \sigma \circ \Phi^{-1}$.
Corollary~\ref{corollary:four:electrode:disk} shows that the function
$\tilde w(\tilde x,\tilde y,\tilde p,\tilde q) = Q_{\tilde \sigma}(\delta_{\tilde x} - \delta_{\tilde y}, \delta_{\tilde p} - \delta_{\tilde q})$ is smooth (jointly analytic) on $\partial B \times \partial B \times \partial B \times \partial B$. Thus
\[
	\omega(x',y',p',q') := Q_{\tilde \sigma}(\delta_{\Phi(c(x'))} - \delta_{\Phi(c(y'))},\delta_{\Phi(c(p'))} - \delta_{\Phi(c(q'))})
\]
is Hölder continuous in $\R^4$ (because $\Phi \in \cont^\alpha(\overline D)$) and therefore the following four-dimensional Fourier series of $\omega$, where $\varphi_{ijkl}(x',y',p',q') = \varphi_i(x')\varphi_j(y')\varphi_k(p')\varphi_l(q')$,
converges uniformly~\cite{golubov1984}:%
\begin{equation*}
	\omega
	= \lim_{N \to \infty} \sum_{i,j,k,l=0}^N \varphi_{ijkl} \int\limits_{(0,L)^4} \varphi_{ijkl}\omega \d t^4.
\end{equation*}
Now set $\phi_j = \varphi_j \circ c_0^{-1}$. By Theorem~\ref{thm:pull:back:var}, Lemma~\ref{lemma:relative:NtoD:is:trace:of:w}, and definition \eqref{eq:def:four:electrode:function},
\[
	w(x,y,p,q) = \omega(c_0^{-1}(x),c_0^{-1}(y),c_0^{-1}(p),c_0^{-1}(q)) %
	= w^{p,q}(x) - w^{p,q}(y),
\]
where $w^{p,q} \in \cont_\diamond(\partial D)$ denotes the continuous representative of the Dirichlet trace $\gamma w^{\delta_p-\delta_q}$ with zero mean and $w^{\delta_p-\delta_q}$ is as in \eqref{eq:def:w}.

Similarly, let $w_i$ denote the representative of $\gamma w^{\phi_i}$ with zero mean. 
By Theorem~\ref{thm:NtoD:continuity} and Lemma~\ref{lemma:relative:NtoD:is:trace:of:w}, $\int_{\partial D} w^{p,q} \phi_i \d s = w_i(p)-w_i(q)$ and $\lambda_{ij} = \lambda_{ji} = \int_{\partial D} w_i \phi_j \d s$ for all $i,j = 0,1,2,\ldots$ if one sets $\lambda_{0i} = \lambda_{j0} = 0$.
Define $\phi_{ij}, \phi_{ijk}, \phi_{ijkl}$ like $\varphi_{ijkl}$ above. Then
{\allowdisplaybreaks %
\begin{align*}
	&\int_{(0,L)^4} \varphi_{ijkl} \omega \d t^4 
	= \int_{(\partial D)^4} \phi_{ijkl} w \d s^4 \\
	&= \int_{(\partial D)^3} \phi_{jkl} \int_{\partial D} (w^{p,q}(x) - w^{p,q}(y))\phi_i(x)\d s(x) \d s^3 \\
	&= \int_{(\partial D)^3} \phi_{jkl} \left(\int_{\partial D} w^{p,q} \phi_i \d s  - \delta_{i0} w^{p,q}(y)L^{\frac12} \right) \d s^3 \\
	&= \int_{(\partial D)^3} \phi_{jkl} \left((w_i(p)-w_i(q)) - \delta_{i0} w^{p,q}(y)L^{\frac12} \right) \d s^3 \\
	&= \int_{(\partial D)^2} \phi_{kl} \left(\delta_{j0} (w_i(p)-w_i(q)) - \delta_{i0} (w_j(p)-w_j(q)) \right) L^{\frac12} \d s^2 \\
	&= \int_{\partial D} \phi_l \left(\delta_{j0} (\lambda_{ik} - \delta_{k0} L^{\frac12} w_i(q)) - \delta_{i0} (\lambda_{jk} - \delta_{k0} L^{\frac12} w_j(q)) \right) L^{\frac12} \d s\\
	&= \left(\delta_{j0} (\delta_{l0}\lambda_{ik} - \delta_{k0} \lambda_{il}) - \delta_{i0} (\delta_{l0}\lambda_{jk} - \delta_{k0} \lambda_{jl}) \right) L,
\end{align*}}%
where $\delta_{ij}$ is the Kronecker delta.

Inserting the above expression into
\[
	\varsigma_\sigma(x,y) = w(x,y,x,y) = \lim_{N \to \infty} \sum_{i,j,k,l=0}^N \phi_i(x)\phi_j(y)\phi_k(x)\phi_l(y) \int\limits_{(0,L)^4} \varphi_{ijkl}\omega \d t^4
\]
and simplifying yields \eqref{eq:bisweep:NtoD:series} and the limit is uniform in $\partial D \times \partial D$.

As a result, 
{\allowdisplaybreaks
\begin{align*}
	&\iint\limits_{\partial D \times \partial D} \varsigma_\sigma(y,z) f(y)g(z) \d s(y) \d s(z)
	= -2 \lim_{N \to \infty} \sum_{i,j=1}^N {\lambda}_{ij} \int_{\partial D} \phi_i f \d s \int_{\partial D} \phi_j g \d s \\
	&= -2 \lim_{N \to \infty}  \int_{\partial D} \left(\textstyle\sum_{i=1}^N \phi_i \textstyle \int_{\partial D} \phi_i f \d s \right) \left((\Lambda_\sigma - \Lambda_\bg)\left(\textstyle\sum_{j=1}^N \phi_j \textstyle \int_{\partial D} \phi_j g \d s \right)\right) \d s \\
	&= -2 \int_{\partial D} f \left((\Lambda_\sigma - \Lambda_\bg) g \right) \d s,
\end{align*}}%
where the last equality follows from the boundedness of $\Lambda_\sigma - \Lambda_\bg : L^2_\diamond(\partial D) \to L^2(\partial D)/\R$ and the fact that $\sum_{i=1}^N \phi_i \int_{\partial D} \phi_i f \d s \rightarrow f$ in $L^2(\partial D)$ for any $f \in L^2_\diamond(\partial D)$.
\end{proof}

The analyticity of the ``four-electrode function'' $\tilde \omega$ can be combined with the conformal mapping property of point current patterns, which yields the new partial data result:

\begin{proof}[Proof of Theorem~\ref{thm:partial:calderon:w}]
Let $\Phi : D \to B$ be a conformal map. For an arbitrary branch of the complex logarithm, set $\tilde \Gamma = -i \log \Phi(\Gamma) \subset \R$, $\tilde \Xi = -i \log \Phi(\Xi) \subset \R$. By Theorem~\ref{thm:pull:back:var}, the measurements \eqref{eq:def:four:electrode:function} determine the quantity
$
	\tilde \omega(\theta,\theta_0, \phi,\phi_0),
$
defined in Theorem~\ref{thm:four:electrode:function},
for all $\theta,\theta_0 \in \tilde \Gamma$, $\phi,\phi_0 \in \tilde \Xi$.

Let $\theta_0 \in \tilde \Gamma$, $\phi_0 \in \tilde \Xi$ be arbitrary and $\theta_1$, $\phi_1$ be some accumulation points of $\tilde \Gamma$ and $\tilde \Xi$, respectively. As in \cite{bisweep}, for any integers $j, k \geq 1$, it holds that
\[
	\frac{\partial^j}{\partial \theta_1^j} \frac{\partial^k}{\partial \phi_1^k}  \tilde \omega(\theta_1,\theta_0,\phi_1,\phi_0)
	=
	\langle \delta_{\theta_1}^{(j)}, (\Lambda_{\tilde \sigma} - \Lambda_\bg) \delta_{\phi_1}^{(k)} \rangle,
\]
where $\delta_\theta^{(j)} : f \mapsto \frac{\d^j}{\d \theta^j} f(e^{i \theta})$ and $\Lambda_{\tilde \sigma} - \Lambda_\bg : H^{-s}_\diamond(\partial B) \to H^s(\partial B)/\R$ is as in Lemma~\ref{lemma:same:NtoD:definitions} for some sufficiently large $s$. Furthermore, the values of the partial derivatives on the left hand side are revealed by the measurements.

Due to the analyticity of $(\Lambda_{\tilde \sigma} - \Lambda_\bg) \delta_{\phi_1}^{(k)} \in \cont^\infty(\partial B)/\R$, this also determines $\langle \delta_{\phi_1}^{(j)}, (\Lambda_{\tilde \sigma} - \Lambda_\bg) \delta_{\phi_1}^{(k)} \rangle$ for all $j,k \geq 1$. By \cite[Theorem 2.2]{bisweep}, Theorem~\ref{thm:bisweep:NtoD:kernel} and \eqref{eq:bisweep:conformal}, this yields $\Lambda_\sigma - \Lambda_\bg : H^{-1/2}_\diamond(\partial D) \to H^{1/2}(\partial D)/\R$. Since $\Lambda_\bg$ is known \emph{a priori}, one recovers $\sigma \in L^\infty(D)$ from $\Lambda_\sigma$ by \cite{astalapaivarinta2006}.
\end{proof}

\begin{lemma}[Approximation of measures by continuous functions]
\label{lemma:approximation}
Let $\mu \in \cont'_\diamond(\partial D)$ be arbitrary and $V$ any relatively open subset of $\partial D$ such that $\supp \mu \csubset V$. Then there exists a sequence $\{ \mu_j \} \subset \cont_\diamond(V)$ such that $\mu_j \wto \mu$.
\end{lemma}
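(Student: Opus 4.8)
The plan is to produce the $\mu_j$ as mollifications of $\mu$ carried out along the boundary curve. First I would transport everything to the circle by means of the natural (arc-length) parametrization $c$ of $\partial D$ used in the proof of Lemma~\ref{lemma:bisweep:NtoD:kernel}: viewed as a homeomorphism $c : \R/L\mathbb{Z} \to \partial D$, it pushes the Lebesgue measure on $\R/L\mathbb{Z}$ forward to the arc-length measure $s$, so it identifies $\cont(\partial D)$ with $\cont(\R/L\mathbb{Z})$ and lets me pull $\mu$ back to a finite signed measure $\hat\mu$ on the compact group $\R/L\mathbb{Z}$ with $\hat\mu(\R/L\mathbb{Z}) = \mu(\partial D) = 0$. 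The pulled-back support $\hat K := c^{-1}(\supp\mu)$ is then a compact subset of the open set $\hat V := c^{-1}(V)$, so $d := \mathrm{dist}(\hat K,\, (\R/L\mathbb{Z})\setminus\hat V) > 0$ by compactness.

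Next I would convolve on the group. Let $h_\epsilon$ be the mollifier from the proof of Lemma~\ref{lemma:variation:c1uniform}, taken in one variable and regarded periodically, so that $h_\epsilon \geq 0$, $\supp h_\epsilon \subset (-\epsilon,\epsilon)$, and $\int h_\epsilon \d x = 1$, and set $\hat\mu_\epsilon := h_\epsilon \ast \hat\mu \in \cont^\infty(\R/L\mathbb{Z})$. Transporting $\hat\mu_\epsilon$ back to $\partial D$ via $c$ yields a candidate $\mu_\epsilon \in \cont(\partial D)$, and three properties remain to be checked. For the \emph{support}, $\supp\hat\mu_\epsilon \subset \hat K + (-\epsilon,\epsilon) \subset \hat V$ as soon as $\epsilon < d$, so $\supp\mu_\epsilon \subset V$. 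For the \emph{mean}, Fubini's theorem gives $\int \hat\mu_\epsilon \d x = \bigl(\int h_\epsilon \d x\bigr)\,\hat\mu(\R/L\mathbb{Z}) = 0$, whence $\int_{\partial D}\mu_\epsilon\d s = 0$ and $\mu_\epsilon \in \cont_\diamond(V)$. For the \emph{weak* convergence}, fix $f \in \cont(\partial D)$ and write $\hat f = f\circ c$; Fubini's theorem gives
\[
	\int_{\partial D} f\,\mu_\epsilon \d s
	= \int_{\R/L\mathbb{Z}} \hat f\,\hat\mu_\epsilon \d x
	= \int_{\R/L\mathbb{Z}} (\hat f \ast \check h_\epsilon)\d\hat\mu,
	\qquad \check h_\epsilon(t) := h_\epsilon(-t),
\]
and since $\hat f$ is uniformly continuous, $\hat f \ast \check h_\epsilon \to \hat f$ uniformly; as $\hat\mu$ is a finite measure, the right-hand side tends to $\int \hat f \d\hat\mu = \int_{\partial D} f \d\mu$.

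Taking $\epsilon = 1/j$ for $j$ large enough that $1/j < d$ then gives the required sequence $\{\mu_j\} \subset \cont_\diamond(V)$ with $\mu_j \wto \mu$; note that because every $\mu_j$ and $\mu$ annihilates constants, testing against representatives $f \in \cont(\partial D)$ is the same as testing against $\cont(\partial D)/\R$, which is exactly the weak* convergence demanded by the definition in $\cont'_\diamond(\partial D)$. I expect the only genuinely delicate point to be the support control: it is precisely the \emph{compact} containment $\supp\mu\csubset V$ that produces the positive margin $d$ and allows the mollification radius to be shrunk to zero without letting mass escape from $V$. The remaining steps are the standard mollifier estimate together with two applications of Fubini.
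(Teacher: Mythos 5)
Your proof is correct and follows essentially the same route as the paper's: both pull $\mu$ back through an arc-length parametrization of $\partial D$ and mollify with a periodic approximate identity. The only difference is bookkeeping: you secure $\supp \mu_j \subset V$ by taking the mollification radius below the margin $d$ and obtain mean-freeness for free from Fubini, whereas the paper post-processes a generic mollified sequence with a cutoff function $\alpha_V$ and constant shifts $C_j$ to enforce those same two properties.
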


\begin{proof}
Using a parametrization of $\partial D$ and convolution with mollifiers in $\R$ (cf., e.g., \cite[\S1.1.5]{mazja1985}\cite[§2.1.3]{necas}), one can construct a sequence $\{ \tilde \mu_j \} \subset \cont(\partial D)$ such that $\tilde \mu_j \wto \mu$. Let $\alpha_V \in \cont(V)$ %
be such that $\supp (\alpha_V - 1) \cap \supp \mu = \emptyset$ and set $C_j = -\int_{\partial D} \alpha_V \tilde \mu_j \d s / \int_{\partial D} \alpha_V \d s$. Then
$
	\mu_j = (\tilde \mu_j + C_j)\alpha_V
$
has the claimed properties. 
\end{proof}

The measurements \eqref{eq:def:four:electrode:function} can be interpreted as follows:
Let $x,y,p,q \in \partial D$ be arbitrary. There exists sequences $\{ f_j \}, \{ g_j \} \subset \cont_\diamond(\partial D)$ such that $f_j \wto \delta_{x} - \delta_{y}$, $g_j \wto \delta_p - \delta_q$. Due to \eqref{eq:NtoD:weak:convergence},
\begin{equation*}
	w(x, y, p, q) = \lim_{j \to \infty} \langle f_j, (\Lambda_\sigma - \Lambda_\bg) g_j \rangle,
\end{equation*}
where
\[
\langle f_j, (\Lambda_\sigma - \Lambda_\bg) g_j \rangle =
	\langle f_j, \Lambda_\sigma g_j \rangle - \langle f_j, \Lambda_\bg g_j \rangle,
\]
and $\langle f_j, \Lambda_\bg g_j \rangle$ does not depend on $\sigma$ but only the shape of $D$ and is thus known \emph{a priori}.
Notice also that, by Lemma~\ref{lemma:approximation}, the sequences may be chosen so that $\{ f_j \} \subset \cont_\diamond(V)$, $\{ g_j \} \subset \cont_\diamond(W)$, which yields Corollary~\ref{corollary:L2:partial:calderon}.

\section{Reconstruction of $\sigma$ from bisweep data}\label{sec:rec:numerics}

\subsection{Reconstruction method}

Given a conformal map $\Phi : D \to B$ one can compute a reconstruction of the unit disk conductivity $\tilde \sigma = \sigma \circ \Phi^{-1}$ from the bisweep data $\varsigma_{\tilde \sigma}$ and map it back to $D$ using $\Phi^{-1}$.
Due to \eqref{eq:bisweep:conformal}, $\varsigma_{\tilde \sigma}$ can be computed from $\varsigma_\sigma$. In addition, Theorem~\ref{thm:bisweep:NtoD:kernel} provides means for converting between bisweep data and the relative Neumann-to-Dirichlet map $\Lambda_\sigma - \Lambda_\bg : L^2_\diamond(D) \to L^2(D)/\R$.
However, there are some subtleties concerning discretization and relationship to electrode measurements in this approach.

We consider the setting where $\Phi : D \to B$ is a conformal map, $\tilde x_1,\ldots,\tilde x_n \in \partial B$ equispaced points, and the bisweep data $\varsigma_{ij} = \varsigma_\sigma(x_i,x_j)$ are given for all $i,j=1,\ldots,n$, where $x_j := \Phi^{-1}(\tilde x_j)$.
From a practical point of view, this approximates a measurement where one can select the positions $x_1,\ldots,x_n$ of $n$ electrodes on the boundary $\partial D$ of the object according to a pre-defined pattern, and then conduct the (relative) EIT measurements (cf. \cite{pem}). However, unlike in smooth domains, in piecewise smooth $D$, this relationship between relative point electrode measurements and the realistic complete electrode model \cite{eit99} has not been rigorously studied.

By \eqref{eq:bisweep:conformal}, the discrete bisweep data $\varsigma_{ij}$ correspond to the unit disk data $\varsigma_{\tilde \sigma}$ sampled on the regular grid $\{\tilde x_1,\ldots,\tilde x_n\} \times \{\tilde x_1,\ldots,\tilde x_n\}$ and, due to Theorem~\ref{thm:bisweep:NtoD:kernel},
their discrete two-dimensional Fourier coefficients
\[
	c_{ij} := \sum_{k,l=1}^n \frac{4 \pi^2}{n^2} \phi_i(\tilde x_l) \phi_j(\tilde x_k) \varsigma_{kl}, \qquad 1 \leq i,j \leq n-1
\] 
with respect to (for example) the basis
\begin{equation*}
	\{ \phi_i((\cos \theta,\sin \theta)) \}_{i=0}^{n-1}
	= \left\{ \tfrac1{\sqrt{2\pi}}, \sqrt{\tfrac1\pi} \cos \theta, \sqrt{\tfrac1\pi} \sin \theta, \sqrt{\tfrac1\pi} \cos (2\theta), \ldots \right\}
\end{equation*}
can be used to approximate
\begin{equation}
\label{eq:tilde:lambda:ij}
	\tilde \lambda_{ij}
	= Q_{\tilde \sigma}(\phi_i,\phi_j)
	= \langle \phi_j, (\Lambda_{\tilde \sigma} - \Lambda_\bg)\phi_i \rangle
\end{equation}
as $\tilde \lambda_{ij} \approx -\frac12 c_{ij}$.
Now one can compute a reconstruction of the conductivity $\sigma$ in arbitrary points $y_1,\ldots,y_m \in D$ by reconstructing $\tilde \sigma$ in $\Phi(y_1),\ldots,\Phi(y_m)$ from the approximate matrix representation $[\tilde \lambda_{ij}]_{i,j=1,\ldots,n-1}$ of the relative Neumann-to-Dirichlet map for the unit disk conductivity $\tilde \sigma$.
In the following numerical examples, the unit disk reconstruction is done using the factorization method (see Section~\ref{sec:factorization:method} below),
which aims to recover the support of the \emph{inclusions} $1-\tilde \sigma$.

\begin{remark}
If the discrete measurement on $\partial D$ does not correspond to bisweep data but is (interpreted as), for example, $\lambda_{ij}$, $i,j=1,\ldots,n-1$ (cf. eq. \ref{eq:tilde:lambda:ij}), then one could compute an approximation of $\varsigma_{ij}$ by truncating the series \eqref{eq:bisweep:NtoD:series}.
If the data can be interpreted as a general measurement with $n$ point electrodes, that is,
\[
	m_{ij} = Q_\sigma \left( \sum_{k=1}^n \alpha_{ik} \delta_{x_k}, \sum_{l=1}^n \beta_{jl} \delta_{x_l}\right),
\]
where $\alpha_1,\ldots,\alpha_{n-1} \in \R^n$ and $\beta_1,\ldots,\beta_{n-1} \in \R^n$ are linearly independent sets of mean-free vectors, then one can compute $\varsigma_{ij}$ from $m_{ij}$ by solving a system of linear equations.
\end{remark}

\subsection{Factorization method}\label{sec:factorization:method}

In this section,
a numerical method for locating the inhomogeneities in $\tilde \sigma \in L^\infty(B)$ (similar to that in, e.g., \cite{harrach09freq} and \cite{lechleiter2008factorization}) is presented. A theoretical result behind the method---originally from Kirsch \cite{kirsch1998characterization}---is stated for completeness, but all further analysis is omitted. The applicability is demonstrated by the numerical examples in Section~\ref{sec:numerical:examples}. For proofs and other properties of the factorization method, refer to, e.g.,
 \cite{bruhl2001factorization,bruhl2000numerical,hanke2003recent,hyvonengebauer2007,lechleiter2008factorization,harrach09freq}.

Assume $\Omega \csubset B$ is such that $B \setminus \overline \Omega$ is connected and $\tilde \sigma \in L^\infty(B)$ satisfies
\begin{equation}
\label{eq:positive:sigma}
	0 < c \leq \tilde \sigma \leq C < 1 \ \mbox{ a.e. in }\Omega,
	\qquad
	\tilde \sigma = 1 \ \mbox{ in }B \setminus \overline \Omega.
\end{equation}
Let $\{ v_k \} \subset L^2_\diamond(\partial B)$ be orthonormal eigenfunctions and $\{ \lambda_k \} \subset \R$ the corresponding eigenvalues of the compact operator $\Lambda_{\tilde \sigma} - \Lambda_\bg : L^2_\diamond(\partial B) \to L^2(\partial B)/\R$: %
\begin{equation*}
	(\Lambda_{\tilde \sigma} - \Lambda_\bg) v_k = \lambda_k v_k + \R, \qquad k=1,2,3,\ldots
\end{equation*} 
Denote $g_{z,d} : \partial B \to \R$, \cite{bruhl2001factorization} %
\begin{equation*}
	g_{z,d}(x) = d \cdot \nabla_z N(z,x) = \frac1\pi \frac{(z-x)\cdot d}{|z-x|^2},
\end{equation*}
where $z \in B$, $d \in \partial B \subset \R^2$ and $N$ is given by \eqref{eq:unit:disk:Neumann:function}.

\begin{theorem}[Factorization method] %
Let $d \in \partial B$ be arbitrary. For any $z \in B \setminus \partial \Omega$, $z \in \Omega$ if and only if \cite{hyvonengebauer2007}
\begin{equation}
\label{eq:picard:criterion}
	f_d(z) := \frac1{\|g_{z,d}\|_{L^2(\partial B)}^2} \sum_{k=1}^\infty \frac{\left|\int_{\partial B} g_{z,d} v_k \d s\right|^2}{|\lambda_k|} < \infty.
\end{equation}
\end{theorem}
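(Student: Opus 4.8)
The plan is to reduce the statement to the abstract factorization-method machinery of Kirsch and Brühl--Hanke. First I would record the structural facts about $M := \Lambda_{\tilde\sigma} - \Lambda_\bg : L^2_\diamond(\partial B) \to L^2(\partial B)/\R$ needed to run that machinery. By Theorem~\ref{thm:bisweep:NtoD:kernel} the operator $M$ has the Hölder continuous Schwartz kernel $-\tfrac12 \varsigma_{\tilde\sigma}$, so it is Hilbert--Schmidt and hence compact; by Lemma~\ref{lemma:relative:NtoD:self:duality} (symmetry of $Q_{\tilde\sigma}$) it is self-adjoint. Thus the orthonormal eigensystem $(\lambda_k, v_k)$ in the statement exists. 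Moreover, the assumption $\tilde\sigma \leq C < 1$ in \eqref{eq:positive:sigma} yields monotonicity $\Lambda_{\tilde\sigma} \geq \Lambda_\bg$, so $M$ is positive semidefinite and $\lambda_k > 0$, whence $|\lambda_k| = \lambda_k$ in \eqref{eq:picard:criterion}.

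Second, I would extract a symmetric factorization of $M$ from Theorem~\ref{thm:NtoD:factorization}. Choosing an open $U$ with $\supp(1-\tilde\sigma) \subset \overline\Omega \subset U \csubset B$, equation \eqref{eq:factorization:G} gives $\langle \eta, M\mu \rangle = \int_U (SG\mu)\cdot(G\eta)\d x$, i.e.\ $M = G^* S G$ with $G : \mu \mapsto \nabla u_\bg^\mu|_U$ and $S = (1-\tilde\sigma)(I + \nabla T\tilde F)$. Self-adjointness of $M$ shows the quadratic form $p \mapsto \int_U (Sp)\cdot p \d x$ is symmetric on $\overline{\mathrm{Ran}\,G}$, and positivity of $M$ together with the lower bound $1-\tilde\sigma \geq 1-C > 0$ on $\Omega$ gives its coercivity there; this is exactly the hypothesis of Kirsch's range identity. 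A short integration by parts identifies the adjoint: $G^* v$ is the Dirichlet trace on $\partial B$ of the solution $w$ of $-\Delta w = -\nabla\cdot(\chi_U v)$ in $B$ with homogeneous Neumann data. Crucially, since the factor $1-\tilde\sigma$ in $S$ is supported in $\overline\Omega$, the virtual sources generating $\mathrm{Ran}(M^{1/2})$ are supported in $\overline\Omega$, so the range identity reads $\mathrm{Ran}(M^{1/2}) = \{\,\gamma w : -\Delta w = -\nabla\cdot(\chi_{\overline\Omega} v),\ \partial_\nu w|_{\partial B} = 0,\ v \in (L^2(\overline\Omega))^2\,\}$, a characterization in terms of $\Omega$ rather than $U$.

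Third comes the geometric equivalence $g_{z,d} \in \mathrm{Ran}(M^{1/2}) \iff z \in \Omega$. The function $g_{z,d} = d\cdot\nabla_z N(z,\cdot)$ is the Dirichlet trace of the dipole potential $x \mapsto d\cdot\nabla_z N(z,x)$, which is harmonic in $B \setminus \{z\}$ and, because $N$ is the Neumann function of $B$, has \emph{vanishing} normal derivative on $\partial B$. If $z \in \Omega$, the exterior dipole field can be reproduced in $B\setminus\overline\Omega$ by an $L^2$ source in $\overline\Omega$ (e.g.\ a layer potential on $\partial\Omega$ realized by a source in a collar inside $\Omega$), so $g_{z,d}$ lies in the range. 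If $z \notin \overline\Omega$, suppose $g_{z,d} = \gamma w$ for some admissible $w$; then $w$ and the dipole potential share both Cauchy data on $\partial B$ (Dirichlet traces equal, Neumann traces both zero), so unique continuation across the connected set $B \setminus \overline\Omega$ forces them to coincide there, contradicting the singularity of the dipole potential at $z \in B\setminus\overline\Omega$. This range characterization, and in particular the ``only if'' direction via unique continuation, is the main obstacle.

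Finally, Picard's criterion for the compact self-adjoint $M$ states that $h \in \mathrm{Ran}(M^{1/2})$ if and only if $\sum_k |\langle h, v_k\rangle|^2/|\lambda_k| < \infty$. Applying this with $h = g_{z,d}$—the normalizing factor $\|g_{z,d}\|_{L^2(\partial B)}^{-2}$ being irrelevant to convergence—and combining with the two previous steps gives $f_d(z) < \infty \iff g_{z,d} \in \mathrm{Ran}(M^{1/2}) \iff z \in \Omega$, which is the claim. Besides the unique continuation argument, the remaining delicate point is making the coercivity of $S$ on $\mathrm{Ran}\,G$ precise despite $1-\tilde\sigma$ vanishing on $U\setminus\Omega$, which is what pins the virtual sources to $\overline\Omega$ and thereby produces the $\Omega$-characterization.
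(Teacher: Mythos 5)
You should first note what you are comparing against: the paper does \emph{not} prove this theorem. It is stated ``for completeness,'' attributed to Kirsch \cite{kirsch1998characterization} (in the form given in \cite{hyvonengebauer2007}), and the paper explicitly says that all further analysis is omitted. So your proposal must stand on its own, measured against the standard literature argument. Its skeleton --- symmetric factorization, Kirsch's range identity, the dipole test for $g_{z,d}$ via Holmgren/unique continuation, and the Picard criterion --- is indeed that argument, and your steps three and four are fine in outline.

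The genuine gap is in your second step, and it is not merely ``delicate'': the claim on which it rests is false. Kirsch's range identity requires the middle operator to be (self-adjoint and) coercive on the closure of the range of the outer factor, and the paper's $S = (1-\tilde\sigma)(I+\nabla T\tilde F)$ from \eqref{eq:factorization:G} is neither. Coercivity of the form $p \mapsto \int_U (Sp)\cdot p \d x$ on $\overline{\mathrm{Ran}\,G} \subset (L^2(U))^2$ would mean $Q_{\tilde\sigma}(\mu,\mu) \geq c\,\|\nabla u_\bg^\mu\|^2_{L^2(U)}$; but from \eqref{eq:def:w} one gets $\|\nabla w^\mu\|_{L^2(B)} \leq C\,\|\nabla u_\bg^\mu\|_{L^2(\Omega)}$ and hence the two-sided bound
\begin{equation*}
	(1-C)\,\|\nabla u_\bg^\mu\|^2_{L^2(\Omega)} \;\leq\; Q_{\tilde\sigma}(\mu,\mu) \;\leq\; C'\,\|\nabla u_\bg^\mu\|^2_{L^2(\Omega)},
\end{equation*}
i.e.\ $Q_{\tilde\sigma}$ only controls the gradient \emph{on $\Omega$}. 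Since $U$ must be an open set strictly containing $\overline\Omega$, take for instance $\Omega \subset B_\rho$, $B_{\rho+\epsilon} \subset U$, and $u_\bg^{\mu_n} = \Re(\xi^n)/n$: then $\|\nabla u_\bg^{\mu_n}\|^2_{L^2(U)}/\|\nabla u_\bg^{\mu_n}\|^2_{L^2(\Omega)} \geq ((\rho+\epsilon)/\rho)^{2n} \to \infty$, so $Q_{\tilde\sigma}(\mu_n,\mu_n)/\|G\mu_n\|^2_{(L^2(U))^2} \to 0$ and coercivity fails. Consequently your assertion that ``the virtual sources generating $\mathrm{Ran}(M^{1/2})$ are supported in $\overline\Omega$'' does not follow from the paper's $U$-based factorization; that assertion \emph{is} the hard content of the theorem, and in your write-up it is assumed rather than proven. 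The way the cited literature actually closes this gap (Brühl \cite{bruhl2001factorization}, Kirsch \cite{kirsch1998characterization}) is to abandon \eqref{eq:factorization:G} and build a different, symmetric factorization $M = LFL^*$ through function spaces attached to the inclusion itself (sources in $(L^2(\Omega))^2$, or a transmission operator on $\partial\Omega$), where the contrast bound \eqref{eq:positive:sigma} makes $F$ self-adjoint and coercive; the range identity then gives $\mathrm{Ran}(M^{1/2}) = \mathrm{Ran}(L)$, after which your unique-continuation and Picard steps complete the proof exactly as you describe.
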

This is applied as the following algorithm for reconstructing the \emph{inclusion} $\Omega$ from the (approximate) measurements \eqref{eq:tilde:lambda:ij}.
First, choose a reconstruction order $M < n$ and compute the singular value decomposition 
\begin{equation*}
	\mb L \mb v_k = \sigma_k \mb u_k, \qquad \mb L^T \mb u_k = \sigma_k \mb v_k, \qquad k=1,\ldots,M
\end{equation*}
of the $M \times M$ matrix $\mb L = [\tilde \lambda_{ij}]_{i,j=1}^M$ %
that depicts the relative Neumann-to-Dirichlet map $\Lambda_\sigma - \Lambda_\bg$ in a finite trigonometric basis.
The following truncation
\begin{equation*}
	\tilde f_d(z) = \frac{\sum_{k=1}^M |\mb g^T \mb v_k|^2/|\sigma_k|}{\sum_{k=1}^M|\mb g^T \mb v_k|^2},
\end{equation*} 
where $\mb g = [g_1,\ldots,g_M]$ is a vector of Fourier coefficients of $g_{z,d}$ in the same basis, is used to ``approximate'' \eqref{eq:picard:criterion}.
The reconstruction is given by $\tilde \Omega = \{ z \in B \,:\, {\rm Ind}(z) < C_\infty \}$ 
for some cut-off value $C_\infty > 0$ of the function %
\begin{equation}
\label{eq:infinity:indicator}
	{\rm Ind}(z) = \sum_{k=1}^{N_d} \tilde f_{d_k}(z)/N_d
\end{equation} 
where some (odd) number $N_d$ of equispaced points $d_k$, $k=1,\ldots,N_d$ are used as the dipole directions to reduce artefacts (cf. \cite{lechleiter2008factorization}).
In practice, ${\rm Ind}$ is sampled on some finite grid (and with different values of $C_\infty$) to produce an image of $\Omega$.

\subsection{Numerical examples}\label{sec:numerical:examples}

In the following examples, the domain $D$ is a polygon, which contains an inhomogeneity defined by
\[
	\sigma(x) =
	\begin{cases}
		\kappa_i & \mbox{ if }x \in \Omega_i \\
		1 & \mbox{ otherwise},
	\end{cases}
\]
where $0 < \kappa_i < 1$ and the simply connected inclusions $\Omega_j \csubset D$, $i=1,\ldots,N_{\rm inc}$ are strictly separated, whence \eqref{eq:feasible:sigma} and \eqref{eq:positive:sigma} are satisfied.

Approximate bisweep data are computed numerically from \eqref{eq:u1:Neumann:function} and \eqref{eq:factorization:A}, where the operators $A$ and $P$ have been discretized using the finite element method (FEM).
In more detail, the simulation is carried out as follows:

\begin{enumerate}
\item 
Construct a triangular finite element mesh (e.g., Figure~\ref{fig:mesh}) with nodes $z_1,\ldots,z_N \in \overline D$, and a corresponding piecewise linear basis $\{ \varphi_j \}_{j=1}^N \subset \cont(\overline D)$
such that each basis function $\varphi_j$ is supported on the triangles that are adjacent to mesh node $z_j$. Define
\[
	I_\Omega = \{ j \,:\, \supp(\varphi_j) \cap \Omega \neq \emptyset \} \subset \{ 1,\ldots,N \},
\]
and $U = \cup_{j \in I_\Omega} \supp(\varphi_j)$.
Also compute the \emph{stiffness matrices}
\[
	(\mb K_\sigma)_{ij} = \int_D \sigma \nabla \varphi_i(x) \cdot \nabla \varphi_j(x) \d x,
	\quad i,j = 1,\ldots,N
\]
and $\mb K_{1-\sigma} \in \R^{N\times N}$.

\item For all $x,y \in \{ x_1,\ldots,x_n \} \subset \partial D$, compute
\[
	u_j^{x,y} = A(\delta_x - \delta_y)(z_j) = N(\Phi(z_j),\Phi(x)) - N(\Phi(z_j),\Phi(y))
\]
for each $j \in I_\Omega$ and set $u_j^{x,y} = 0$ for all $j \notin I_\Omega$. This defines a finite element approximation $\hat u^{x,y} = \sum_{i=1}^N u_j^{x,y} \varphi_j$ of $A(\delta_x - \delta_y)$. Denote $\mb u_{x,y} = [u_1^{x,y},\ldots,u_N^{x,y}]^T$.

\item 
A finite element approximation $\hat w^{x,y} = \sum_{j=1}^N w_j^{x,y} \varphi_j$ of %
$P \hat u^{x,y}$ can be solved from the linear system
\[
	\mb K_\sigma \mb w_{x,y} = \mb K_{1-\sigma} \mb u_{x,y}
\]
and $\varsigma_{ij} \approx \varsigma(x,y)$, $x = x_i$, $y = x_j$ can be computed from \eqref{eq:factorization:A} as
\[
	\varsigma_{ij} = \mb u_{x,y}^T \mb K_{1-\sigma} (\mb u_{x,y} + \mb w_{x,y}).
\]
\end{enumerate}
In the examples below, these computations were done in MATLAB and the conformal map $\Phi$ was computed using the Schwartz--Christoffel toolbox \cite{driscoll1996}. %

\begin{remark}
It is also possible to simulate the data by first computing $\tilde \sigma = \sigma \circ \Phi^{-1}$, then computing $\varsigma_{\tilde \sigma}$ as described in, for example, \cite{sweep} and finally mapping the result back to $D$ in order to obtain $\varsigma_\sigma$. This approach, which is also the theoretical basis of the reconstruction method, is not taken in order to avoid an obvious \emph{inverse crime} \cite{colton1998inverse}.
In the chosen alternative procedure, the numerical conformal map $\Phi$ that is used for reconstruction is employed only in the simulation of $u_\bg|_U$.
\end{remark}

In the following numerical examples, the domain $D$ is a non-convex polygon depicted in Figures \ref{fig:noiseless} and \ref{fig:noisy}. There are two inclusions, a disk and a rectangle, which have the conductivity $\kappa_1 = \kappa_2 = \frac12$. A conformal map $\Phi : D \to B$ is constructed using the Schwartz-Christoffel toolbox and $n$ point electrodes are positioned as $x_j = \Phi^{-1}(\tilde x_j)$, where $\tilde x_j$ are equispaced on $\partial B$. We use $N_d = 15$ dipole directions for \eqref{eq:infinity:indicator} in both examples.

\begin{example}
In the first simulation, a high number $n = 128$ of point electrodes is used.
The positions of the inclusions and the electrodes are shown in Figure~\ref{fig:noiseless:reconstruction}. The unit disk
phantom $\tilde \sigma = \sigma \circ \Phi^{-1} \in L^\infty(B)$ and $\tilde x_j$ are depicted in Figure~\ref{fig:pullback}.
Discrete bisweep data $\varsigma_{ij} = \varsigma_\sigma(x_i,x_j)$, $i,j = 1,\ldots,n$ (shown in Figure~\ref{fig:bisweep}), are computed as described above. 

No artificial noise is added to the simulation and the reconstruction is conducted with a high order of $M = 64$ singular values. The result is also shown in Figure~\ref{fig:noiseless:reconstruction}, where the different shades of gray correspond to different cut-off levels $C_\infty$ (logarithmic spacing).
This suggests that in an ideal setting, the method works as desired and the supports of the inclusions are recovered well.
\end{example}

\begin{example}
In the second example, the number of electrodes is reduced to $n=16$ and artificial errors are generated as follows: The vertices of the domain are perturbed slightly, which yields a new domain $D'$. Perturbed electrode positions $x_j' \in \partial D'$ are computed by selecting the closest point to $x_j$ on $\partial D'$ and adding a small perturbation. Bisweep data $\varsigma_{ij}' = \varsigma_{\sigma'}(x_i',x_j')$ are computed and extra noise (normally distributed with standard deviation of $\frac2{100} \max |\varsigma_{ij}'|$) is added to simulate measurement error. The conductivity $\sigma' \in L^\infty(D')$ is defined so that $\sigma' = \sigma$ in $D' \cap D$ and $\sigma' = 1$ in $D' \setminus D$.

Reconstruction is then carried out as above but in the incorrect (or ideal) domain $D$ and for the incorrect electrode positions $x_j$, as if $\varsigma_{ij}' = \varsigma_{ij}$.
This simulates the effect of slight misplacement of electrodes and error in modeling the domain (while, however, still having true relative data). A lower order of $M = 12$ singular values is used.

Figures~\ref{fig:noisy:1}--\ref{fig:noisy:last} show the reconstructions from five
different noisy samples.
Geometry error and electrode misplacement are illustrated as described under Figure~\ref{fig:noisy:1}.
The effect of these errors, and the extra noise level, in data is visualized in Figure~\ref{fig:noisy:sweep}, which matches the noisy data corresponding to 
Figure~\ref{fig:noisy:1}.
This is done with the aid of \emph{sweep data} \cite{sweep}, a restriction of bisweep data where one variable is fixed.

In Figure~\ref{fig:noisy:sweep}, the dashed line is the sweep data $\varsigma_{\sigma}(x_1,\Phi_{D}^{-1}(e^{i\theta}))$, $\theta \in [0,2\pi)$ of the ideal domain $D$, and it corresponds to the data on a certain vertical (or horizontal) line approximately in the middle of Figure~\ref{fig:bisweep}. The solid line is $\varsigma_{\sigma'}(x_1',\Phi_{D'}^{-1}(e^{i\theta}))$ and it illustrates the effect of  the geometry error. On the solid line, the values of $\theta$ corresponding to the actual electrode positions $x_j'$ are marked and the matching values of $\varsigma_{\sigma'}$ are the discrete data without extra noise. In addition, the discrete noisy data $\varsigma'_{1j}$ are plotted at values of $\theta$ corresponding to the ideal electrode positions $x_j$.

Approximate inclusion locations are recovered. Also notice the packing of electrodes near the non-convex corner, which poses an obvious problem for the applicability of this method in practice.
\end{example}

\section{Discussion and concluding remarks}

The concepts of point electrodes \cite{pem} and, in particular, bisweep data \cite{bisweep}, were generalized from smooth to piecewise $\cont^{1,\alpha}$ smooth plane domains. This was achieved by introducing a generalized relative Neumann-to-Dirichlet map, whose properties were studied with the help of conformal maps, Laplace equation with measure Neumann boundary values \cite{kral1980}, and distributional Neumann problems in smooth domains \cite{bisweep}\cite{convexbss}.

New partial data results for Calderón problem were obtained in Theorem~\ref{thm:partial:calderon:w} and Corollary~\ref{corollary:L2:partial:calderon}, based on \cite{bisweep} and the full data result \cite{astalapaivarinta2006}.
Instead of assuming a priori (interior) smoothness from the conductivity $\sigma$ (as in \cite{imanuvilov2010}), boundary homogeneity \eqref{eq:sigma:boundary:homogeneity} is assumed.
In some applications, %
\eqref{eq:sigma:boundary:homogeneity} may be well-founded. For instance, if homogeneous medium is screened for hidden defects, it might be reasonable to assume boundary homogeneity, but \emph{a priori} assumption on the smoothness of the defects (such as cracks or air bubbles in concrete) could be unrealistic.

The regularity assumptions for the considered plane domains $D$ is due to Theorem~\ref{thm:pull:back:var}. The proof technique based on Theorem~\ref{thm:conformal:sharp:angle:intermediary} and weak solutions $u$ in $W^{1,1}$ appears to fail in less regular domains. The other theorems seem to remain valid in general Lipschitz domains if the weak conductivity equation is defined appropriately.
In this paper, only isotropic (i.e., scalar) conductivities were studied, but there seems to be no reason why the results presented here would not have useful counterparts in the anisotropic case too. %

It was also demonstrated how point-electrode-based methods could be used in numerical EIT, and the notion of bisweep data enables applying unit-disk based algorithms in piecewise smooth (polygonal) domains. In this paper, only the factorization method was considered, but the same approach could be applied to other methods as well. It should be noted that there are also other means of applying the factorization method in non-smooth domains, for example, \cite{lechleiter2008factorization}.

The applicability of these point-electrode-based numerical methods depends on the availability of \emph{relative data} \cite{pem},
or very accurate knowledge of the background map $\Lambda_\bg$, both of which are questionable assumptions in practice. %
However, theoretically less-studied concepts such as \emph{frequency-difference measurements} \cite{harrach09freq} 
could be used to overcome the problem. This involves analysis of the conductivity equation with complex $\sigma$ and such counterparts of the results presented in this paper are left for future studies.

\subsection*{Acknowledgements} I would like to thank Nuutti Hyvönen for his help in proofreading this paper and suggesting improvements. I am also grateful to Tri Quach and Juha Kinnunen for useful discussions.
In addition, I would like to thank \hbox{Gunther} Uhlmann for valuable feedback.

\clearpage

\def\defaultFF{0.45\textwidth} %
\def\defaultFFF{0.28\textwidth} %
\def\subcaptionwidthFF{0.65\textwidth} %
\def\subfigvsep{0em} %
\def\fullFF{0.5\textwidth} %
\def\fullFFF{0.3333\textwidth} %

\newcommand{\incsubfigFF}[3][\defaultFF]{%
\makebox[0.5\textwidth]{\subfloat[{#3}]{%
\makebox[\subcaptionwidthFF]{\includegraphics[width=#1]{#2}}}}%
}

\newcommand{\incsubfigFFF}[3][\defaultFFF]{%
\makebox[0.3333\textwidth]{\subfloat[{#3}]{%
\includegraphics[width=#1]{#2}}}%
}

\begin{figure} 
\centering
\vspace*{2cm}%
\incsubfigFF{non-convex-2-mesh}{%
\label{fig:mesh}
	The finite element mesh used in the simulation of bisweep data.
}%
\incsubfigFF{non-convex-2-bisweep-bw}{%
\label{fig:bisweep}
	Discrete bisweep data $\varsigma_{ij}$, $i,j = 1,\ldots,128$.
}%
\\[2em]%
\incsubfigFF{non-convex-2-pullback}{%
\label{fig:pullback}
	Unit disk phantom $\tilde \sigma = \sigma \circ \Phi^{-1}$ and electrodes $\tilde x_1,\ldots,\tilde x_{128}$.
}%
\incsubfigFF{non-convex-2-fmethod}{%
\label{fig:noiseless:reconstruction}
	Reconstruction.
	The actual boundaries of the inclusions are marked with solid lines.
	The different shades of gray correspond to different $C_\infty$.
	Electrodes $x_1,\ldots,x_{128}$ are marked with dots.
}%
\caption{
\label{fig:noiseless}
	Simulation of and reconstruction from noiseless discrete bisweep data.
}
\end{figure}

\begin{figure}
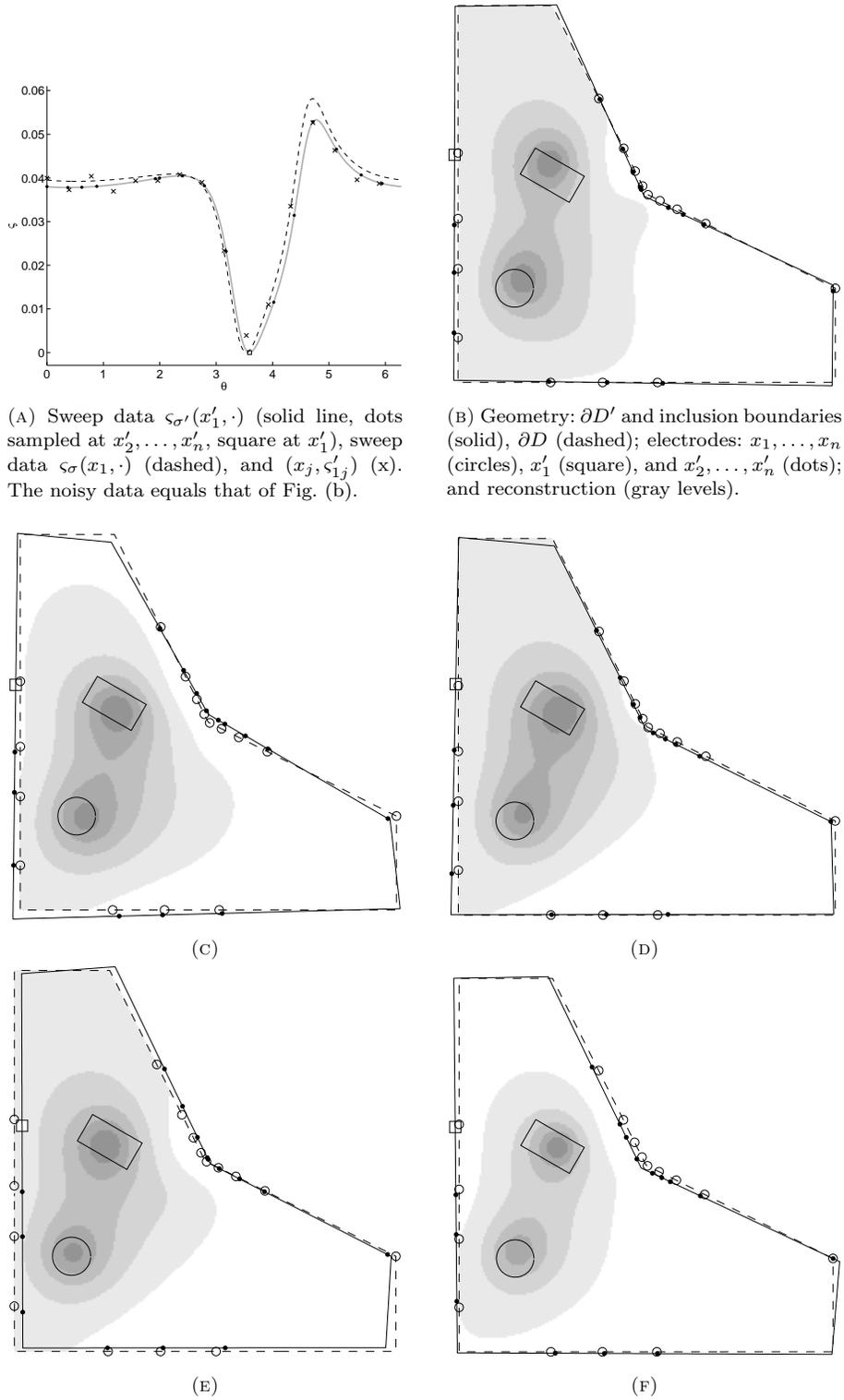

\centering
\incsubfigFF{non-convex-3-1-sweep}{%
	Sweep data $\varsigma_{\sigma'}(x_1',\cdot)$ (solid line, dots sampled at $x_2',\ldots,x_{n}'$, square at $x_1'$), sweep data $\varsigma_{\sigma}(x_1,\cdot)$ (dashed), and $(x_j,\varsigma_{1j}')$ (x). The noisy data equals that of Fig. \protect\subref{fig:noisy:1}.
	\label{fig:noisy:sweep}
}%
\incsubfigFF{non-convex-3-1-fmethod}{%
	Geometry: $\partial D'$ and inclusion boundaries (solid), $\partial D$ (dashed);
	electrodes: $x_1,\ldots,x_{n}$ (circles), $x_1'$ (square), and $x_2',\ldots,x_{n}'$ (dots);
	and reconstruction (gray levels).
	\label{fig:noisy:1}
}%
\\[\subfigvsep]%
\incsubfigFF{non-convex-3-3-fmethod}{}%
\incsubfigFF{non-convex-3-4-fmethod}{}%
\\[-1em]%
\incsubfigFF{non-convex-3-5-fmethod}{}%
\incsubfigFF{non-convex-3-6-fmethod}{\label{fig:noisy:last}}%
\caption{
\label{fig:noisy}
	\protect\subref{fig:noisy:1}--\protect\subref{fig:noisy:last}: Reconstructions from noisy data. 
	The different shades of gray correspond to different $C_\infty$ (log. spacing).
}%
\end{figure}

\clearpage

\appendix
\section{Conformal maps in piecewise smooth domains}\label{appendix:conformal:maps}

Theorem \ref{thm:conformal:sharp:angle:intermediary} shows how any piecewise $\cont^{1,\alpha}$ smooth plane domain can be conformally mapped to the unit disk $B$ through an intermediate (piecewise smooth) domain $D'$ so that the maps $D' \to D$ and $D' \to B$ are $\cont^{1,\beta}$ smooth. This is based on how piecewise smooth domains transform under fundamental conformal maps $z \mapsto z^\alpha$, as stated by Lemma~\ref{lemma:piecewise:smoothness:preserved}. Lemma~\ref{lemma:conformal:substitution} justifies the use of conformal transplantation with weak Neumann problems.

\begin{lemma}\label{lemma:piecewise:smoothness:preserved}
Let $\alpha \in (0,2)$ and $D$ be a piecewise $\cont^{1,\beta}$ smooth plane domain such that $0 \in \partial D$ and the mapping $z \mapsto z^\alpha$ (with some branch cut) is continuous and injective in $\overline D$.
The mapped domain $D^\alpha$ is piecewise $\cont^{1,\beta'}$ smooth for $\beta' = \min(\beta,\beta/\alpha) > 0$. Moreover, if $D$ has an internal angle of $\gamma$ at $0$ and $\alpha = \frac{\pi}\gamma$, then $D^\alpha$ is $\cont^{1,\beta'}$ smooth near the origin.  
\end{lemma}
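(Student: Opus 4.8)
The plan is to reduce everything to a purely local analysis at the origin, since $z \mapsto z^\alpha$ is, away from $0$, a conformal (hence real-analytic) diffeomorphism with non-vanishing derivative, and such a map carries a $\cont^{1,\beta}$ arc to a $\cont^{1,\beta}$ arc while scaling the interior angle at each vertex by the factor $\alpha$. First I would cover $\partial D$ by the finitely many smooth arcs and observe that the only arcs (and the only vertex) whose image regularity is not already obvious are those abutting the origin; for every other piece the claimed exponent $\beta' \le \beta$ is immediate.

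For an arc $\gamma \in \cont^{1,\beta}$ with $\gamma(0) = 0$ and $\gamma'(0) \neq 0$, I would write $\gamma(t) = t\, g(t)$, where $g(t) = \gamma(t)/t = \int_0^1 \gamma'(\tau t)\,\d\tau$, so that $g \in \cont^{0,\beta}$, $g(0) = \gamma'(0) \neq 0$, and $g$ is bounded away from $0$ near $t=0$. The image curve is $w(t) = \gamma(t)^\alpha = t^\alpha\, g(t)^\alpha$, which I would reparametrize by $s = t^\alpha$. A direct computation gives, for $s > 0$,
\[
	\frac{\d w}{\d s} = \Big(\tfrac{\gamma(t)}{t}\Big)^{\alpha-1}\gamma'(t) = g(t)^{\alpha-1}\gamma'(t), \qquad t = s^{1/\alpha},
\]
whose value extends continuously to $s = 0$ as $g(0)^{\alpha-1}\gamma'(0) = (\gamma'(0))^\alpha \neq 0$; thus the image is a regular curve with a well-defined non-vanishing tangent at the origin. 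The factor $g(t)^{\alpha-1}\gamma'(t)$ is $\cont^{0,\beta}$ in $t$ (a product of $\cont^{0,\beta}$ functions, $g$ being bounded away from $0$), and composing with the substitution $t = s^{1/\alpha}$ shows that $\d w/\d s \in \cont^{0,\beta/\alpha}$ in $s$ when $\alpha > 1$ and $\d w/\d s \in \cont^{0,\beta}$ when $\alpha \le 1$. This is exactly where the exponent $\beta' = \min(\beta,\beta/\alpha)$ is produced, and it is clearly positive.

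To obtain the global statement I would combine the near-origin exponent $\beta/\alpha$ with the exponent $\beta$ valid on the complement, giving $\cont^{1,\beta'}$ smoothness of each image arc; that no cusps are created (so $D^\alpha$ remains Lipschitz) follows from the angle-scaling property together with the injectivity hypothesis on $z \mapsto z^\alpha$ in $\overline D$, which confines the arguments of points of $D$ to an interval of length less than $2\pi/\alpha$ and hence keeps every mapped interior angle in $(0,2\pi)$. For the final assertion I would use that the interior angle $\gamma$ at $0$ is multiplied by $\alpha$: when $\alpha = \pi/\gamma$ the image angle equals $\alpha\gamma = \pi$, so the tangent directions $(\gamma_1'(0))^\alpha$ and $(\gamma_2'(0))^\alpha$ of the two abutting image arcs (computed above) are antiparallel and the arcs fuse into a single arc through the origin. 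Applying the local estimate to each side and checking that the one-sided tangents agree collinearly, with the Hölder modulus of the tangent controlled symmetrically across $s = 0$, yields genuine two-sided $\cont^{1,\beta'}$ smoothness near the origin.

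I expect the main obstacle to be the origin estimate itself: extracting, from $\gamma(t) = \gamma'(0)t + O(t^{1+\beta})$, the sharp Hölder exponent $\beta/\alpha$ for $\d w/\d s$ uniformly, and — in the case $\alpha\gamma = \pi$ — verifying that the gluing of the two half-arcs at the straightened vertex really produces a $\cont^{1,\beta'}$ curve rather than merely a Lipschitz one, which requires simultaneously matching the tangent directions and the Hölder bounds from both sides.
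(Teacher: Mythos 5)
Your proof is, in substance, exactly the argument the paper has in mind --- the paper's entire justification of this lemma is the remark that it ``can be proven easily using a natural parametrization of the boundary'' --- and your core computation is correct: writing $\gamma(t)=t\,g(t)$ with $g=\int_0^1\gamma'(\tau t)\d\tau\in\cont^{0,\beta}$ bounded away from zero, reparametrizing the image by $s=t^\alpha$, and obtaining $\d w/\d s=g(t)^{\alpha-1}\gamma'(t)$ with $t=s^{1/\alpha}$ gives both the nonvanishing limit tangent $(\gamma'(0))^\alpha$ and the exponent $\beta'=\min(\beta,\beta/\alpha)$, since $s\mapsto s^{1/\alpha}$ is Lipschitz on bounded intervals for $\alpha\le1$ and $\cont^{0,1/\alpha}$ for $\alpha>1$. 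Your gluing plan at a straightened vertex ($\alpha\gamma=\pi$) also works, and closes cleanly if you take the two arcs in arc-length parametrization: then the one-sided derivatives at $0$ are antipodal unit vectors, so after reversing one orientation they agree in magnitude as well as direction, and the two one-sided $\cont^{0,\beta'}$ bounds on the tangent concatenate to a two-sided one.

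Two corrections, one cosmetic and one substantive. Cosmetically: away from the origin $z\mapsto z^\alpha$ is conformal, so it \emph{preserves} (does not scale by $\alpha$) the interior angles at the other vertices; only the angle at $0$ is multiplied by $\alpha$. This only helps you. Substantively: injectivity of $z\mapsto z^\alpha$ on $\overline D$ forces the angle $\gamma_0$ at the origin to satisfy $\alpha\gamma_0\le2\pi$, but \emph{not} the strict inequality your Lipschitz argument asserts. The equality case is genuinely compatible with all hypotheses: take $D$ bounded near $0$ by the segment $[0,\epsilon]$ and the $\cont^{1,1/2}$ arc $r\mapsto re^{i(\gamma_0-cr^{1/2})}$ with $\gamma_0=2\pi/\alpha$ (capped off away from the origin); the points of $\overline D$ at radius $r$ then span an argument interval of length strictly less than $2\pi/\alpha$, so $z\mapsto z^\alpha$ is injective on $\overline D$, yet the image has interior angle exactly $2\pi$ at $0$ --- an outward cusp --- so $D^\alpha$ is not Lipschitz and the conclusion fails. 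This is as much a flaw in the lemma's formulation as in your proof, and it never arises in the paper's application (Theorem~\ref{thm:conformal:sharp:angle:intermediary} only produces image angles $\gamma/2<\pi$ or exactly $\pi$), but a complete proof must either add the hypothesis $\alpha\gamma_0<2\pi$ or note explicitly that the borderline case is excluded.
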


The above can be proven easily using a natural parametrization of the boundary $\partial D$.

\begin{theorem}\label{thm:conformal:sharp:angle:intermediary}
Let $D$ be a piecewise $\cont^{1,\alpha}$ smooth plane domain and $\Phi : D \to B$ a conformal map.
There exists a piecewise $\cont^{1,\alpha}$ smooth plane domain $D'$ and conformal maps $\Phi_1 : D' \to B$, $\Phi_2 : D' \to D$ such that $\Phi_1 \in \cont^{1,\alpha'}(\overline D')$ for some $\alpha' > 0$, $\Phi_2 \in \cont^\infty(\overline D')$ and $\Phi = \Phi_1 \circ \Phi_2^{-1}$.
\end{theorem}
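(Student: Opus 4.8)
The plan is to remove the corners of $D$ one at a time by fractional power maps, arranging that the \emph{forward} maps $D' \to D$ compose into a single polynomial (hence automatically belong to $\cont^\infty(\overline{D'})$), while the resulting intermediate domain $D'$ retains only convex corners, so that its conformal map onto the smooth disk is $\cont^{1,\alpha'}$. Concretely, let $w_1,\ldots,w_r \in \partial D$ be the vertices whose interior angle satisfies $\gamma_j > \pi$; the convex vertices (angle $\le \pi$) and the smooth boundary points need no treatment, and since $D$ is Lipschitz each $\gamma_j \in (\pi,2\pi)$.

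First I would straighten the reflex corners successively. Near $w_1$ I choose a branch of $z \mapsto (z-w_1)^{1/2}$ whose cut lies in the complement of $\overline D$ (possible because $D$ is simply connected and lies locally on one side of $\partial D$); by Lemma~\ref{lemma:piecewise:smoothness:preserved} with $\alpha = 1/2 \in (0,2)$ the image $D_1$ is again piecewise $\cont^{1,\alpha_1}$ smooth, the treated corner now has interior angle $\gamma_1/2 \in (\pi/2,\pi)$, and every \emph{other} vertex is mapped by a local biholomorphism (the branch point sits only at $w_1$), so its angle and its $\cont^1$-regularity are preserved. Repeating this for $w_2,\ldots,w_r$ in turn produces a piecewise $\cont^{1,\alpha'}$ smooth domain $D' := D_r$ all of whose interior angles are $\le \pi$; the reduction of the Hölder exponent at each step is controlled by Lemma~\ref{lemma:piecewise:smoothness:preserved} (and is in fact lossless for $\alpha = 1/2$, since then $\min(\beta,\beta/\alpha)=\beta$), so $\alpha' > 0$. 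Writing $g_k$ for the $k$-th square-root map, we have $\Phi_2^{-1} = g_r \circ \cdots \circ g_1 : D \to D'$.

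The key observation is that $\Phi_2 = g_1^{-1} \circ \cdots \circ g_r^{-1}$, and each $g_k^{-1}$ is the quadratic polynomial $\zeta \mapsto \zeta^2 + c_k$; hence $\Phi_2$ is a polynomial, an entire function, and so its restriction to $\overline{D'}$ lies in $\cont^\infty(\overline{D'})$ irrespective of the roughness of $\partial D'$. Moreover $\Phi_2$ is injective on $D'$ (being the inverse of the injective $\Phi_2^{-1}$) with nonvanishing derivative on the open set $D'$, hence a genuine conformal map $D' \to D$; its critical points lie only at the boundary vertices, where the quadratic behaviour $\zeta \mapsto \zeta^2$ is precisely what reopens each angle $\gamma_j/2$ back to $\gamma_j$. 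Setting $\Phi_1 := \Phi \circ \Phi_2 : D' \to B$ then gives $\Phi = \Phi_1 \circ \Phi_2^{-1}$, as required.

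It remains to verify $\Phi_1 \in \cont^{1,\alpha'}(\overline{D'})$, and this is the main obstacle. Here I would \emph{not} argue through the merely Hölder map $\Phi$, but instead use that $\Phi_1$ is a conformal map from the piecewise $\cont^{1,\alpha'}$ domain $D'$, all of whose interior angles are at most $\pi$, onto the $\cont^\infty$ disk $B$. On the open arcs the Kellogg--Warschawski theorem yields $\cont^{1,\alpha'}$ regularity up to the boundary, while at each convex vertex of angle $\gamma' \le \pi$ the map behaves like $z^{\pi/\gamma'}$ with exponent $\pi/\gamma' \ge 1$; the corresponding corner expansions (cf. \cite[Ch.~3]{pommerenke1992}) show that the derivative extends Hölder-continuously, with a possibly smaller exponent that we again absorb into $\alpha'$. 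Combining the arc and vertex estimates gives $\Phi_1 \in \cont^{1,\alpha'}(\overline{D'})$. The delicate points are thus the consistent placement of branch cuts keeping each square-root map injective on the closure, and the sharp corner asymptotics for $\Phi_1$; the smoothness of $\Phi_2$, by contrast, comes for free once it is recognised as a polynomial.
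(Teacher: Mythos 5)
Your proposal is essentially correct and shares the paper's overall skeleton (kill the reflex corners with square roots so that the backward map $\Phi_2$ is an explicit entire/rational function, then argue regularity of the map to the disk), but it differs from the paper's proof at two execution points, and both are worth spelling out. First, the paper does \emph{not} apply $(z-w_j)^{1/2}$ directly: it first applies the M\"obius inversion $z \mapsto 1/(z-x')$, where $x'$ is an exterior point visible from the corner along a segment. This is not decoration --- it guarantees the existence of a \emph{straight} ray from the (mapped) corner to infinity avoiding the closed domain, which is what a branch cut in the usual sense requires; for a non-convex domain (think of a C-shaped region with the corner on the inner boundary) no straight ray from the corner avoids $\overline D$. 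Your alternative, allowing a curved cut, does work: since $D$ is a Lipschitz Jordan domain, the corner is accessible from the connected exterior (local cone plus connectedness of $\hat\C \setminus \overline D$), the complement of the resulting arc-to-infinity is simply connected, and a holomorphic branch of $(z-w_j)^{1/2}$ exists there and is automatically injective. But then you are using Lemma~\ref{lemma:piecewise:smoothness:preserved} with a curved ``branch cut'', which is a mild extension of its statement; the paper's M\"obius step exists precisely to stay within the straight-cut setting. Your observation that without the inversions $\Phi_2$ becomes a genuine polynomial (rather than the paper's rational map with poles off $\overline{D'}$) is a nice simplification, and your injectivity/nonvanishing-derivative remarks are fine.

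Second, and this is the real soft spot: for $\Phi_1 \in \cont^{1,\alpha'}(\overline{D'})$ you invoke corner expansions at the remaining convex vertices. Pommerenke's corner theorems (\cite[\S 3.4]{pommerenke1992}) give \emph{asymptotics} of the form $f'(z)(z-z_0)^{1-\pi/\gamma'} \to c$ at a Dini-smooth corner, which is weaker than the uniform H\"older continuity of $\Phi_1'$ up to $\overline{D'}$ that you need; upgrading asymptotics to a H\"older modulus near the corner requires the finer Lehman--Wigley-type expansions, or a gluing argument combining Kellogg--Warschawski on the two arcs with the corner asymptotics. The paper sidesteps this entirely by being constructive in the forward direction too: it straightens each convex corner of $D'$ by an explicit map $z \mapsto (\phi_j(z))^{\pi/\gamma_j}$ with $\pi/\gamma_j > 1$ (elementarily $\cont^{1,\cdot}$ since the exponent exceeds $1$), arriving at a globally $\cont^{1,\alpha'}$ smooth domain, applies Kellogg--Warschawski \emph{once} on that smooth domain, and finally uses that the given $\Phi$ differs from the constructed map $D \to B$ by a M\"obius transformation of $\overline B$. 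So the paper's route needs only smooth-domain boundary regularity plus power-map calculus, whereas yours needs corner regularity theorems in a stronger form than your citation supplies; if you want to keep your route, replace the appeal to \cite[Ch.~3]{pommerenke1992} by Lehman/Wigley, or adopt the paper's convex-corner straightening for this last step.
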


\begin{proof}
The domain $D'$ can be constructed as follows: pick a corner $x \in \partial D$ with a reflex internal angle $\gamma > \pi$ (if any) and some point $x'$ outside $\overline D$ such that the line segment between $x$ and $x'$ does not intersect $D$. The conformal map $z \mapsto 1/(z-x') \in \cont^\infty(\overline D)$ yields a piecewise $\cont^{1,\alpha}$ smooth domain $D_i$ with an internal angle $\gamma$ at $\tilde x = 1/(x-x')$, and there exists an open ray from $\tilde x$ to infinity that does not intersect $\overline D_i$. Using this ray as a branch cut, one may apply the transformation $z \mapsto (z-\tilde x)^{1/2}$, which yields a domain $D_v$ that is piecewise $\cont^{1,\alpha}$ smooth and has one reflex angle less than $D$. This process may be repeated until no internal angle is reflex. The resulting domain is $D'$, and the chained (inverse) transformation $\Phi_2$ is $\cont^\infty(\overline{D'})$ by explicit construction.

A conformal map $\tilde \Phi : D' \rightarrow B$ can be constructed as a composition $\Phi_3 \circ \Phi_4 =: \Phi_3 \circ \tilde \Phi_m \circ \cdots \circ \tilde \Phi_1$, where $\tilde \Phi_j : D'_{j-1} \to D'_j$,
\[
	\Phi_j(z) = (\phi_j(z))^{\pi/\gamma_j}, \qquad
	\phi_j \in \cont^\infty(\overline{ D_{j-1}'}), \qquad
	j = 1,\ldots,m,
\]
straighten the angles $\gamma_j < \pi$ of $D' =: D_0'$. Thus, for some $\alpha' > 0$, $\Phi_4 \in \cont^{1,\alpha'}(\overline{D'})$, and the map $\Phi_3 \in \cont^{1,\alpha'}(\overline{D_m'})$ transforms the resulting $\cont^{1,\alpha'}$ smooth domain to the unit disk $B$ (cf. \cite[§§ 3.3 \& 3.4]{pommerenke1992}). 

Finally, any conformal map $\Psi : D \to B$ satisfies $\Phi = M \circ \Psi$, where $M \in \cont^\infty(\overline B)$ is a Möbius transformation, which shows that $\Phi = M \circ \tilde \Phi \circ \Phi_2^{-1} =: \Phi_1 \circ \Phi_2^{-1}$ is as claimed.
\end{proof}

\begin{lemma}\label{lemma:conformal:substitution}
Let $\tilde f, \tilde g \in H^1_{\rm loc}(\tilde D)$ be such that $\nabla \tilde f \cdot \nabla \tilde g \in L^1(\tilde D)$. Define $f = \tilde f \circ \Phi$, $g = \tilde g \circ \Phi$ and $\sigma = \tilde \sigma \circ \Phi$, where $\Phi : D \to \tilde D$ is conformal, $D$, $\tilde D$ Lipschitz, and $\tilde \sigma \in L^\infty(\tilde D)$. Then $\nabla f \cdot \nabla g \in L^1(D)$ and
\begin{equation*}
	\int_D \sigma \nabla f \cdot \nabla g \d x = \int_{\tilde D} \tilde \sigma \nabla \tilde f \cdot \nabla \tilde g \d x.
\end{equation*}
\end{lemma}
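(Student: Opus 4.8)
The plan is to reduce the statement to the classical conformal invariance of the Dirichlet form combined with the change-of-variables formula. First I would record the algebraic content of conformality. Writing $\Phi = (\Phi_1,\Phi_2)$ as a map $\R^2 \to \R^2$, the Cauchy--Riemann equations force the Jacobian matrix into the form
\[
	D\Phi = \begin{pmatrix} a & -b \\ b & a \end{pmatrix}, \qquad a = \partial_1 \Phi_1, \quad b = \partial_1 \Phi_2,
\]
so that $(D\Phi)(D\Phi)^T = (D\Phi)^T (D\Phi) = |\Phi'|^2 I$ and $\det D\Phi = |\Phi'|^2$, where $|\Phi'|^2 = a^2 + b^2$. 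On any open set compactly contained in $D$, the map $\Phi$ is a $\cont^1$ (indeed real-analytic) diffeomorphism with $|\Phi'|$ bounded above and bounded away from zero, hence locally bi-Lipschitz.

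Next I would establish the chain rule for the Sobolev composition. Since $\Phi$ is locally bi-Lipschitz on the open set $D$ and $\tilde f, \tilde g \in H^1_{\rm loc}(\tilde D)$, the compositions $f = \tilde f \circ \Phi$ and $g = \tilde g \circ \Phi$ lie in $H^1_{\rm loc}(D)$ and satisfy, almost everywhere,
\[
	\nabla f = (D\Phi)^T\,(\nabla \tilde f \circ \Phi), \qquad
	\nabla g = (D\Phi)^T\,(\nabla \tilde g \circ \Phi).
\]
This is the standard composition result for Sobolev functions under a bi-Lipschitz change of variables, which one obtains by mollifying $\tilde f$ and $\tilde g$ on a slightly larger compact set, applying the classical chain rule, and passing to the limit using the two-sided bound on $|\Phi'|$. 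Combining with the first step gives the pointwise identity
\[
	\nabla f(x) \cdot \nabla g(x)
	= (\nabla \tilde f \circ \Phi)^T (D\Phi)(D\Phi)^T (\nabla \tilde g \circ \Phi)
	= |\Phi'(x)|^2\, \big((\nabla \tilde f \cdot \nabla \tilde g) \circ \Phi\big)(x)
\]
for almost every $x \in D$.

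Finally I would apply the change-of-variables formula. Setting $h := \tilde \sigma\, \nabla \tilde f \cdot \nabla \tilde g \in L^1(\tilde D)$ and $\sigma = \tilde \sigma \circ \Phi$, the identity above gives $\sigma\, \nabla f \cdot \nabla g = (h \circ \Phi)\,|\Phi'|^2 = (h \circ \Phi)\,|\det D\Phi|$ almost everywhere, so that
\[
	\int_D \sigma\, \nabla f \cdot \nabla g \d x
	= \int_D (h \circ \Phi)\,|\det D\Phi| \d x
	= \int_{\tilde D} h \d y
	= \int_{\tilde D} \tilde \sigma\, \nabla \tilde f \cdot \nabla \tilde g \d x.
\]
Applying the same change of variables to $|\nabla f \cdot \nabla g| = |\Phi'|^2\,|(\nabla \tilde f \cdot \nabla \tilde g) \circ \Phi|$ shows $\nabla f \cdot \nabla g \in L^1(D)$, so every integral above is finite. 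The main obstacle is the rigorous justification of the Sobolev chain rule under composition with $\Phi$: the map is guaranteed to be a smooth diffeomorphism only on the \emph{open} domain, the functions are merely $H^1_{\rm loc}$, and both the chain rule and the change of variables must be carried out on an exhaustion by compact subsets before passing to the limit. The merely Lipschitz (rather than smooth) nature of $\partial D$ and $\partial \tilde D$ plays no role, since the entire argument takes place in the interior.
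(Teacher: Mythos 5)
Your proposal is correct and follows essentially the same route as the paper's proof: both exploit the conformality identity $J_\Phi J_\Phi^T = (\det J_\Phi)I$ together with the chain rule and change of variables on an exhaustion of $D$ by compactly contained sets (the paper uses $D_\epsilon = \{x \in D : \mathrm{dist}(x,\partial D) > \epsilon\}$), passing to the limit via monotone convergence for the absolute-value integrand and dominated convergence for the signed identity. The only difference is presentational: you spell out the Cauchy--Riemann structure and the mollification argument behind the Sobolev chain rule, which the paper leaves implicit.
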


\begin{proof}
Let $D_\epsilon = \{ x \in D \;:\; {\rm dist}(x,\partial D) > \epsilon \}$.
Clearly $\Phi|_{D_\epsilon} \in \cont^\infty(\overline D_\epsilon)$ for all $\epsilon > 0$ and thus %
\begin{equation}
\label{eq:conformal:substitution:proof}
\begin{split}
	\int_{D_\epsilon} |\sigma \nabla f \cdot \nabla g| \d x
	&= \int_{D_\epsilon} |(\tilde \sigma \circ \Phi) \nabla (\tilde f \circ \Phi) \cdot \nabla (\tilde g \circ \Phi)|  \d x \\
	&= \int_{D_\epsilon} |(\tilde \sigma \circ \Phi) \big(J_\Phi J_\Phi^T(\nabla \tilde f) \circ \Phi\big) \cdot \big((\nabla \tilde g) \circ \Phi\big)| \d x \\
	&= \int_{\Phi(D_\epsilon)} \left|(\det J_\Phi \circ \Phi^{-1})\tilde \sigma \nabla \tilde f \cdot \nabla \tilde g\right| \cdot |\det J_{\Phi^{-1}}| \d x \\
	&= \int_{\Phi(D_\epsilon)} |\tilde \sigma \nabla \tilde f \cdot \nabla \tilde g| \d x, 
\end{split}
\end{equation}
where $J_\Phi : D \rightarrow \R^{2\times 2}$ denotes the Jacobian of $\Phi$ (interpreted as a mapping $D \subset \R^2 \rightarrow \R^2$). Since $\Phi$ is conformal, $J_\Phi J_\Phi^T = J_\Phi^T J_\Phi = (\det J_\Phi) I = (\det J_{\Phi^{-1}} \circ \Phi)^{-1}I$.
The monotone convergence theorem yields $\| \sigma \nabla f \cdot \nabla g \|_{L^1(D)} = \| \tilde \sigma \nabla \tilde f \cdot \nabla \tilde g \|_{L^1(\tilde D)}$.
Similarly to \eqref{eq:conformal:substitution:proof}, it holds that $\int_{D_\epsilon} \sigma \nabla f \cdot \nabla g \d x = \int_{\Phi(D_\epsilon)} \tilde \sigma \nabla \tilde f \cdot \nabla \tilde g \d x$ and the claim follows from Lebesgue's dominated convergence theorem.
\end{proof}

\bibliographystyle{acm}
\bibliography{ns}{}

\begin{thebibliography}{10}

\bibitem{astalapaivarinta2006}
{\sc Astala, K., and P{\"a}iv{\"a}rinta, L.}
\newblock Calder{\'o}n's inverse conductivity problem in the plane.
\newblock {\em Ann. of Math. 163}, 1 (2006), 265--300.

\bibitem{borceareview}
{\sc Borcea, L.}
\newblock Electrical impedance tomography.
\newblock {\em Inverse problems 18\/} (2002), R99.

\bibitem{brown1997uniqueness}
{\sc Brown, R., and Uhlmann, G.}
\newblock Uniqueness in the inverse conductivity problem for nonsmooth
  conductivities in two dimensions.
\newblock {\em Comm. Partial Differential Equations 22\/} (1997), 1009--1027.

\bibitem{bruhl2001factorization}
{\sc Br{\"u}hl, M.}
\newblock Explicit characterization of inclusions in electrical impedance
  tomography.
\newblock {\em SIAM J. Math. Anal. 32\/} (2001), 1327.

\bibitem{bruhl2000numerical}
{\sc Brühl, M., and Hanke, M.}
\newblock Numerical implementation of two noniterative methods for locating
  inclusions by impedance tomography.
\newblock {\em Inverse Problems 16}, 4 (2000), 1029--1042.

\bibitem{eit99}
{\sc Cheney, M., Isaacson, D., and Newell, J.}
\newblock Electrical impedance tomography.
\newblock {\em SIAM Rev.\/} (1999), 85--101.

\bibitem{colton1998inverse}
{\sc Colton, D., and Kress, R.}
\newblock {\em Inverse acoustic and electromagnetic scattering theory.}
\newblock Springer, 1998.

\bibitem{dautraylions}
{\sc Dautray, R., and Lions, J.}
\newblock {\em Mathematical analysis and numerical methods for science and
  technology: Functional and Variational Methods}, vol.~2.
\newblock Springer-Verlag, 1988.

\bibitem{driscoll1996}
{\sc Driscoll, T.~A.}
\newblock Algorithm 756: a {MATLAB} toolbox for {Schwarz-Christoffel} mapping.
\newblock {\em ACM Trans. Math. Soft. 22\/} (1996), 168--186.

\bibitem{dunfordschwartz}
{\sc Dunford, N., and Schwartz, J.}
\newblock {\em Linear Operators, Part I}.
\newblock Interscience, 1958.

\bibitem{evans1998}
{\sc Evans, L.}
\newblock {\em Partial Differential Equations}.
\newblock American Math. Soc., 1998.

\bibitem{hyvonengebauer2007}
{\sc Gebauer, B., and Hyv{\"o}nen, N.}
\newblock Factorization method and irregular inclusions in electrical impedance
  tomography.
\newblock {\em Inverse Problems 23}, 5 (2007), 2159--2170.

\bibitem{golubov1984}
{\sc Golubov, B.~I.}
\newblock Multiple {F}ourier series and integrals.
\newblock {\em J. Math. Sci. 24\/} (1984), 639--673.
\newblock 10.1007/BF01305756.

\bibitem{grisvard}
{\sc Grisvard, P.}
\newblock {\em Elliptic Problems in Nonsmooth Domains}.
\newblock Pitman, 1985.

\bibitem{sweep}
{\sc Hakula, H., Harhanen, L., and Hyvönen, N.}
\newblock Sweep data of electrical impedance tomography.
\newblock {\em Inverse Problems 27\/} (2011), 115006.

\bibitem{hanke2003recent}
{\sc Hanke, M., and Br{\"u}hl, M.}
\newblock Recent progress in electrical impedance tomography.
\newblock {\em Inverse Problems 19}, 6 (2003), S65--S90.

\bibitem{pem}
{\sc Hanke, M., Harrach, B., and Hyv{\"o}nen, N.}
\newblock Justification of point electrode models in electrical impedance
  tomography.
\newblock {\em Math. Models Methods Appl. Sci. 21\/} (2011), 1395--1413.

\bibitem{convexbss}
{\sc Hanke, M., Hyv{\"o}nen, N., and Reusswig, S.}
\newblock Convex backscattering support in electric impedance tomography.
\newblock {\em Numer. Math.\/} (2011), 1--24.

\bibitem{harrach09freq}
{\sc Harrach, B., and Seo, J.}
\newblock Detecting inclusions in electrical impedance tomography without
  reference measurements.
\newblock {\em SIAM J. Appl. Math 69}, 6 (2009), 1662--1681.

\bibitem{Hormander73}
{\sc H\"ormander, L.}
\newblock {\em An introduction to complex analysis in several variables}.
\newblock North-Holland, Amsterdam, 1973.

\bibitem{bisweep}
{\sc Hyv{\"o}nen, N., Piiroinen, P., and Seiskari, O.}
\newblock Point measurements for a {Neumann-to-Dirichlet} map and the
  {Calderón} problem in the plane.
\newblock {\em SIAM J. Math. Anal. 44}, 5 (2012), 3526--3536.

\bibitem{multisweep}
{\sc Hyv{\"o}nen, N., and Seiskari, O.}
\newblock Detection of multiple inclusions from sweep data of electrical
  impedance tomography.
\newblock {\em Inverse Problems 28\/} (2012), 095014.

\bibitem{imanuvilov2012}
{\sc Imanuvilov, O., and Yamamoto, M.}
\newblock Inverse boundary value problem for {S}chr{\"o}dinger equation in two
  dimensions.
\newblock {\em SIAM J. Math. Anal. 44}, 3 (2012), 1333--1339.

\bibitem{imanuvilovpreprint}
{\sc Imanuvilov, O.~Y., Uhlmann, G., and Yamamoto, M.}
\newblock Inverse boundary value problem by partial data for the
  {Neumann-to-Dirichlet}-map in two dimensions.
\newblock arXiv:1210.1255.

\bibitem{imanuvilov2010}
{\sc Imanuvilov, O.~Y., Uhlmann, G., and Yamamoto, M.}
\newblock The {C}alder\'on problem with partial data in two dimensions.
\newblock {\em J. Amer. Math. Soc. 23\/} (2010), 655--691.

\bibitem{kirsch1998characterization}
{\sc Kirsch, A.}
\newblock Characterization of the shape of a scattering obstacle using the
  spectral data of the far field operator.
\newblock {\em Inverse problems 14}, 6 (1998), 1489--1512.

\bibitem{kral1980}
{\sc Kr{\'a}l, J.}
\newblock {\em Integral Operators in Potential Theory}.
\newblock Springer, 1980.

\bibitem{lechleiter2008factorization}
{\sc Lechleiter, A., Hyv{\"o}nen, N., and Hakula, H.}
\newblock The factorization method applied to the complete electrode model of
  impedance tomography.
\newblock {\em SIAM J. Appl. Math 68}, 4 (2008), 1097--1121.

\bibitem{lionsmagenes}
{\sc Lions, J., and Magenes, E.}
\newblock {\em Non-homogeneous boundary value problems and applications},
  vol.~1.
\newblock Springer-Verlag, 1973.
\newblock Translated from French by P. Kenneth.

\bibitem{mazja1985}
{\sc Maz'ja, V.}
\newblock {\em {S}obolev Spaces}.
\newblock Springer, 1985.

\bibitem{mcshane1934extension}
{\sc McShane, E.}
\newblock Extension of range of functions.
\newblock {\em Bull. Amer. Math. Soc 40}, 12 (1934), 837--842.

\bibitem{medkova1997third}
{\sc Medkov{\'a}, D.}
\newblock The third boundary value problem in potential theory for domains with
  a piecewise smooth boundary.
\newblock {\em Czech. Math. J. 47}, 4 (1997), 651--679.

\bibitem{Nachman96}
{\sc Nachman, A.}
\newblock Global uniqueness for a two-dimensional inverse boundary value
  problem.
\newblock {\em Ann. of Math. 143\/} (1996), 71--96.

\bibitem{necas}
{\sc Necas, J.}
\newblock {\em Direct methods in the theory of elliptic equations}.
\newblock Springer, 2012.

\bibitem{pommerenke1992}
{\sc Pommerenke, C.}
\newblock {\em Boundary Behaviour of Conformal Maps}.
\newblock Springer, 1992.

\bibitem{uhlmannreview}
{\sc Uhlmann, G.}
\newblock Electrical impedance tomography and {C}alder{\'o}n's problem.
\newblock {\em Inverse problems 25\/} (2009), 123011.

\end{thebibliography}

\end{document}